\numberwithin{equation}{section}
\newtheorem{theorem}{Theorem}[section]
\newtheorem{lemma}[theorem]{Lemma}
\newtheorem{lem}[theorem]{Lemma}
\newtheorem{proposition}[theorem]{Proposition}
\newtheorem{prop}[theorem]{Proposition}
\newtheorem{remark}[theorem]{Remark}
\renewcommand{\epsilon}{\varepsilon}
\def\pa{\partial}
\def\ep{\epsilon}
\def\na{\nabla}
\def\Re{\Re e}
\newcommand{\R}{\mathbb{R}}
\newcommand{\N}{\mathbb{N}}
\newcommand{\TT}{{\mathbb T}}
\def\e{\epsilon}
\def\eps{\varepsilon}
\def\cN{\mathcal{N}}
\def\beq{\begin{equation}}
\def\eeq{\end{equation}}
\def\app{\mathrm{app}}
\def\bl{\mathrm{bl}}
\def\inte{\mathrm{in}}
\newcommand{\Div}{\operatorname{div}}
\newcommand{\curl}{\operatorname{curl}}
\def\divz{\Div_z}
\def\curlz{\curl_z}
\begin{document}

\title[Vanishing viscosity and rugosity limit]{The vanishing viscosity limit for 2D Navier-Stokes \\
 in a rough domain}

\author[G\'erard-Varet, Lacave, Nguyen $\&$ Rousset]{David G\'erard-Varet, Christophe Lacave, Toan T. Nguyen and Fr\'ed\'eric Rousset}

\address[D. G\'erard-Varet]{Univ Paris Diderot, Sorbonne Paris Cit\'e, Institut de Math\'ematiques de Jussieu-Paris Rive Gauche,
UMR 7586, CNRS, Sorbonne Universit\'es, UPMC Univ Paris 06, F-75013, Paris, France.}
\email{david.gerard-varet@imj-prg.fr}

\address[C. Lacave]{Univ. Grenoble Alpes, IF, F-38000 Grenoble, France\\
CNRS, IF, F-38000 Grenoble, France.}
\email{Christophe.Lacave@univ-grenoble-alpes.fr}

\address[T. Nguyen]{Department of Mathematics, Pennsylvania State University, State College, PA 16802, USA.}
\email{nguyen@math.psu.edu}

 \address[F. Rousset]{Laboratoire de Math\'ematiques d'Orsay (UMR 8628), Universit\'e Paris-Sud et Institut Universitaire de France, 91405 Orsay Cedex, France}
 \email{frederic.rousset@math.u-psud.fr}

\begin{abstract} 
We study the high Reynolds number limit of a viscous fluid in the presence of a rough boundary. We consider the two-dimensional incompressible Navier-Stokes equations with Navier slip boundary condition, in a domain whose boundaries exhibit fast oscillations in the form $x_2 = \varepsilon^{1+\alpha} \eta(x_1/\varepsilon)$, $\alpha > 0$. Under suitable conditions on the oscillating parameter $\varepsilon$ and the viscosity $\nu$, we show that solutions of the Navier-Stokes system converge to solutions of the Euler system in the vanishing limit of both $\nu$ and $\eps$. The main issue is that the curvature of the boundary is unbounded as $\eps \rightarrow 0$, which precludes the use of standard methods to obtain the inviscid limit. Our approach is to first construct an accurate boundary layer approximation to the Euler solution in the rough domain, and then to derive stability estimates for this approximation under the Navier-Stokes evolution.
\end{abstract}

\maketitle

\section{Introduction}
Our concern in this paper is the inviscid limit of the Navier-Stokes equation in a domain with rough boundaries. We use a standard modelling of the roughness, through a small amplitude and small wavelength oscillation. Precisely, we consider the domain 
$$\Omega^\eps := \{ x = (x_1,x_2), \quad x_1 \in \TT, \: x_2 > \varepsilon^{1+\alpha} \eta(x_1/\varepsilon) \} $$
where $\eta = \eta(y_1)$ is a smooth and positive function of $y_1 \in \TT$. The boundary $\pa \Omega^\eps$ of the domain oscillates at wavelength $\eps$ and has typical amplitude $\eps^{1+\alpha}$, where $\alpha$ will be specified later. It is implicit here that $1/\eps$ is an integer, to be consistent with the periodicity of the domain in $x_1$. We consider in this rough domain a viscous fluid, governed by the incompressible Navier-Stokes equations, in the regime of high Reynolds number: ${\rm Re} = \frac1\nu \gg 1$. 
We assume that a slip boundary condition of Navier type holds at $\pa \Omega^\eps$. The system under consideration is therefore: 
\begin{equation} \label{NS}
\begin{aligned}
 u^{\nu,\varepsilon}_t + u^{\nu,\varepsilon}\cdot \nabla u^{\nu,\varepsilon} + \nabla p^{\nu,\varepsilon} - \nu \Delta u^{\nu,\varepsilon} & = f &\text{in } \Omega^\eps, \\
\Div u^{\nu,\varepsilon} & = 0 & \text{in } \Omega^\eps, \\
u^{\nu,\varepsilon} \cdot n^{\epsilon} = 0, \quad 2D(u^{\nu,\varepsilon})n^{\epsilon} \cdot \tau^{\epsilon}+ \lambda u^{\nu,\eps} \cdot \tau^\eps & = 0 & \text{ on } \pa \Omega^\eps.
\end{aligned}
\end{equation}
As usual, $u^{\nu,\eps} = u^{\nu,\eps}(t,x)$ and $p^{\nu,\eps} = p^{\nu,\eps}(t,x)$ are the velocity and pressure fields. The unit vectors $n^{\epsilon} = 
(-\eps^\alpha \eta', 1)/\langle \eps^\alpha \eta' \rangle$ and $ \tau^{\epsilon} = (1,\eps^\alpha \eta')/\langle \eps^\alpha \eta' \rangle$, with $\langle \eps^\alpha \eta' \rangle = \sqrt{1+\eps^{2 \alpha} |\eta'|^2}$, are normal and tangent to $\pa \Omega^\eps$. The first boundary condition expresses no penetration at the boundary. The second one, of a Navier type, expresses the shear stress: $ D(u) = \frac12 (\nabla u + (\nabla u)^T)$ is the deformation tensor and $\lambda$ is a scalar friction function of class $C^2$. To avoid a tedious discussion on the compatibility conditions that the initial data should satisfy, we consider that the flow is generated by a smooth forcing $f = f(t,x)$ which is zero in the past: namely, we assume that the source term $f$ in the momentum equation (\ref{NS}) satisfies 
$$f \in C^\infty(\R, H^\infty(\Omega^0)), \quad f\vert_{t < 0} = 0, $$ 
where $\Omega^0 = \TT \times \R_+$ is the {\em flat} domain. As $\inf \eta > 0$, one has $\Omega^\eps \Subset \Omega^0$ for $\eps$ small enough. 

Under such condition on $f$, there is a unique solution $u^{\nu,\eps} \in C^\infty(\R, H^\infty(\Omega^\eps))$ of \eqref{NS} satisfying 
\begin{equation} \label{uneg} 
u\vert_{t < 0} = 0. 
\end{equation} 
More precisely, as we consider the 2D Navier-Stokes equation, there is a unique global Leray solution. The smoothness of this solution is then obtained by a bootstrap argument, performing time differentiations of a sequence of smooth approximations, and taking the limit of this sequence. We refer to \cite{BoyerFabrie} in the case of classical Dirichlet conditions, and to \cite{Kelliher} in the case of Navier conditions. 

\medskip
{\em Our purpose is to understand the joint asymptotics limits $\nu \rightarrow 0, \eps \rightarrow 0$ of \eqref{NS}-\eqref{uneg}}. We want to find sufficient conditions under which the limiting behaviour is provided by the Euler system in the flat domain 
\begin{equation} \label{Euler}
\begin{aligned}
 u^{0}_t + u^{0}\cdot \nabla u^{0} + \nabla p^{0} & =f & \text{ in } \Omega^0, \\
\Div u^{0} & = 0 & \text{ in } \Omega^0, \\
u^0 \cdot n & =0 & \text{ on } \pa \Omega^0, \\
u^0\vert_{t<0} & = 0& \text{ in } \Omega^0. 
\end{aligned}
\end{equation}
There are various motivations for studying such a problem. First, due to the development of microfluidics, the nature of the interaction between a viscous fluid and a rough boundary has regained much interest. Recently, several experiments showed that rough hydrophobic surfaces may generate significant slip lengths at the boundary, resulting in drag decrease \cite{BocBar,YbBaCo,FeBaVi}. In this regard, considering a Navier boundary condition together with a rough boundary may be meaningful. Moreover, it is well-known that the development of instabilities at a high Reynolds number is often triggered by wall roughness, see for instance \cite{CabFlo,SzuFlo}. Hence, studying the combined effect of $\eps$ and $\nu$ in Navier-Stokes has great physical relevance. 
 
 Another main motivation is a better mathematical understanding of the vanishing viscosity limit in the presence of boundaries. 
 In the case without boundaries, the convergence of smooth solutions of Navier-Stokes to smooth solutions of Euler as $\nu \rightarrow 0$ has been known for long (see \cite{Swann71,Kato72,Kato86,Masmoudi07}). However, when one considers smooth boundaries with the usual no-slip condition, this problem is essentially open. As the Euler flow does not satisfy the no-slip conditions, the convergence can not hold in strong topology (say $H^1$). Hence, one can not bound the velocity gradients uniformly in $\nu$ near the boundary: this is a boundary layer phenomenon, which may preclude even the $L^2$ convergence of Navier-Stokes to Euler. Following the classical approach of Prandtl \cite{Prandtl04}, the starting idea is that the Navier-Stokes solution should admit an expansion of the form 
 $$ u^\nu(t,x) \sim u^0(t,x) + U(t,x_1,x_2/\sqrt{\nu}) + v^\nu(t,x) = u^{\app,\nu}(t,x) + v^\nu(t,x)$$
near the boundary $x_2=0$, where $u^0$ is the Euler solution, $U$ describes a boundary layer corrector with typical scale $\sqrt{\nu}$ transversally to the boundary, and $v^\nu$ is a small perturbation. However, as the boundary layer approximation satisfies $\na u^{\app,\nu} \sim \frac{1}{\sqrt{\nu}}$, it may stretch a lot the perturbation, and yields instability in very short time scales. See \cite{Grenier00CPAM} for a discussion of this phenomenon, or the recent works \cite{GGN,GMM}. 
 
 In the case of Navier conditions at smooth boundaries (system (\ref{NS}) with $\eps=1$), these instabilities are filtered out: there is still a boundary layer, but of a smaller amplitude: the formal asymptotics is rather
 $$ u^\nu(t,x) \sim u^0(t,x) + \sqrt{\nu} U(t,x_1,x_2/\sqrt{\nu}) + v^\nu(t,x) = u^{\app,\nu}(t,x) + v^\nu(t,x),$$
near the boundary $x_2=0$; see \cite{IftimieSueur} for details. One can show the convergence of the Navier-Stokes solutions to the Euler solutions in several ways: either through direct energy estimates of $u^\nu - u^0$, or by considering the equation for the vorticity $\omega = \curl u$. Indeed, the Navier condition at a smooth boundary $\pa \Omega$ can be written: $\omega\vert_{\pa \Omega} = (2\kappa+\lambda) v \cdot \tau\vert_{\pa \Omega}$, with $\kappa$ the curvature of the boundary and $v \cdot \tau$ the tangential velocity. This allows to control $\omega$ through maximum principle arguments, and to show convergence to Euler by strong compactness arguments: see \cite{MikRob} for more. This strong compactness approach has also been carried out in three dimensions \cite{MasRou1} and for other type of boundary conditions
involving normal derivatives of the velocity, in particular for free surface fluids \cite{MasRou2}, by propagating
higher conormal regularity. 

Interestingly, the methods just mentioned fail for the case of rough boundaries when $\eps \rightarrow 0$ and $\alpha < 1$: for instance, the curvature of the boundary is now 
$$\kappa = \frac{\eps^{\alpha-1} \eta''}{\langle \eps^\alpha \eta' \rangle^3}. $$ 
It is therefore unbounded as $\eps \rightarrow 0$, and so is the vorticity $\omega^{\nu,\eps}$. Another way to emphasize this is to consider the Euler solution in the rough domain $\Omega^\eps$, that is $u^{0,\eps}$. As will be shown later, it admits an expansion in the form 
$$ u^{0,\eps}(t,x) \sim u^0(t,x) + \eps^\alpha U(t,x_1,x/\eps) $$
where $U$ describes now an inviscid boundary layer corrector generated by the roughness. When $\alpha$ is very close to zero, or zero, we see that we get closer in spirit to a Prandtl expansion, with possible instabilities. This is a strong mathematical motivation for our study. 

A last indication of the difficulty of this convergence problem is that for $\alpha = 0$, the limits $\nu \rightarrow 0$ and $\eps \rightarrow 0$ do not commute. First, if $\nu \rightarrow 0$ at a fixed $\eps$, one recovers the Euler equation in the rough domain $\Omega^\eps$, {\it cf} the previous discussion. Then, if $\eps \rightarrow 0$, one recovers the Euler equation in the flat domain: this is a consequence of general continuity results established by the first and second authors in \cite{GVLacave13,GVLacave15}: the Euler solution is continuous with respect to its domain, in the sense that $\gamma$-convergence of the domain implies convergence of the solution in $L^2$ topology. Let us mention that $\alpha > - 1$ is enough in this step. 
On the other hand, if one considers first the limit $\eps \rightarrow 0$ at a fixed $\nu$, if $\eta \neq 0$, the limit system is the Navier-Stokes equation in $\Omega^0$, {\em with a no-slip condition at $\pa \Omega^0$}. This surprising change in the boundary condition is due to strong dissipation near the rough boundary, and shown in \cite{CFS}, see also \cite{BFNW,BonBuc,DalibardGV}. Then, if one tries to send $\nu$ to zero, one faces the usual problem associated with Dirichlet conditions. This emphasizes that considering a joint asymptotics in $\eps, \nu$ is relevant. 

\medskip
Due to all above observations, our main goal is to exhibit some asymptotic regimes in $(\eps,\nu)$ where we can obtain convergence to the Euler flows. More precisely, we will prove convergence as soon as $\alpha > 0$ and if $\nu$ is sufficiently small compared to $\epsilon$. We will assume that 
\begin{equation} \label{constraint_alpha}
 \alpha=\frac{1}{N_0}, \quad \text{ with $N_0$ an arbitrarily large integer number.}
\end{equation}
 In particular, $\alpha < 1$. Note that for $\alpha \ge 1$, the curvature of the boundary is bounded, and the convergence can be deduced from the compactness argument of \cite{MikRob}, and no new argument is needed. For $\alpha < 1$, we proceed in two steps to prove convergence. 
 
 \medskip
First, we construct a good approximate solution of the Euler equations in $\Omega^\eps$, involving boundary layers due to the roughness. 
\begin{proposition}\label{prop-Euler} Let $f \in C^\infty(\R, H^\infty(\Omega^0))$ with $f\vert_{t <0} = 0$, and $u^0 \in C^\infty(\R, H^\infty(\Omega^0))$ the solution of the Euler equations \eqref{Euler}. For any $M > 0$, there exist approximate velocity and pressure 
$$
u^\app = u^0 + u^\app_\inte + u^\app_\bl, \qquad 
p^\app = p^0 + p^\app_\inte + p^\app_\bl 
$$
satisfying 
\begin{subequations}
\begin{align}
 \partial_t u^\app + ( u^\app \cdot \nabla ) u^\app + \nabla p^\app & = f + R_\inte^\app + R_\bl^\app & \text{in } \Omega^\varepsilon, \label{Eav-eqs} \\
\Div u^\app &= 0 & \text{in } \Omega^\varepsilon, \label{Eadiv-eqs} \\
u^\app \cdot n^\e &= 0 & \text{on } \pa \Omega^\epsilon \label{Eabdry-eqs}, \\
u^\app\vert_{t < 0} & = 0 &\text{in } \Omega^\varepsilon,
\end{align}
\end{subequations}
and the following estimates, for any $T_0 > 0$, $\gamma \in (0,1)$, $\beta \in \mathbb{Z}^2$: 
\begin{equation} \label{all_bounds}
\left\{
\begin{aligned} 
& \sup_{t\in [0,T_0]} \| \pa^\beta u^\app_\inte \|_{L^\infty(\Omega^\eps)} + \| \pa^\beta u^\app_\inte \|_{L^2(\Omega^\eps)} \lesssim \epsilon^{\alpha+1} ,\quad 
\sup_{t\in [0,T_0]} \| e^{\gamma x_2/\eps} \eps^{|\beta|} \partial^\beta u_\bl^\app \|_{L^\infty(\Omega^\eps)} \lesssim \epsilon^{\alpha}, \\
& \sup_{t\in [0,T_0]} \| e^{\gamma x_2/\eps} \eps^{|\beta|} \partial^\beta u_\bl^\app \|_{L^2(\Omega^\eps)} \lesssim \epsilon^{\alpha+\frac12}, \quad 
 \sup_{t\in [0,T_0]} \| e^{\gamma x_2/\eps} \eps^{|\beta|} \partial^\beta \curl u_\bl^\app\|_{L^\infty(\Omega^\eps)} \lesssim \epsilon^{M}, \\
& \sup_{t\in [0,T_0]} \| \partial^\beta R_\inte^\app\|_{L^\infty(\Omega^\varepsilon)}+ \| \partial^\beta R_\inte^\app\|_{L^2(\Omega^\varepsilon)} \lesssim \epsilon^M, \\
& \sup_{t\in [0,T_0]} \| e^{\gamma x_2/\eps} \eps^{|\beta|} \partial^\beta R_\bl^\app\|_{L^\infty(\Omega^\varepsilon)} \lesssim \epsilon^{\alpha+1}, \quad 
 \sup_{t\in [0,T_0]} \| e^{\gamma x_2/\eps} \eps^{|\beta|} \partial^\beta R_\bl^\app\|_{L^2(\Omega^\varepsilon)} \lesssim \epsilon^{\alpha+\frac{3}{2}},
 \end{aligned}
 \right.
 \end{equation}
 in which $\pa$ denotes derivatives with respect to $x$. 
\end{proposition}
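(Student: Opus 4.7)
The plan is to construct $u^\app$ via a matched asymptotic expansion in powers of $\eps^\alpha$, of the form
\begin{equation*}
u^\app = u^0 + \sum_{k=1}^{N}\eps^{k\alpha}\bigl( u_k^\inte(t,x) + U_k(t,x_1,x/\eps) \bigr),
\end{equation*}
where the boundary layer profiles $U_k(t,x_1,y)$ are $\TT$-periodic in $y_1$ and decay exponentially as $y_2 \to +\infty$, and the interior correctors $u_k^\inte$ are smooth in the slow variable and vanish for $t < 0$. The hypothesis $\alpha = 1/N_0$ ensures that all exponents $k\alpha$ lie in $\frac{1}{N_0}\N$, so that the bookkeeping closes cleanly; taking $N\alpha \geq M$ will produce a remainder of size $\eps^M$.

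I insert the ansatz into the Euler equations \eqref{Euler} and expand $u^0$ and the previously constructed profiles around the flat boundary $x_2 = 0$ via Taylor's formula, using that $\pa \Omega^\eps = \{x_2 = \eps^{1+\alpha}\eta(x_1/\eps)\}$ is an $O(\eps^{1+\alpha})$-perturbation of $\pa \Omega^0$. Matching equal powers of $\eps^\alpha$ yields, at each order $k$, a cell problem for $U_k$ on the flat half-plane $\{y_2 > 0\}$:
\begin{equation*}
\Div_y U_k = 0, \qquad \curl_y U_k = 0, \qquad U_k \cdot e_2\bigr|_{y_2=0} = g_k(t,x_1,y_1), \qquad U_k \xrightarrow[y_2 \to \infty]{} 0,
\end{equation*}
where $g_k$ is determined by $u^0$, $u_j^\inte$ and $U_j$ for $j < k$. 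I solve this by writing $U_k = \nabla_y^\perp \Psi_k$ with $\Psi_k$ harmonic and $\TT$-periodic in $y_1$; the Fourier series in $y_1$ gives $\Psi_k$ explicitly, with exponential decay in $y_2$ at the rate of the lowest nonzero Fourier mode, which accounts for the weight $e^{\gamma x_2/\eps}$ in \eqref{all_bounds}. The solvability condition $\int_\TT g_k\, dy_1 = 0$ is restored by first subtracting the mean of $g_k$ and absorbing it into the tangency condition for $u_k^\inte$, which is then defined as the solution of a linearized Euler (or harmonic) problem in $\Omega^0$ with the prescribed normal trace on $\pa \Omega^0$.

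Finally, the residuals $R_\inte^\app$ and $R_\bl^\app$ are obtained by collecting the terms of order $\eps^{(N+1)\alpha}$ and above produced by the truncation, together with the higher-order Taylor remainders in the boundary expansion, which by construction split into an interior part (polynomial in $\eps$) and a boundary-layer part (carrying the exponential decay $e^{-\gamma x_2/\eps}$). Exact divergence-freeness and exact tangency $u^\app \cdot n^\eps = 0$ on $\pa \Omega^\eps$ are enforced by working throughout with stream functions (so $\Div u^\app = 0$ is automatic) and imposing the scalar boundary condition on the \emph{true} boundary via a lifting of the profile data. The $L^\infty$ and $L^2$ bounds in \eqref{all_bounds} then follow from smoothness of the profiles in $(t,x_1)$ together with the exponential decay in $y_2$, the $\eps^{1/2}$ gap between the two norms being the usual scaling of a layer of width $\eps$.

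The main obstacle is the systematic enforcement of the zero-mean compatibility condition for each cell Neumann problem, which forces a delicate coupling between $U_k$ and $u_k^\inte$ at every order; a secondary difficulty is the cancellation of the $\eps^{-1}$ cross-derivative terms in $\curl u^\app_\bl$ that arise because $U_k$ is curl-free in the fast variable $y$ but not in $x$, which must be iteratively absorbed into higher-order profiles $U_{k+1},\ldots,U_N$ in order to reach the estimate $\|\curl u^\app_\bl\|_{L^\infty} \lesssim \eps^M$ claimed in \eqref{all_bounds}.
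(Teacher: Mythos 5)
Your construction captures the rough skeleton of the paper's argument (expand in powers of $\eps^\alpha$, use fast variable $y=x/\eps$, couple boundary-layer profiles to interior correctors through a zero-mean compatibility condition), but the specific cell problem you pose cannot be maintained past the first $N_0$ orders, and this is not peripheral bookkeeping but the crux of the construction.

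You impose $\Div_y U_k = 0$ and $\curl_y U_k = 0$ together with the pure stream-function ansatz $U_k = \nabla_y^\perp \Psi_k$, $\Psi_k$ harmonic. With this ansatz the full vorticity of the layer is
$\curl_x u^\app_\bl = -\Delta_x\bigl(\eps\Psi(t,x_1,x/\eps)\bigr) = -\eps^{-1}\Delta_y\Psi - 2\partial_{x_1}\partial_{y_1}\Psi - \eps\partial_{x_1}^2\Psi$,
and if each $\Psi_k$ is harmonic the first term vanishes but the second is of size $\eps^\alpha$, not $\eps^M$. You recognise this at the end (``the cancellation of the $\eps^{-1}$ cross-derivative terms \dots must be iteratively absorbed''), but that absorption is incompatible with harmonicity: cancelling $2\partial_{x_1}\partial_{y_1}\Psi_k$ requires putting a nonzero source $\Delta_y\Psi_{k+N_0} = -2\partial_{x_1}\partial_{y_1}\Psi_k$ (and likewise a source at order $k+2N_0$ for $\partial_{x_1}^2\Psi_k$), so the $\Psi_k$, $k>N_0$, are not harmonic, contradicting the cell problem you stated. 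The same mechanism affects divergence: if each $U_k$ is taken $y$-divergence-free, the slow part $\partial_{x_1}U_{k,1}$ produces an $O(\eps^{k\alpha})$ divergence that must be absorbed by a nonzero $\Div_y U_{k+N_0}$, which a single stream function cannot carry. The paper resolves both issues simultaneously by replacing the pure stream function with a Helmholtz-type decomposition, $v^k_\bl = \nabla_z\psi^k_\bl + \nabla_z^\perp\phi^k_\bl$, and prescribing nonvanishing sources $\Delta_z\psi^k_\bl = -\partial_{x_1}v^{k-N_0}_{\bl,1}$ and $\Delta_z\phi^k_\bl = -\partial_{x_1}v^{k-N_0}_{\bl,2}$ at higher order (the systems \eqref{psikbl}--\eqref{phikbl}). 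Without such a two-component ansatz carrying both divergence and curl sources, you cannot reach the key estimate $\|\curl u^\app_\bl\|_{L^\infty}\lesssim\eps^M$, which is essential for the stability analysis.

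Two further deviations are worth noting, one substantive and one a matter of bookkeeping. Substantively, you only enforce the fast-variable zero-mean condition $\int_\TT g_k\,dy_1 = 0$; but to solve the interior linearized Euler problem for $u^k_\inte$ with the prescribed normal trace $h^k$ you also need the slow-variable condition $\int_\TT h^k(t,x_1)\,dx_1 = 0$, which the paper verifies by a Stokes argument using only that $u^0$ is divergence-free. Your proposal does not address it. On bookkeeping: you flatten the cell domain to $\{y_2>0\}$, whereas the paper keeps the rescaled boundary $z_2 = \eps^\alpha\eta(z_1)$ (the domain $\Omega_\bl$ depends on $\eps$, but regularly) and deliberately avoids expanding $\langle\eps^\alpha\eta'\rangle$ in powers of $\eps^\alpha$. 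Your flattening is not wrong in principle, but it introduces additional Taylor-remainder terms and forfeits the simplification $\int_{\pa\Omega_\bl}\langle\eps^\alpha\eta'\rangle^{-1}d\sigma = 1$ that the paper exploits in verifying compatibility; this is extra work you would need to close.
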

In this proposition and throughout the paper, the notation $g\lesssim h$ is used for bounds of the form $g \le C h$, for constants $C$ that are independent of $\varepsilon$ and $\nu$. 
\begin{remark}\label{rem-dtau}
As will be clear from the proof, the bounds \eqref{all_bounds} also apply to the time derivatives of all quantities. 
\end{remark}
\begin{remark}
The Euler approximation in the proposition is consistent with the results in \cite{GVLacave13,GVLacave15}, which show convergence of 
the Euler solution $u^{0,\eps}$ in $\Omega^\eps$ to the Euler solution $u^0$ in $\Omega^0$, in $L^2$ topology, for any $\alpha \ge 0$. We take advantage here of the special structure of the rough boundary to get a more accurate description of $u^{0,\eps}$: $u^\app_\inte$ is a macroscopic correction to $u^0$ of amplitude $\eps^{\alpha+1}$, while $u^{\app}_{\bl}$ describes a boundary layer of amplitude $\eps^\alpha$, typical scale $\eps$, and almost curl-free. 
\end{remark}

The second step of our approach, which is the central one, is to derive stability estimates for the previous approximation under the Navier-Stokes evolution. As this approximation has unbounded gradient as $\eps \rightarrow 0$, this stability does not follow from a standard energy estimate and a Gronwall lemma on $v = u^{\nu,\eps} - u^\app$. Our strategy is the following. In the $L^2$ estimate for the velocity, we write the bad stretching term as 
$$ \int_{\Omega^\eps} (v \cdot \na u^\app) \cdot v = \int_{\Omega^\eps} v \cdot (u^\app)^\perp \, \curl v $$
which is bounded by the product of the $L^2$ norms of the velocity and the vorticity. This requires in turn an $L^2$ estimate for the vorticity. This is not direct, as $\omega$ does not satisfy a good boundary condition. We overcome this problem through careful weighted estimates, with a weight that is of boundary layer type and vanishes at the boundary. Note that in this estimate, the smallness of $\curl u^\app_\bl$ is crucial. Combining the $L^2$ estimates for velocity and vorticity yields some stability, {\em under a smallness assumption on }  $\na v$ in $L^\infty$. The last part of our analysis is devoted to showing that such assumption holds. It mixes maximum principle estimates, time derivative estimates and inequalities of harmonic analysis in the rough domain. In deriving such inequalities, one must be very careful about the oscillations of the boundary. 

\medskip
Eventually, we prove: 
\begin{theorem}\label{theo-main} Let $f$, $u^0$, $(u^\app,p^\app)$ as in Proposition~\ref{prop-Euler}. Let $T_0 > 0$, 
$N_1 \in \mathbb{N}^*$ arbitrarily large. Then there exists $\eps_0$ such that for all $\eps \le \eps_0$, for all $\nu$ so that 
\begin{equation}\label{cond-nu-e}\varepsilon^{N_{1}} \lesssim \nu \lesssim \e^7,\end{equation}
and all $\lambda$ such that
\begin{equation*}
| \lambda |_{C^2} \lesssim \e^{-1+\alpha},
\end{equation*}
 the unique solution $u^{\nu,\eps} \in C^\infty(\R, H^\infty(\Omega^\eps))$ of the Navier-Stokes equations \eqref{NS} satisfies
$$ \sup_{0\le t \le T_0}\Big( \varepsilon^{-1/2} \| (u^{\nu,\eps} - u^\app)(t) \|_{L^2(\Omega^\varepsilon)} + \| (u^{\nu,\eps} - u^\app)(t)\|_{L^\infty(\Omega^\varepsilon)} + \e \| \curl (u^{\nu,\eps} - u^\app)(t)\|_{L^\infty(\Omega^\varepsilon)}\Big)\lesssim \e^\alpha.$$
 \end{theorem}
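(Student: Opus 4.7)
The proof will run a continuity (bootstrap) argument on $v := u^{\nu,\eps} - u^\app$. Using \eqref{NS} and \eqref{Eav-eqs}--\eqref{Eabdry-eqs}, $v$ satisfies
\begin{equation*}
\partial_t v + (u^\app + v)\cdot\nabla v + v\cdot\nabla u^\app + \nabla q - \nu\Delta v = \nu\Delta u^\app - R_\inte^\app - R_\bl^\app,
\end{equation*}
together with $\Div v = 0$, $v\cdot n^\eps|_{\pa\Omega^\eps} = 0$, and a non-homogeneous Navier condition with source $-(2D(u^\app)n^\eps + \lambda u^\app)\cdot\tau^\eps$. By \eqref{uneg} and Proposition~\ref{prop-Euler}, $v\vert_{t<0} = 0$. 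Let $T^\star \le T_0$ be the largest time on which the \emph{bootstrap assumption}
\begin{equation*}
\eps^{-1/2}\|v\|_{L^2(\Omega^\eps)} + \|v\|_{L^\infty(\Omega^\eps)} + \eps\|\curl v\|_{L^\infty(\Omega^\eps)} + \eps\|\nabla v\|_{L^\infty(\Omega^\eps)} \le \delta \eps^\alpha
\end{equation*}
holds, with $\delta$ a small constant to be fixed. The goal is to establish strict inequality on $[0,T^\star]$, which by continuity forces $T^\star = T_0$.

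\emph{$L^2$ velocity estimate and the stretching trick.} Pairing the equation with $v$ produces the usual energy identity. The only dangerous term is the stretching $\int(v\cdot\nabla u^\app)\cdot v$, which is a priori of size $\|\nabla u_\bl^\app\|_{L^\infty}\|v\|_{L^2}^2 \sim \eps^{\alpha-1}\|v\|_{L^2}^2$ and hence not absorbable by Gronwall. As advertised in the introduction, I use the algebraic identity
\begin{equation*}
\int_{\Omega^\eps}(v\cdot\nabla u^\app)\cdot v = \int_{\Omega^\eps} v\cdot (u^\app)^\perp\,\curl v,
\end{equation*}
which is a consequence of the Lamb identity $(v\cdot\nabla)u + (u\cdot\nabla)v = \nabla(u\cdot v) - \curl(v)\,u^\perp - \curl(u)\,v^\perp$ combined with $v\cdot v^\perp = 0$, $\Div u^\app = \Div v = 0$ and the tangency of both fields on $\pa\Omega^\eps$. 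Since $\|u^\app\|_{L^\infty} = O(1)$, the bad term is now bounded by $\|v\|_{L^2}\|\omega\|_{L^2}$ with $\omega := \curl v$, transferring the difficulty to a vorticity estimate. The Navier-type boundary source is handled via $|\lambda|_{C^2}\lesssim\eps^{-1+\alpha}$ and a trace inequality in $\Omega^\eps$; the forcings $\nu\Delta u^\app$ and $R^\app$ are controlled by \eqref{all_bounds} once $\nu\lesssim\eps^7$ (so that $\nu\Delta u^\app_\bl$ is negligible against the target).

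\emph{Weighted $L^2$ vorticity estimate.} Taking the curl of the $v$-equation gives
\begin{equation*}
\partial_t\omega + (u^\app+v)\cdot\nabla\omega - \nu\Delta\omega = -(v\cdot\nabla)\curl u^\app + \nu\Delta\curl u^\app - \curl R^\app.
\end{equation*}
Since $\omega$ carries no usable boundary condition, I test against $\varphi^2\omega$, where $\varphi$ is a boundary-layer cut-off vanishing on $\pa\Omega^\eps$ and equal to $1$ outside an $\eps$-neighborhood of the boundary---for instance $\varphi(x) = \chi\bigl((x_2 - \eps^{1+\alpha}\eta(x_1/\eps))/\eps\bigr)$ with $\chi$ smooth and $\chi(0)=0$. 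Integration by parts yields the dissipation $\nu\int\varphi^2|\nabla\omega|^2$, at the cost of a commutator error $\nu\int|\nabla\varphi|^2\omega^2 = O(\nu\eps^{-1})$ concentrated in the boundary layer, which is absorbable thanks to the constraint $\nu\lesssim\eps^7$. The key structural gain is that the right-hand side involves $\curl u^\app_\bl = O(\eps^M)$ by \eqref{all_bounds} (the inviscid boundary layer is almost irrotational), so the near-boundary source is negligible. Combining with the previous step gives a closed Gronwall system for $\|v\|_{L^2}^2 + \|\varphi\omega\|_{L^2}^2$ that yields the target $\|v\|_{L^2}\lesssim\eps^{\alpha+1/2}$ on $[0,T^\star]$.

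\emph{$L^\infty$ bounds, closure, and main obstacle.} To close the bootstrap one must upgrade the $L^2$ bounds to pointwise ones. A maximum principle on the vorticity equation produces $\eps\|\omega\|_{L^\infty}\lesssim\eps^\alpha$: the source $(v\cdot\nabla)\curl u^\app_\bl$ is only $O(\eps^M)\|v\|_{L^\infty}$, while the contribution from $u^\app_\inte$ is $O(\eps^{\alpha+1})$, both summable under the bootstrap. The $L^\infty$ bounds on $v$ and $\nabla v$ then come from Stokes elliptic regularity applied to the quasi-stationary problem with right-hand side $-\partial_t v + \text{(lower order)}$; time derivatives of $v$ are estimated by re-running the $L^2$-vorticity scheme after differentiating in $t$, using that all relevant bounds on $u^\app$ survive differentiation (Remark~\ref{rem-dtau}). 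The Sobolev and Calder\'on--Zygmund inequalities are applied with $\eps$-\emph{uniform} constants by pulling back to the flat domain $\Omega^0$ via a diffeomorphism flattening $\pa\Omega^\eps$; the Jacobian's slope $\eps^\alpha\eta'$ remains bounded, but higher derivatives (producing the curvature of size $\eps^{\alpha-1}$) must be tracked extremely carefully. The hardest step is the weighted vorticity estimate: because $\omega$ has no boundary condition, balancing the $\eps^{-1}$ commutator against $\nu\int\varphi^2|\nabla\omega|^2$ is what dictates the upper ceiling $\nu\lesssim\eps^7$, and establishing the $\eps$-uniform harmonic-analysis inequalities in a domain whose curvature blows up as $\eps^{\alpha-1}$ is the real technical crux of the argument.
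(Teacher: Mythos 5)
Your high-level road map is correct: decompose $u^{\nu,\eps}=u^\app+v$, defeat the stretching term via the Lamb identity $\int (v\cdot\nabla u^\app)\cdot v=\int v\cdot(u^\app)^\perp\curl v$, trade it for an $L^2$ vorticity estimate that uses a boundary-vanishing weight, then upgrade to $L^\infty$ and close by bootstrap. But two of your concrete design choices would make the argument fail, and you misattribute where the constraint $\nu\lesssim\eps^7$ comes from.

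\emph{The weight is at the wrong scale, and cutoff-type does not suffice.} You propose $\varphi(x)=\chi((x_2-\eps^{1+\alpha}\eta(x_1/\eps))/\eps)$, a cutoff interpolating from $0$ to $1$ over the physical boundary-layer width $\eps$, and you treat only the \emph{diffusive} commutator $\nu\int|\nabla\varphi|^2\omega^2=O(\nu\eps^{-1})$. The genuinely dangerous term is the \emph{convective} commutator $\int(u^\app+v)\cdot\nabla(\varphi)\,\omega^2$. Since $\nabla\varphi\sim \eps^{-1}n$ is supported where $d(x,\pa\Omega^\eps)\lesssim\eps$ and $|(u^\app+v)\cdot n|\lesssim d\cdot\|\nabla(u^\app+v)\|_{L^\infty}$, this commutator is of size $\|\nabla(u^\app+v)\|_{L^\infty}\,\|\omega\|_{L^2(\{d\lesssim\eps\})}^2$. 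Under your own bootstrap $\|\nabla(u^\app+v)\|_{L^\infty}\sim\eps^{\alpha-1}$, so the Gronwall constant is $\eps^{\alpha-1}$, unbounded over $[0,T_0]$, and the diffusion $\nu\int\varphi^2|\nabla\omega|^2$ does not help because $\nu\eps^{-2}$ and $\eps^{\alpha-1}$ are not of the same order. The paper's weight is not a cutoff: after rescaling $(\hat\tau,z)=(t,x)/\eps$ (which you omit but which is the device that makes $u^\app$'s $z$-derivatives $O(1)$), they build $\phi=\phi(\tilde z_2)$ with $\phi'\propto e^{-m\tilde z_2^2/(2\tilde\nu)}$, $\tilde\nu=\nu/\eps$, so that the combined commutator $(\tilde u^\app+v)\cdot\nabla\phi+\tilde\nu\Delta\phi\lesssim\tilde\nu\eps^\alpha$ \emph{pointwise}, via the exact cancellation $m\tilde z_2\phi'+\tilde\nu\phi''=0$. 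The transition width is $\sqrt{\tilde\nu}$ in the rescaled variable, i.e.\ $\sqrt{\nu\eps}\ll\eps$ in the physical one; the plateau value of the weight is $\sqrt{\tilde\nu}$, not $1$. None of this is recoverable from a scale-$\eps$ cutoff.

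\emph{The vorticity does have a usable boundary condition.} You write that $\omega$ ``carries no usable boundary condition'' and later invoke a maximum principle; both cannot be true. In fact the Navier condition and the tangency of $v$ convert into the inhomogeneous Dirichlet condition $\omega|_{\pa\Omega^\eps}=2D(\tilde u^\app)n\cdot\tau+\eps\lambda\tilde u^\app\cdot\tau+(2\kappa+\eps\lambda)v\cdot\tau$, which is essential at two places: in the boundary term $\tilde\nu\int_{\pa}\partial_n\phi\,|\omega|^2$ of the weighted estimate (where the paper's trace lemma controlling $\|v\|_{L^2(\pa)}$ by $\|v\|_{L^2}^{1/2}\|\curl v\|_{L^2}^{1/2}$ is crucial), and as the Dirichlet datum in the comparison-principle argument that gives $\|\omega\|_{L^\infty}\lesssim\eps^\alpha$. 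Your proposal should state and use this boundary condition explicitly.

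\emph{The role of $\nu\lesssim\eps^7$ and of the $H^2$ bootstrap.} You assert that $\nu\lesssim\eps^7$ is ``what balances the $\eps^{-1}$ commutator against $\nu\int\varphi^2|\nabla\omega|^2$''. In the paper's actual scheme the constraint $\tilde\nu\lesssim\eps^6$ (equivalently $\nu\lesssim\eps^7$) appears when passing from the $L^2$ vorticity bound $\|\omega\|_{L^2(\{d\gtrsim\sqrt{\nu\eps}\})}\lesssim\eps^{\alpha+1/2}+\tilde\nu^{1/4}\eps^{\alpha-1}$ to the $L^\infty$ estimate via the logarithmic elliptic inequality: one needs $\tilde\nu^{1/4}\eps^{\alpha-1}\lesssim\eps^{\alpha+1/2}$, i.e.\ $\tilde\nu\lesssim\eps^6$. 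The lower constraint $\nu\gtrsim\eps^{N_1}$ controls the logarithmic loss $\ln(2+\eps^{-2}\|v\|_{H^2})$ appearing in the pointwise elliptic estimates, which is also why the paper bootstraps the additional quantity $\tilde\nu^K\|v\|_{H^2}\le 1$. Your proposal does not track this $H^2$ quantity, and ``Calder\'on--Zygmund with $\eps$-uniform constants after flattening'' is too optimistic a gloss: the paper instead proves a bespoke logarithmic bound (Proposition~\ref{propEl}) via Littlewood--Paley on $\mathbb{T}\times\R_+$ and then controls the curvature-induced lower-order terms by an absorption argument requiring $\eps^\alpha\ll 1/\ln\tilde\nu$.

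In short: keep the decomposition, the Lamb trick, the idea of a boundary-vanishing weight, and the maximum-principle / elliptic-estimate closure, but (i) replace your scale-$\eps$ cutoff by the drift-diffusion-balanced Gaussian weight at scale $\sqrt{\nu\eps}$, (ii) use the Dirichlet boundary condition $\omega=\omega_\Gamma$ from the Navier law throughout, (iii) work in the rescaled variables $(t,x)/\eps$, and (iv) carry the $H^2$ bootstrap quantity, which together with the logarithmic elliptic estimates is what actually pins down the window $\eps^{N_1}\lesssim\nu\lesssim\eps^7$.
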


The constraint \eqref{cond-nu-e} is explained in Remark~\ref{rem-enu}. As a corollary of Proposition~\ref{prop-Euler} and Theorem~\ref{theo-main}, we obtain the following vanishing viscosity and rugosity limit:
$$ \sup_{0\le t \le T_0}\Big( \| u^{\nu,\e} - u^0 \|_{L^2(\Omega^\varepsilon)} + \| u^{\nu,\e} - u^0\|_{L^\infty(\Omega^\varepsilon)} \Big) \to 0,$$
in the limit $(\varepsilon, \nu) \to 0$, provided $\varepsilon^{N_{1}} \lesssim \nu \lesssim \e^7$ and $| \lambda |_{C^2} \lesssim \e^{-1+\alpha}$.

\medskip
Let us stress again that the novelty and difficulty of this inviscid limit result lie in the consideration of the {\em joint asymptotics} $(\nu, \eps) \rightarrow 0$. In this way, it is very different from both 
\begin{itemize}
\item the asymptotic results for Navier-Stokes equations in rough domains ($\nu = 1, \eps \rightarrow 0$), like in \cite{JagerMikelic,BFNW,MikNec}. 
\item the inviscid limit result for the Navier-Stokes equations with slip law in a smooth domain ($\nu \rightarrow 0$, $\eps = 1$), like in \cite{IftimieSueur}. 
\end{itemize}
In those latter cases, the asymptotic description of the Navier-Stokes solution also involves the construction of refined approximations, including boundary layer correctors. But to establish the $L^2$ stability of these approximations is quite easy, as the gradient of the approximate velocity is bounded in $L^\infty$. This is in sharp contrast with the present setting, in which the gradient diverges with the roughness parameter $\ep$, making the stability analysis the core of our paper. 

On the technical point of view, our approach can be compared to the recent works \cite{MasRou1, MasRou2}, where uniform $H^s$ type estimates are obtained for Navier-Stokes solutions in smooth domains, endowed with a Navier boundary condition or with a free surface. Indeed, our proof of stability uses vorticity and high order estimates which borrow a little to the methodology of \cite{MasRou1, MasRou2}. Still the difficulties related to the roughness are specific to our work. Indeed, in \cite{MasRou1, MasRou2} the estimates close again because the velocity has bounded gradient
in $L^\infty$ which is not true here.

Furthermore, our vorticity estimates will be based on rather elementary weighted estimates, very distinct from the elaborate semiclassical arguments used in \cite{MasRou1}. 

Eventually, although it is further from our concern, let us also mention the recent article \cite{LacaveMazzucato}: the authors consider a Navier-Stokes flow with no-slip in a porous medium, and prove convergence to the Euler flow in $L^\infty(0,T;L^2)$ in the simultaneous limit of vanishing porosity and viscosity. To avoid Prandtl instabilities (\cite{Grenier00CPAM,GGN, DGV,DGVN,GN}), a suitable assumption relates the viscosity and the porosity parameter.


\section{Euler approximation}\label{sec-Euler}
In this section, we construct an approximate solution of the Euler equations in 
$$\Omega^\epsilon = \{ x_1 \in \TT, \: x_2 > \eps^{1+\alpha} \eta(x_1/\eps)\}, \text{ with }\frac1\alpha = N_{0}\in \N^*,$$
and give a proof of Proposition~\ref{prop-Euler}. 
The system reads 
\begin{equation} \label{Euler_eps}
\left\{
\begin{aligned}
\pa_t u + u \cdot \na u + \na p = f &\quad \mbox{in} \: \Omega^\eps, \\
{\rm div} \: u = 0 &\quad \mbox{in} \: \Omega^\eps, \\
u \cdot n^\eps = 0 &\quad \mbox{on} \: \pa \Omega^\eps, \\
u\vert_{t < 0} = 0&\quad \mbox{in} \: \Omega^\varepsilon. 
\end{aligned}
\right. 
\end{equation}
Here $n^\eps$ is the inward unit normal vector: 
$$ n^\eps(x_1) = \frac{1}{\langle \eps^\alpha \eta'(x_1/\eps)\rangle } (-\eps^\alpha \eta'(x_1/\eps), 1)$$ 
and $\langle \xi \rangle = \sqrt{1 + |\xi|^2}$ is the usual japanese bracket. The goal is to prove Proposition~\ref{prop-Euler}. Let $(u^0, p^0)$ the smooth solution of \eqref{Euler}.

\subsection{Boundary layer variables}
The pair $(u^0, p^0)$ is of course still a solution of the momentum equation in $\Omega^\eps \Subset \Omega^0$, but it does not satisfy the non-penetration condition (\ref{Euler_eps}c). Precisely, on $x_2 = \e^{1+\alpha} \eta(\frac{x_1}{\e})$, we compute

\begin{equation}\label{exp-Bv}
\begin{aligned}
u^0 \cdot n^\eps &= \frac{u_2^0 (t,x_1, \e^{1+\alpha} \eta )- \e^\alpha \eta' u_1^0 (t,x_1, \e^{1+\alpha} \eta ) }{\langle \eps^\alpha \eta'\rangle }
\\&= \frac{- \e^\alpha \eta' u_1^0(t, x_1, 0) + \e^{1+\alpha} \eta \partial_{x_2}u_2^0(t, x_1,0) - \e^{1+2\alpha} \eta \eta' \partial_{x_2}u_1^0 (t, x_1, 0) +\cdots}{\langle \eps^\alpha \eta'\rangle }
\\&= \sum_{k=1}^N \e^{\alpha k} B_k[u^0](t,x_1,\frac{x_1}{\e}) + E_0(t,x_1,\frac{x_1}{\e}), 
\end{aligned}\end{equation}
in which $E_0$ denotes the remainder term of order $\e^{\alpha (N+1)}$ or smaller. It is understood here that we do not expand the expression $\langle \eps^\alpha \eta' \rangle $ in powers of $\eps^\alpha$. In this way, there will be no term of order $\varepsilon^{\alpha k}$ in $u^\app$ for $1<k\leq N_{0}$: see \eqref{psivanishes}. It will also simplify the calculation of some boundary integrals, because $\int_{\partial\Omega^\varepsilon} \frac{f(x)}{\langle \eps^\alpha \eta' \rangle } d\sigma(x) = \int_{\mathbb{T}} f( x_{1}, \eps^{\alpha+1}\eta(x_{1}/\eps)) dx_{1}$. With this convention, 
$$B_1[u^0](t,x_1,\frac{x_1}{\e}) = -\frac{\eta'(\frac{x_1}{\e})}{\langle \eps^\alpha \eta' (\frac{x_1}{\e})\rangle } u_1^0(t,x_1,0), \quad \text{ while $B_k[u^0] = 0$ for all $1 < k \le N_0$.}$$ 
The dependence of operators $B_k$ with respect to $\eps$ is omitted in the notation. 

\medskip
To correct the error term at the boundary, we must add boundary layer terms. For $x \in \Omega^\e$, we introduce the boundary layer variable $z = \frac{x}{\eps}$, that belongs to $\Omega_\bl := \{ z_1 \in \TT, \: z_2 > \eps^\alpha \eta(z_1) \}$. The rescaled domain $\Omega_\bl$ still depends on $\eps$, but not singularly, so that again, we omit it in the notation. We look for an approximate solution in the form:
\begin{equation*}
u^\app(t,x) \approx u^0(t,x) + u^\app_\inte(t,x) + v^\app_\bl \left(t,x_1,\frac{x}{\eps}\right),
\end{equation*}
with boundary layer part 
$$ v^\app_\bl \left(t,x_1,z\right) = \sum_{k=1}^N \e^{\alpha k} v^k_\bl(t,x_1,z) $$
and interior part 
$$ u^\app_\inte (t,x): = \sum_{k=1}^N \e^{\alpha k} u^k(t,x).$$
The boundary layer profiles $v_\bl^k(t,x_1, z)$ will be defined for $x_1 \in \TT$, $z \in \Omega_\bl$, they will be periodic in $z_1$ (that is $z_1 \in \TT$) and will decay rapidly in $z_2$. The interior profiles $u_\inte^k(t,x)$ will be defined for $x \in \Omega^0$. They will arise in the construction process in order to relax compatibility conditions that the boundary layer profiles must satisfy. 

\subsection{Construction of boundary layer profiles}

We would like the boundary layer expansion 
\begin{equation} \label{uappbl}
 u^\app_\bl(t,x) = v^\app_\bl \left(t,x_1,\frac{x}{\eps}\right)
 \end{equation}
to satisfy the boundary condition 
$$ u^\app_\bl \cdot n^\eps \approx - (u^0 + \sum_{k=1}^N \eps^{\alpha k} u^k) \cdot n^\eps \quad \textrm{at} \: \pa \Omega^\eps,$$
as well as 
$$ {\rm div} \, u^\app_\bl \approx 0, \quad \curl u^\app_{\bl}= \partial_{x_{1}}u^\app_{\bl,2}-\partial_{x_{2}}u^\app_{\bl,1} \approx 0. $$
The last curl-free condition is a mathematical requirement, that will be essential to our stability analysis. It does not follow from the Euler dynamics, but as we shall see, it will not create a too large error term in the momentum equation. 
Using expansions \eqref{exp-Bv} and \eqref{uappbl}, we find 
\begin{equation} \label{BCbis}
\begin{aligned}
v^k_\bl(t,x_1,z) \cdot n(z_1) = & - (B_k[u^0] + B_{k-1}[u^1] + \dots + B_1[u^{k-1}])(t,x_1,z_1) \\
& - \: \frac{u^k_2(t,x_1,0) }{\langle \eps^\alpha \eta'\rangle }, \quad z \in \pa \Omega_\bl, 
\end{aligned}
\end{equation}
 as well as 
 $$ \divz v^k_\bl = - \pa_{x_1} v_{\bl,1}^{k - N_0}, \quad \curlz v^k_\bl = - \pa_{x_1} v_{\bl,2}^{k - N_0} \quad \textrm{in} \: \Omega_{bl}, $$
 with the convention $v^k_\bl = 0$ whenever $k \le 0$. 

\medskip
To satisfy the last two equations, we express the boundary layer profiles in terms of a potential and a stream function, 
\begin{equation} \label{stream_potential}
v_\bl ^k = \nabla_z \psi_\bl^k(t,x_1,z) + \nabla_z^\perp \phi_\bl^k (t,x_1,z)
\end{equation}
and impose
$$ \Delta_z \psi_\bl^k = - \pa_{x_1} v_{\bl,1}^{k - N_0}, \quad \Delta_z \phi_\bl^k = - \pa_{x_1} v_{\bl,2}^{k - N_0}. $$
As regards the boundary condition \eqref{BCbis}, it appears natural to prescribe 
$$ n(z_1) \cdot \nabla_z \psi_\bl^k(t,x_1,z) = - \sum_{j=0}^{k-1}B_{k-j}[u^j](t,x_1,z_1) - \frac{u^k_2(t,x_1,0) }{\langle \eps^\alpha \eta'\rangle }, \quad z \in \pa \Omega_\bl $$
with $n(z_1) = \frac{1}{\langle \eps^\alpha \eta'(z_1)\rangle } (-\eps^\alpha \eta'(z_1),1)$, and 
$$ \phi^k_\bl(t,x_1,z) = 0, \quad z \in \pa \Omega_\bl $$ 
(so that the normal component of $\nabla_z^\perp \phi_\bl^k$ vanishes at $\pa \Omega_{bl}$). However, a slight subtlety comes from the Laplace equation on $\psi^k_\bl$. Indeed, the source term and the boundary data must satisfy some compatibility condition, so as to ensure the existence of a solution whose gradient decays in variable $z_2$. To guess the right compatibility condition is easy: by integrating the Laplace equation on 
$\Omega_\bl$, using Stokes formula and the fact that $\int_{\pa \Omega_\bl} 1/\langle \eps^\alpha \eta'(z_{1})\rangle d\sigma(z) =\int_{0}^1 1\,ds=1$. We find that necessarily
\begin{equation} \label{BCint}
u^k_2\vert_{x_2 = 0} = h^k
\end{equation}
where 
\begin{equation} \label{hk}
h^k = h^k(t,x_1) = -\int_{\Omega_\bl} \pa_{x_1} v_{\bl,1}^{k - N_0}(t,x_1,z) dz - \int_{\pa \Omega_\bl} \sum_{j=0}^{k-1}B_{k-j}[u^j](t,x_1,z_1) .
\end{equation}
The appropriate boundary layer system on $\psi_\bl^k$ then becomes 
\begin{equation} \label{psikbl}
\left\{
\begin{aligned}
\Delta_z \psi_\bl^k & = - \pa_{x_1} v_{\bl,1}^{k - N_0}, \quad z \in \Omega_\bl, \\
n \cdot \na_z \psi_\bl^k & = - \sum_{j=0}^{k-1}B_{k-j}[u^j] -\frac{h^k}{ \langle \eps^\alpha \eta'(z_{1})\rangle }, \quad z \in \pa \Omega_\bl. 
\end{aligned}
\right.
\end{equation}
Concerning the stream function $\phi^k_\bl$, no compatibility condition is {\it a priori} needed. It satisfies 
\begin{equation} \label{phikbl}
\left\{
\begin{aligned}
\Delta_z \phi_\bl^k & = - \pa_{x_1} v_{\bl,2}^{k - N_0}, \quad z \in \Omega_\bl, \\
 \phi_\bl^k & = 0, \quad z \in \pa \Omega_\bl. 
\end{aligned}
\right. 
\end{equation} 
One must then address the solvability of the family of systems \eqref{psikbl}-\eqref{phikbl}, indexed by $k \in \N^*$.
We remind that defining $h^k$ through \eqref{hk} is necessary for the existence of a decaying solution to \eqref{psikbl}. This turns out to be a sufficient condition to solve all boundary layer systems, as follows from 
\begin{prop}
Under definition \eqref{hk}, the family of systems \eqref{psikbl}-\eqref{phikbl} has a unique family of smooth solutions $(\psi^k_\bl,\phi^k_\bl)$ (indexed by $k \in \N^*$) such that, for all $T_0 > 0$, for all $\tilde \gamma \in (0,1)$, for all $a,b,s \in \N$, there exists $C$ such that 
$$\sup_{t \in [0,T_0],x_1} \left( \| e^{\tilde \gamma z_2} \pa^a_t \pa^b_{x_1} \psi^k_\bl(t,x_1,\cdot) \|_{H^s(\Omega_\bl)} + \| e^{\tilde \gamma z_2}  \pa^a_t \pa^b_{x_1} \na_z \phi^k_\bl(t,x_1,\cdot) \|_{H^{s}(\Omega_\bl)} \right) \le C. $$
\end{prop}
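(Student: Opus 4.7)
I would argue by induction on $k \in \N^*$. For $k \le N_0$, the convention $v_\bl^{k-N_0} = 0$ makes the bulk sources in \eqref{psikbl} and \eqref{phikbl} vanish, reducing them to Laplace equations on $\Omega_\bl$ with smooth inhomogeneous Neumann/Dirichlet data depending smoothly on $(t,x_1)$. For $k > N_0$, the inductive hypothesis produces a smooth $v_\bl^{k-N_0}$ with exponential decay in $z_2$, so its $\pa_{x_1}$-derivative provides an exponentially decaying bulk source. Existence of $\phi_\bl^k$ then follows from Lax--Milgram in $\dot H^1_0(\Omega_\bl)$ (with the Dirichlet form). For $\psi_\bl^k$, the Neumann compatibility condition $\int_{\Omega_\bl} \Delta_z \psi_\bl^k = \int_{\pa \Omega_\bl} n\cdot\na_z \psi_\bl^k$ boils down, using $\int_{\pa \Omega_\bl} d\sigma/\langle \eps^\alpha \eta'\rangle = 1$, precisely to the definition \eqref{hk} of $h^k$; Lax--Milgram in $\dot H^1(\Omega_\bl)/\R$ then yields a solution, which I normalize by $\psi_\bl^k \to 0$ as $z_2 \to +\infty$. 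Interior elliptic regularity provides Sobolev smoothness in $z$ to any order.

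\textbf{Exponential decay (the core step).} I would flatten the boundary by setting $\tilde z_2 = z_2 - \eps^\alpha \eta(z_1)$, which maps $\Omega_\bl$ onto $\TT \times \R_+$ and turns $\Delta_z$ into a uniformly elliptic operator with smooth $z_1$-periodic coefficients bounded uniformly in $\eps$. Expanding in Fourier series in $z_1 \in \TT$, the nonzero modes of the principal (flat) part satisfy ODEs in $\tilde z_2$ with spectral gap of order $(2\pi|n|)^2$, hence decay at rate $\ge 2\pi > 1$; the corrugation coupling is treated as a smooth perturbation. Rather than work mode-by-mode, I would obtain a clean decay rate by a weighted energy estimate: multiplying the equation by $e^{2\tilde\gamma z_2}\psi_\bl^k$ (resp. $e^{2\tilde\gamma z_2}\phi_\bl^k$) and integrating by parts on $\Omega_\bl$ yields
$$\int_{\Omega_\bl} e^{2\tilde\gamma z_2}|\na_z \psi_\bl^k|^2 \lesssim \tilde\gamma^2 \int_{\Omega_\bl} e^{2\tilde\gamma z_2}|\psi_\bl^k|^2 + \text{(weighted boundary and source terms)},$$
and the Poincar\'e inequality on $\TT$ (in the $z_1$-direction) absorbs the quadratic weight term into the left-hand side as soon as $\tilde\gamma < 1$, since the nonzero Fourier modes in $z_1$ control the full gradient. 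The zero Fourier mode (in $z_1$) requires separate treatment: for $\psi_\bl^k$ it satisfies $\bar\psi''(\tilde z_2) = \bar g(\tilde z_2)$ with Neumann datum $\bar\psi'(0) = \bar h$, and the compatibility relation $\bar h + \int_0^\infty \bar g = 0$ built into \eqref{hk} gives
$$\bar\psi'(\tilde z_2) = -\int_{\tilde z_2}^\infty \bar g(s)\,ds,$$
so the zero mode inherits the exponential decay of $\bar g$, which holds by induction.

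\textbf{Parameter regularity and closing the induction.} Differentiating \eqref{psikbl}--\eqref{phikbl} in $t$ and $x_1$ produces elliptic problems of the same form, whose data (including $h^k$, which depends smoothly on $(t, x_1)$ through the integrals in \eqref{hk}) remain smooth and exponentially decaying. Iterating the existence and decay arguments therefore yields the weighted $H^s$ bounds claimed, uniformly in $(t,x_1) \in [0,T_0] \times \TT$. The step I expect to be most delicate is the zero-Fourier-mode analysis of $\psi_\bl^k$: the compatibility condition \eqref{hk} must simultaneously furnish solvability of the Neumann problem and the exact integral identity needed for $\bar\psi'$ to be integrable to $+\infty$, and maintaining this matching through the induction, where $v_\bl^{k - N_0}$ feeds back into both the bulk source and $h^k$, is the technical heart of the proof.
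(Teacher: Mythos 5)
Your plan shares the paper's scaffolding (induction in $k$, Lax--Milgram for existence, elliptic regularity for smoothness, and differentiation in $(t,x_1)$ for parameter regularity), but it uses a genuinely different mechanism for the exponential decay. The paper works directly in the region $z_2>1$ where $\Omega_\bl$ is flat, and shows inductively that
$\psi^k_\bl = \sum_{j\neq 0} e^{ijz_1}e^{-|j|z_2}P^k_j(z_2)$ and
$\phi^k_\bl = Q^k_0 + \sum_{j\neq 0} e^{ijz_1}e^{-|j|z_2}Q^k_j(z_2)$,
with $P^k_j,Q^k_j$ polynomials and $Q^k_0$ a constant, by solving the ODEs $(\pa^2_{z_2}-j^2)\widehat f_j = F_j$ and invoking \cite[Lemma 2.2]{Chupin} for the Neumann problem with compatibility; you instead flatten the boundary and run a weighted energy/Poincar\'e argument. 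Both routes yield any rate $\tilde\gamma<1$; yours is more elementary and self-contained, while the paper's gives the explicit polynomial-times-exponential structure that is convenient later (and is what makes the remark $\curl u^\app_\bl = \cO(\eps^{\alpha(N+1)})$ transparent).

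There is, however, a concrete gap in the $\phi^k_\bl$ branch of your decay argument. You test the equation with $e^{2\tilde\gamma z_2}\phi^k_\bl$, but $\phi^k_\bl$ does \emph{not} decay: its zero Fourier mode tends to the constant $Q^k_0$ as $z_2\to\infty$ (this is exactly why the proposition only bounds $\na_z\phi^k_\bl$, not $\phi^k_\bl$). With that test function the weighted integrals are infinite. You need first to identify the limit constant (e.g., from the zero-mode ODE, $\bar\phi'(s)=-\int_s^\infty\bar g$, so $Q^k_0 = \int_0^\infty \bar\phi'$, with $\bar\phi'(0)=-\int_0^\infty\bar g$ forced by finite Dirichlet energy) and then run the energy estimate on $\phi^k_\bl-Q^k_0$; alternatively work with $\na_z\phi^k_\bl$ directly. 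Relatedly, your explicit zero-mode treatment covers $\psi^k_\bl$ only; the Dirichlet zero mode for $\phi^k_\bl$ needs the one-line observation just given. Finally, the phrase ``the corrugation coupling is treated as a smooth perturbation'' deserves a justification: the flattening $\tilde z_2=z_2-\eps^\alpha\eta(z_1)$ perturbs the Laplacian by terms of size $\cO(\eps^\alpha)$, and that smallness (not just boundedness) is what makes both the absorption in the weighted estimate and the uniformity of the constant in $\eps$ work.
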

\begin{proof}
The proposition is proved inductively on $k$. We first explain the case $a = b = 0$. We consider $t$ and $x_1$ as fixed parameters in these PDEs in variable $z$, and omit them temporarily from the notations. 

 The key is to show by induction on $k$ that \eqref{psikbl} and \eqref{phikbl} have smooth solutions 
$\psi^k_\bl$ and $\phi^k_\bl$ with the following property: for $z_2 > 1$, their Fourier series expansions in $z_1 \in \mathbb{T}$ are of the form 
\begin{equation} \label{Fourier_psi_phi}
\begin{aligned}
& \psi^k_\bl = \sum_{j \in \mathbb{Z}^*} e^{i j z_1} e^{-|j| z_2}P_j^k(z_2), \\
& \phi^k_\bl = Q^k_0 + \sum_{j \in \mathbb{Z}^*} e^{i j z_1} e^{-|j| z_2} Q^k_j(z_2) 
\end{aligned} 
\end{equation}
where $P^k_j$ and $Q^k_j$ are polynomials in $z_2$ for $j \in \mathbb{Z}^*$, while $Q^k_0$ is a constant. The proposition follows easily from such statement. 

In the case of \eqref{psikbl} and $\psi^k_\bl$, in which the compatibility condition is involved, this statement follows easily from \cite[Lemma 2.2]{Chupin}. In the case of \eqref{phikbl} and $\phi^k_\bl$, for which Dirichlet conditions hold at the boundary, the statement is even simpler to prove. In both cases, the existence and uniqueness of a weak solution is proved thanks to a Lax-Milgram lemma. The smoothness of the solution is deduced from the classical elliptic regularity. Eventually, behavior \eqref{Fourier_psi_phi} follows from the Fourier transform of the Laplace equations, which leads inductively to the ODE's 
$$ (\pa^2_{z_2} - j^2) \widehat{\psi^k_\bl}(j,z_2) = F_j(z_2), (\pa^2_{z_2} - j^2) \widehat{\phi^k_\bl}(j,z_2) = G_j(z_2) $$
with sources $F_j$ and $G_j$ being products of $\exp(-|j| z_2)$ and a polynomial. 

The last step of the proof is to establish smoothness with respect to $t$ and $x_1$. 
In short, it comes from the fact that $t$- and $x_1$-derivatives of $\psi^k_\bl, \phi^k_\bl$ satisfy the same kind of equations as $\psi^k_\bl, \phi^k_\bl$ themselves, and so the same kind of estimates. For instance, $\partial_{x_{1}}\psi^k_\bl$ satisfies formally
\begin{equation*}
\left\{
\begin{aligned}
\Delta_z \partial_{x_{1}}\psi_\bl^k & = - \pa^2_{x_1} v_{\bl,1}^{k - N_0}, \quad z \in \Omega_\bl, \\
n \cdot \na_z \partial_{x_{1}}\psi_\bl^k & = - \sum_{j=0}^{k-1}B_{k-j}[\partial_{x_{1}}u^j] -\frac{\partial_{x_{1}}h^k}{ \langle \eps^\alpha \eta'(z_{1})\rangle }, \quad z \in \pa \Omega_\bl. 
\end{aligned}
\right.
\end{equation*}
We refer to \cite{OleYos1,OleYos2,NeussMikelic} for more on related problems.
This concludes the construction of the boundary layer correctors.
\end{proof}
\begin{remark}
We insist that $\Omega_\bl$ depends on $\eps$, but as this dependence is regular, the constant $C$ in the estimate of the last proposition does not depend on $\eps$. 
\end{remark}

\subsection{Construction of the interior profiles}
From the analysis of the previous paragraph, we see that if the boundary layer profiles $v^j$ and interior profiles $u^j$ are given for $j \le k-1$, one can construct $v^k_\bl$ using formula \eqref{stream_potential} and systems \eqref{psikbl}--\eqref{phikbl}. In order to close the iterative process, we still need to explain how to build the interior profile $u^k$.

\medskip
 From the previous paragraph, we know that $u^k$ should satisfy the boundary condition \eqref{BCint}, related to the introduction of $h_k$ in \eqref{hk}. Furthermore, if we plug the expansion $\sum \eps^{\alpha k} u^k$ in the momentum equation, we end up with the following system in the flat domain: for all $k \ge 1$, 
\begin{equation} \label{eq_uk}
\left\{
\begin{aligned}
\pa_t u^k + u^0 \cdot \nabla u^k + u^k \cdot \nabla u^0 + \na p^k & = - \sum_{j=1}^{k-1} u^j \cdot \nabla_x u^{k-j}, \quad \mbox{in} \: \Omega_0, \\
{\rm div} \, u^k & = 0, \quad \mbox{in} \: \Omega_0,\\
u^k_2\vert_{x_2 = 0} & = h^k, \\
u^k\vert_{t < 0} & = 0, \quad \mbox{in} \: \Omega_0.
\end{aligned}
\right.
\end{equation} 
Again, the resolution of \eqref{eq_uk} is performed inductively on $k$. A necessary and sufficient condition for existence and uniqueness of a solution in $C^\infty(\R, H^\infty(\Omega_0))$ is 
\begin{equation} \label{compatibility_hk} 
\int_{\TT} h^k(t,x_1) dx_1 = 0. 
\end{equation}
More precisely, one can under condition \eqref{compatibility_hk} find a smooth $\tilde u^k$, compactly supported in variable $x_2$, such that 
\begin{equation*}
{\rm div} \, \tilde u^k = 0, \quad \tilde u_2^k\vert_{x_2=0} = h^k. 
\end{equation*}
See \cite[section III.3]{Galdi} for details. Hence, $U^k = u^k - \tilde u^k$ satisfies 
\begin{equation*}
\left\{
\begin{aligned}
\pa_t U^k + u^0 \cdot \nabla U^k + U^k \cdot \nabla u^0 + \na P^k & = F^k, \quad \mbox{in} \: \Omega_0, \\
{\rm div} \, U^k & = 0, \quad \mbox{in} \: \Omega_0,\\
U^k_2\vert_{x_2 = 0} & = 0, \\
U^k\vert_{t < 0} & = 0, \quad \mbox{in} \: \Omega_0,
\end{aligned}
\right.
\end{equation*}
with $F_k = - \sum_{j=1}^{k-1} u^j \cdot \nabla_x u^{k-j} - u^0 \cdot \na \tilde u^k - \tilde u^k \cdot \na u^0$. Finally, one shows global well-posedness of this linearized Euler system with impermeability condition (say in $H^s$ for arbitrary $s$). We do not give further details, and refer to \cite{KatoLai} for the more complex case of the nonlinear Euler equations in smooth bounded domains. 

\medskip
It remains to show that the compatibility condition \eqref{compatibility_hk} holds. From expression \eqref{hk}, we compute 
\begin{align*}
\int_\TT h_k(t,x_1) dx_1 & = \int_\TT \left( -\int_{\Omega_\bl} \pa_{x_1} v_{\bl,1}^{k - N_0}(t,x_1,z) dz - \int_{\pa \Omega_\bl} \sum_{j=0}^{k-1}B_{k-j}[u^j](t,x_1,z_1) d\sigma(z) \right) dx_1 \\
& = - \int_\TT \int_{\pa \Omega_\bl} \sum_{j=0}^{k-1}B_{k-j}[u^j](t,x_1,z_1) d\sigma(z) dx_1.
 \end{align*} 
We now recall that for any smooth divergence-free $u = u(t,x)$, expression ${ \sum \eps^{\alpha i} \langle \varepsilon^\alpha \eta'\rangle B_i[u](t,x_1,z_1)}$ comes from the Taylor expansion of $ \langle \varepsilon^\alpha \eta'\rangle u(t,x_1,\eps^{1+\alpha} \eta(z_1)) \cdot n^\eps(z_1)$ in variable $x_2 = \eps^{1+\alpha} \eta(z_1)$; see \eqref{exp-Bv}. To show that 
$$ \int_\TT \int_{\pa \Omega_\bl} B_i[u](t,x_1,z_1) d\sigma(z) dx_1= \int_\TT \int_\TT B_i[u](t,x_1,z_1)\langle \varepsilon^\alpha \eta'\rangle dz_{1} dx_1 = 0 $$
it is then enough to show that
$$ \int_\TT \int_\TT \langle \varepsilon^\alpha \eta'(z_{1})\rangle u(t,x_1,\eps^{1+\alpha} \eta(z_1)) \cdot n^\eps(z_1) dz_{1} dx_1 = \int_\TT \int_{\pa \Omega_\bl} u(t,x_1,\eps^{1+\alpha} \eta(z_1)) \cdot n^\eps(z_1) d\sigma(z) dx_1 = 0.
$$ 
But we can write 
\begin{align*}
 & \int_\TT \int_{\pa \Omega_\bl} u(t,x_1,\eps^{1+\alpha} \eta(z_1)) \cdot n^\eps(z_1) d\sigma(z) dx_1 \\ 
 = & -\int_\TT \int_{\Omega_\bl} \divz ( z \rightarrow u(t,x_1,\eps z_2) ) d z dx_1 \\
 = & -\eps \int_\TT \int_{\Omega_\bl} \pa_2 u_2(t,x_1,\eps z_2) dz dx_1 = \eps \int_\TT \int _{\Omega_\bl} \pa_1 u_1(t,x_1,\eps z_2) dz dx_1 = 0. 
\end{align*} 
The compatibility condition \eqref{compatibility_hk} is therefore satisfied, which ensures the well-posedness of systems \eqref{eq_uk}.

\subsection{Proof of Proposition~\ref{prop-Euler}}
We can now conclude the proof of the proposition. Let $s,M$ be large, and $(u^0,p^0)$ be a smooth solution of Euler in the flat domain $\Omega^0$. Following the analysis of the previous paragraph, we set 
$$ u^\app(t,x) = u^0(t,x) + u^\app_\inte(t,x) + u^\app_\bl(t,x) $$
where, for some arbitrary large $N$: 
$$ u^\app_\inte = \sum_{k=1}^N \eps^{\alpha k} u^k(t,x), \quad u^\app_\bl(t,x) = v^\app_\bl(t,x_1,x/\eps),$$ 
$$ \quad v^\app_\bl(t,x_1,z) = \sum_{k=1}^N \eps^{\alpha k} v^k_\bl(t,x_1,z) = \sum_{k=1}^N \eps^{\alpha k} \left( \na_z \psi^k_\bl + \na_z^\perp \phi^k_\bl \right)(t,x_1,z).$$
Let us note here that the first terms in $u^\app_\inte$ are zeros. Indeed, we recall that for any smooth $u$, 
 $B_k[u] = 0$ for $1 < k \le N_0$.
 Moreover, taking $k = 1$ in \eqref{hk}, we get 
\begin{equation*}
h^1(t,x_1) = - \int_{\pa \Omega_\bl} B_1[u^0](t,x_1,z_1) d\sigma(z) = u^0_1(t,x_1,0) \int_{\TT} \eta'(z_1) d z_1 = 0.
\end{equation*}
 From this, together with consideration of \eqref{hk}--\eqref{psikbl} and \eqref{eq_uk}, it follows that 
\begin{equation}\label{psivanishes}
 \psi^k_{\bl} = 0 \quad \forall 1 < k \le N_0, \quad u^k = 0 \quad \forall 1 \le k \le N_0. 
\end{equation}
Also, from \eqref{phikbl}, we deduce that 
$$ \phi^k_{\bl} = 0 \quad \forall 1 \le k \le N_0.$$
Hence, 
\begin{gather*}
 u^\app_\bl(t,x) = \eps^\alpha \na_z \psi^1_\bl(t,x_1,\frac{x}{\eps}) +\sum_{k=N_{0}+1}^N \eps^{\alpha k} v^k_\bl(t,x_1,\frac{x}\eps)\\
 u^\app_\inte(t,x) = \sum_{k=N_{0}+1}^N \eps^{\alpha k} u^k(t,x).
 \end{gather*}

As expected from the construction, the non-penetration at the boundary is almost satisfied by $u^\app$, in the following sense: 
$$ \| u^\app \cdot n^\eps \|_{W^{s',\infty}(\partial\Omega^\eps)} = \eps^{\alpha (N + 1) - s'}, \quad \forall s'. $$
The loss of a factor $\eps^{-1}$ with each order of derivation comes from the boundary layer. Similarly, the divergence-free condition is almost satisfied, in the sense that 
$$ \| {\rm div}~ u^\app \|_{W^{s',\infty}(\Omega^\eps)} = \eps^{\alpha (N+1) - 1 - s'}, \quad \forall s'. $$

\medskip
By standard arguments, see \cite[section III.3]{Galdi}, it is then possible to correct these small inhomogeneous terms: one can add a small corrector $\tilde u^\app$ so that 
$u^\app + \tilde u^\app$ is divergence-free and tangent at the boundary. Moreover, taking $N$ large enough, one can ensure that the source term created in the momentum equation by this additional corrector is arbitrarily small in $H^s$. For brevity, we do not discuss further this point, and consider that 
\begin{equation*}
 u^\app \cdot n^\eps = 0 \quad \mbox{ at } \: \pa \Omega^\eps, \quad {\rm div}~ u^\app = 0 \quad \mbox{in } \: \Omega^\eps. 
 \end{equation*} 

\medskip
It remains to check inequalities \eqref{all_bounds}. The first and second inequalities follow directly from the structure of the corrector. The third one ($L^2$ bound for the boundary layer derivatives) follows from the following basic lemma:
\begin{lemma}\label{classicallem}
 Let $\gamma$ be fixed. Then, there exists $C$ independent of $\varepsilon$ such that
 \[
 \| x\mapsto e^{\gamma x_2/\eps} f(x_{1},\frac{x}\eps) \|_{L^2(\Omega^\varepsilon)} \leq C\varepsilon^{1/2} \| e^{\gamma z_{2}} f \|_{L^2_{x}(\mathbb{T}, H^1_{z}(\Omega_{\bl}))}
 \]
 for all $f=f(x_{1},z)$.
\end{lemma}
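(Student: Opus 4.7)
The plan is to reduce the bound to a uniform trace-type estimate on $\Omega_{\bl}$, and to prove the latter by extending across the oscillating boundary via reflection and then applying a one-dimensional Sobolev embedding in the periodic direction $z_1 \in \TT$.

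First, I would apply Fubini and the change of variable $z_2 = x_2/\eps$ (with $x_1$ held fixed as a parameter) to rewrite, setting $F(x_1,z) := e^{\gamma z_2} f(x_1,z)$,
\begin{equation*}
\bigl\| e^{\gamma x_2/\eps} f(x_1, x/\eps) \bigr\|_{L^2(\Omega^\eps)}^2 \;=\; \eps \int_{\TT} \int_{\eps^\alpha \eta(x_1/\eps)}^{\infty} \bigl| F(x_1, x_1/\eps, z_2) \bigr|^2 \, dz_2 \, dx_1.
\end{equation*}
The explicit prefactor $\eps$ here is precisely what accounts for the $\eps^{1/2}$ on the right-hand side of the lemma.

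Second, the heart of the matter is to establish the following uniform trace-type inequality: for any $G \in H^1(\Omega_{\bl})$ and any $z_1^* \in \TT$,
\begin{equation*}
\int_{\eps^\alpha \eta(z_1^*)}^{\infty} |G(z_1^*, z_2)|^2 \, dz_2 \;\leq\; C \, \|G\|_{H^1(\Omega_{\bl})}^2,
\end{equation*}
with $C$ independent of $\eps$ and of $z_1^*$. Applied with $G = F(x_1, \cdot)$ for each $x_1$ and integrated against $dx_1$ on $\TT$, this directly closes the lemma.

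Third, to prove the trace inequality I would extend $G$ by reflection across the graph $\{z_2 = \eps^\alpha \eta(z_1)\}$, obtaining $\tilde G$ on $\TT \times \R_+$. Since $\alpha > 0$ and $\eta \in C^\infty(\TT)$, the Lipschitz seminorm $\eps^\alpha \|\eta'\|_{\infty}$ of this graph is uniformly bounded in $\eps$, so standard extension bounds give $\|\tilde G\|_{H^1(\TT \times \R_+)} \lesssim \|G\|_{H^1(\Omega_{\bl})}$ with an $\eps$-independent constant. Then, for each fixed $z_2$, the one-dimensional Sobolev embedding $H^1(\TT) \hookrightarrow L^\infty(\TT)$ yields
\begin{equation*}
|\tilde G(z_1^*, z_2)|^2 \leq C \int_\TT \bigl( |\tilde G|^2 + |\partial_{z_1} \tilde G|^2 \bigr)(z_1, z_2) \, dz_1
\end{equation*}
uniformly in $z_1^*$, and integrating in $z_2 \in \R_+$ produces the trace inequality.

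The only real concern is ensuring uniformity of the constants in $\eps$, which is the whole content of the lemma. This uniformity is guaranteed by the $\eps$-independent Lipschitz bound on the boundary $\eps^\alpha \eta$ (valid because $\alpha > 0$), which controls both the reflection-extension constant and the slicewise 1D Sobolev embedding.
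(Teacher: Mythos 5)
Your proposal is correct and proves the lemma, but it takes a route that is genuinely different from the paper's. Both start with the same scaling $z_2 = x_2/\eps$ to extract the factor $\eps$. From there, the paper introduces $F_{x_1}(z_1) := \int_{\eps^\alpha\eta(z_1)}^\infty |e^{\gamma z_2} f(x_1,z_1,z_2)|^2\,dz_2$, bounds the $x_1$-integrand by $\sup_{z_1} F_{x_1}(z_1)$, and applies $W^{1,1}(\TT)\hookrightarrow L^\infty(\TT)$ to $F_{x_1}$; differentiating the variable lower limit via the Leibniz rule produces a boundary term $\int_\TT \eps^\alpha|\eta'|\,|f(x_1,z_1,\eps^\alpha\eta(z_1))|^2 e^{2\gamma\eps^\alpha\eta}\,dz_1$, which they control by the ($\eps$-uniform) trace embedding $H^1(\Omega_{\bl})\hookrightarrow L^2(\partial\Omega_{\bl})$. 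You bypass the boundary term entirely by extending across the graph $\{z_2=\eps^\alpha\eta(z_1)\}$ via reflection to the fixed half-cylinder $\TT\times\R_+$, then applying the slicewise $H^1(\TT)\hookrightarrow L^\infty(\TT)$ embedding in $z_1$ and integrating in $z_2$. Both arguments derive $\eps$-uniformity from the same geometric fact — the boundary graph $\eps^\alpha\eta$ has Lipschitz constant $O(\eps^\alpha)$, which is bounded since $\alpha>0$ — but it enters the paper's proof through the boundary-trace constant and enters yours through the reflection-extension constant. Your version is a bit cleaner in that it avoids the Leibniz-rule boundary term; the paper's version is more elementary in that it never leaves $\Omega_{\bl}$ and needs only a standard trace inequality rather than a reflection construction across a curved graph.
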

\begin{proof}
 We compute
\begin{align*}
\| e^{\gamma x_2/\eps} f(x_{1},\frac{x}\eps) \|_{L^2(\Omega^\varepsilon)}^2 =& \int_{0}^1 \int_{\varepsilon^{\alpha+1}\eta(x_{1}/\eps)}^{+\infty} \Big| f(x_{1},\frac{x_{1}}\eps,\frac{x_{2}}\eps) \Big|^2 e^{2\gamma x_{2}/\eps} \, dx_{2}dx_{1}\\
=&\varepsilon \int_{0}^1 \int_{\varepsilon^{\alpha}\eta(x_{1}/\eps)}^{+\infty} \Big| f(x_{1},\frac{x_{1}}\eps,z_{2}) \Big|^2 e^{2\gamma z_{2}} \, dz_{2}dx_{1}\\
\leq & \varepsilon \int_{0}^1 \sup_{z_{1}\in \mathbb{T}} 
\Bigg[ \underbrace{ \int_{\varepsilon^{\alpha}\eta(z_{1})}^{+\infty} \Big| f(x_{1},z_{1},z_{2}) \Big|^2 e^{2\gamma z_{2}} \, dz_{2} }_{=: F_{x_{1}}(z_{1})} \Bigg] \,dx_{1} .
\end{align*}
As $W^{1,1}(\mathbb{T})$ embeds in $L^{\infty}(\mathbb{T})$, we write
\begin{align*}
 \| F_{x_{1}} \|_{L^{\infty}(\mathbb{T})} \leq C \Big(& \| F_{x_{1}} \|_{L^{1}(\mathbb{T})} + \| \partial_{z_{1}} F_{x_{1}} \|_{L^1(\mathbb{T})}\Big)\\
 \leq C \Big(& \| e^{\gamma z_{2}} f \|_{L^2_{z}(\Omega_{\bl}))}^2 + \int_{\Omega_{\bl}} 2 | f(x_{1},z_{1},z_{2}) | | \partial_{z_{1}} f(x_{1},z_{1},z_{2}) | e^{2\gamma z_{2}} \, dz\\
 &+ \int_{0}^1 \varepsilon^\alpha |\eta'(z_{1})| | f(x_{1},z_{1},\varepsilon^{\alpha}\eta(z_{1})) |^2 e^{2\gamma \varepsilon^{\alpha}\eta(z_{1})}\, dz_{1} \Big)\\
 \leq C \Big(& 2 \| e^{\gamma z_{2}} f \|_{L^2_{z}(\Omega_{\bl}))}^2 + \| e^{\gamma z_{2}} \partial_{z_{1}} f \|_{L^2_{z}(\Omega_{\bl}))}^2 + \| e^{\gamma z_{2}} f \|_{L^2_{z}(\partial\Omega_{\bl}))}^2\Big).
\end{align*}
The embedding of $H^1(\Omega_{\bl})$ into $L^2(\partial\Omega_{\bl})$ gives the result.
\end{proof}
To obtain the fourth inequality in \eqref{all_bounds}, we notice that 
$$ \curl u^\app_\bl = -\varepsilon^{\alpha(N+1-N_{0})}\partial_{x_{1}} v_{\bl,1}^{N+1-N_{0}} \: + \: \text{ lower order terms},$$ 
so that 
$$ \sup_{t\in [0,T_0]} \| e^{\gamma x_2/\eps} \eps^{|\beta|} \pa^\beta \curl u^\app_\bl\|_{L^\infty(\Omega^\eps)} \le C_0 \epsilon^{\alpha(N+1)-1} \lesssim \eps^M$$ 
for $\alpha(N+1)-1 \ge M$.
Eventually, we have to estimate the source term due to the approximation. We write the nonlinearity as 
$$ u \cdot \na u = (\curl u) \, u^\perp + \na \frac{|u|^2}{2}, \quad \text{with as usual } \curl u=\partial_{x_{1}} u_{2} -\partial_{x_{2}} u_{1} \text{ and } u^\perp=(-u_{2},u_{1}).$$
We get in particular, for some pressure $p$:
\begin{multline}
 \pa_t u^\app + \curl u^\app (u^\app)^\perp + \na p \\
= \pa_t u^\app_\bl + (\curl u_0 + \curl u^\app_\inte) (u^\app_\bl)^\perp + \curl u^\app_{\bl} (u^\app)^\perp + R^\app_\inte \label{remainder}
\end{multline}
where 
$$R^\app_\inte = \pa_t (u^0 + u^\app_\inte) + (u^0 + u^\app_\inte) \cdot \na (u^0 + u^\app_\inte) + \na (p^0 + p^\app_\inte) $$
 satisfies by construction the fifth estimate in \eqref{all_bounds}. Also, 
\begin{align*}
\| \curl u^\app_{\bl} (u^\app)^\perp \|_{W^{s,\infty}} &\leq C \| \curl u^\app_{\bl} \|_{W^{s,\infty}} ( \| u^0\|_{W^{s,\infty}} + \|u^\app_{\inte} \|_{W^{s,\infty}} + \| u^\app_{\bl} \|_{W^{s,\infty}}) \\
&\leq C \eps^{\alpha (N+1)-1-s} ( 1 + \eps^{\alpha+1} + \eps^{\alpha-s}) \leq C_{0} \varepsilon^{\alpha(N+2) -2s}.
\end{align*}
For $s,M$ given, we can choose $N$ large enough such that $\alpha(N+2) -2s\geq M$, and hence we can include $\curl u^\app_{\bl} (u^\app)^\perp $ in the definition of $R^\app_\inte$.
In order to estimate the other part in the right-hand side of \eqref{remainder}, we use the previous estimates together with 
$$\curl u_\inte^\app(t,x) = \mathcal{O}(\eps^{1 + \alpha}) \quad \mbox{in} \: W^{s,\infty}(\Omega^\eps), $$
and with the product rule
\begin{equation*}
 \|fg \|_{H^s_{\varepsilon,\gamma}} \lesssim \| f \|_{W^{s,\infty}} \| g \|_{H^s_{\varepsilon,\gamma}}, \quad \| g \|_{H^s_{\eps,\gamma}} \: := \: \sum_{|\beta| \le s} \| e^{\gamma x_2/\eps} \eps^{|\beta|} \pa^\beta g \|_{L^2(\Omega^\eps)} 
 \end{equation*}
to deduce 
\begin{align*}
 \pa_t u^\app_\bl(t,x) +& (\curl u^0 + \curl u^\app_\inte) (u^\app_\bl)^\perp(t,x) \\
= & \eps^\alpha \pa_t \na_z \psi^1_\bl(t,x_1,x/\eps) + \eps^\alpha \curl u^0 (t,x) \na_z^\perp \psi^1_\bl(t,x_1,x/\eps) + \mathcal{O}(\eps^{\alpha+\frac32}) \quad \mbox{in} \: H^s_{\eps,\gamma} \\
= & \eps^\alpha \pa_t \na_z \psi^1_\bl(t,x_1,x/\eps) + \eps^\alpha \curl u^0(t,x_1,0) \na_z^\perp \psi^1_\bl(t,x_1,x/\eps) \\ 
 & + \eps^\alpha x_2 \left( \int_0^1 \pa_2 \curl u^0(t,x_1,s x_2) ds \right) \na_z^\perp \psi^1_\bl(t,x_1,x/\eps) + \mathcal{O}(\eps^{\alpha+\frac32}) \quad \mbox{in} \: H^s_{\eps,\gamma} \\
= & \eps^\alpha \pa_t \na_z \psi^1_\bl(t,x_1,x/\eps) + \eps^\alpha \curl u^0(t,x_1,0) \na_z^\perp \psi^1_\bl(t,x_1,x/\eps) \\
 & + \eps^{1+\alpha} \left( \int_0^1 \pa_2 \curl u^0(t,x_1,s x_2) ds \right) \left( z_2 \na_z^\perp \psi^1_\bl(t,x_1,z)\right)\vert_{z = x/\eps} + \mathcal{O}(\eps^{\alpha+\frac32}) \quad \mbox{in} \: H^s_{\eps,\gamma} .
\end{align*}
For $\tilde \gamma \in (\gamma,1)$, there exists $C$ such that 
$$\| e^{\gamma z_{2}} z_2 \na_z^\perp \psi^1_\bl(t,x_1,z) \|_{H^s(\Omega_{\bl})}\leq C \| e^{\tilde\gamma z_{2}} \na_z^\perp \psi^1_\bl(t,x_1,z) \|_{H^s(\Omega_{\bl})} .$$
Hence, by Lemma~\ref{classicallem}, we have 
\[
\pa_t u^\app_\bl(t,x) + (\curl u^0 + \curl u^\app_\inte) (u^\app_\bl)^\perp(t,x) = \varepsilon^\alpha \tilde v(t,x_1,z) + \mathcal{O}(\eps^{\alpha+\frac32}) \quad \mbox{in} \: H^s_{\eps,\gamma} 
\]
with 
$$ \tilde v(t,x_1,z) = \pa_t \na_z \psi^1_\bl + \curl u^0(t,x_1) \na_z^\perp \psi^1_\bl(t,x_1,z) .$$
To conclude, we notice that $\curl_z \tilde v = 0$, because $\psi^1_\bl$ is harmonic in variable $z$. We can thus write 
$$ \tilde v(t,x_1,z) = \na_z q(t,x_1,z) $$
for a scalar function $q$. The fact that $q$ is periodic in $z_1$ follows from the fact that 
$$ F_{x_{1}}(z_{2}):= \int_\TT \pa_{z_2} \psi^1_\bl(t,x_1,z_1, z_2) dz_1 = 0 \quad \mbox{ for any } \: z_2 > \sup \eps^\alpha \eta $$
because $\partial_{z_{2}} F_{x_{1}}(z_{2}) = -\int_\TT \pa^2_{z_1} \psi^1_\bl(t,x_1,z_1, z_2) dz_1=0$ and that $\lim_{z_{2}\to \infty}F_{x_{1}}(z_{2}) =0$. Hence, 
$$ \tilde v(t,x_1,x/\eps) = \eps \na_x \left( q(t,x_1,x/\eps) \right) - \eps \pa_{x_1} (q, 0)(t,x_1,x/\eps). $$
Writing $e^{\gamma z_{2}}f(z_{2}) =-e^{\gamma z_{2}}\int_{z_{2}}^\infty f'(s)\,ds =-\int_{\R} h(z_{2}-s) e^{\gamma s}f'(s)\,ds $, with $h(t)=e^{\gamma t} \mathds{1}_{\R^-}(t)$, we deduce the following estimates: for all $p \in [1,+\infty]$,
$$ \| e^{\gamma z_{2}} f \|_{L^p(\varepsilon^\alpha \eta(z_{1}),\infty)} \leq \frac1\gamma \| e^{\gamma z_{2}} f' \|_{L^p(\varepsilon^\alpha \eta(z_{1}),\infty)}.$$
Therefore, we have $ \| e^{\gamma z_{2}} \pa_{x_1}q(t,x_{1},z) \|_{L^p_{z}(\Omega_\bl)} \lesssim \| e^{\gamma z_{2}} \pa_{x_1} \tilde v_{2} (t,x_{1},z) \|_{L^p_{z}(\Omega_\bl)}$. Differentiating more in $x_{1}$ gives similar estimates, and the derivative with respect of $z_{1}$ and $z_{2}$ are even simpler as $\partial_{z_{i}}q = \tilde v_{i}$. Hence, 
$$ \| e^{\gamma x_2/\eps} \eps^{\beta} \pa_x^\beta \pa_{x_1} q(t,x_1,x/\eps) \|_{L^\infty(\Omega^\eps)} \lesssim 1, \quad \|  e^{\gamma x_2/\eps} \eps^{\beta} \pa_x^\beta \pa_{x_1} q(t,x_1,x/\eps) \|_{L^2(\Omega^\eps)} \lesssim \eps^{1/2}. $$
To conclude, replacing $p $ in \eqref{remainder} by $p^\app(t,x) = p(t,x) - \eps^{1+\alpha} q(t,x,x/\eps)$, we find that 
$$ \pa_t u^\app + \curl u^\app (u^\app)^\perp + \na p^\app = R^\app_\inte + R^\app_\bl $$
where $R^\app_{\bl}$ satisfies the bounds of the proposition. Proposition~\ref{prop-Euler} is thus proved.

\section{Stability estimates}
Let $T_0 > 0$. Let $u^{\nu, \varepsilon}(t,x)$ be the solution to the Navier-Stokes system \eqref{NS}, and let us introduce $v$ through
$$ u^{\nu, \varepsilon}(t,x) = u^\app(t,x) + \mathsf{v}(t,x),$$
in which $u^\app$ is the Euler approximate solution constructed in Proposition~\ref{prop-Euler} (Section~\ref{sec-Euler}). Then, it is immediate that the perturbation $\mathsf{v}$ to our approximation $u^\app$ solves 
\begin{subequations}\label{prob-v}
\begin{align} 
\mathsf{v}_t + (u^\mathrm{app} + \mathsf{v})\cdot \nabla \mathsf{v} + \mathsf{v}\cdot \nabla u^\mathrm{app} + \nabla p - \nu \Delta \mathsf{v} &= R^{\nu, \varepsilon}_\mathrm{app}
\label{NSv-eqs} \\
\Div \mathsf{v} &= 0 \label{NSdiv-eqs} 
\\
\mathsf{v} \cdot n^\e &= 0 \qquad \mbox{on}\quad \pa \Omega^\epsilon \label{NSbdry-eqs}
\\
2D(\mathsf{v})n^\epsilon \cdot \tau^\epsilon + \lambda \mathsf{v} \cdot \tau^\eps&= - 2D(u^\app)n^\epsilon \cdot \tau^\epsilon- \lambda u^{\app} \cdot \tau^\eps \qquad \mbox{on}\quad \pa \Omega^\epsilon \label{NSbdry2-eqs}
\end{align}
\end{subequations}
in which we have denoted 
\begin{equation}\label{def-vRapp}
R^{\nu, \varepsilon}_\mathrm{app}: = \nu \Delta u^\mathrm{app} (t,x) - R^\mathrm{app}(t,x).
\end{equation}

One main issue to carry stability estimates is the singular dependence of the boundary layer part of $u^\app$ with respect to $\eps$: differentiation in $x$ leads to loss of powers of $\eps$. To avoid this difficulty, it is convenient to make the change of variables: 
$$ (t,x) \mapsto (\hat \tau,z):=\frac{(t,x)}{\epsilon} \quad \in\quad \tilde \Omega^\epsilon := \Big\{ z_1\in \TT_{\frac 1\e},~ z_2> \e^\alpha\eta(z_1) \Big\},$$
{\em and we will work throughout the paper with these new variables}. The main advantage of using variables $ (\hat \tau,z)$ is that differentiation in $z$ of our approximate solution $u^\app$ does not lose any power of $\eps$. {\it A contrario}, stability estimates that had to be established for $t \in [0,T_0]$ must now be established for $\hat{\tau} \in [0, T_0/\eps]$, that is on large time scales. 

\medskip
 In the domain $\tilde \Omega^\epsilon$ we use the functions $v(\hat \tau, z) = \mathsf{v}(t,x) $ and $\tilde u^\mathrm{app}(\hat \tau, z) = u^\mathrm{app}(t,x)$. Applying the change of variables, we see that $v$ solves 
\begin{subequations}\label{prob-resv}
\begin{align} 
v_{\hat \tau} + (\tilde u^\mathrm{app} + v)\cdot \nabla_z v + v\cdot \nabla_z \tilde u^\mathrm{app} + \nabla_z p - \tilde \nu \Delta_z v &= \epsilon R^{\nu, \varepsilon}_\mathrm{app}(\varepsilon\hat \tau, \varepsilon z)=\epsilon \tilde R^{\nu, \varepsilon}_\mathrm{app}(\hat \tau, z) \label{reNSv-eqs} \\
\Div_z~v &=0 \label{reNSdiv-eqs} \\
v \cdot n &= 0 \qquad \mbox{on}\quad \pa \tilde \Omega^\varepsilon \label{reNSbdry-eqs}\\
2D_z(v)n \cdot \tau+ \varepsilon\lambda v \cdot \tau &= - 2\epsilon D_x(u^\app)n \cdot \tau - \varepsilon\lambda u^\app \cdot \tau \ \mbox{on}\ \pa \tilde\Omega^\varepsilon \label{reNSbdry2-eqs}\\
&= - 2D_z(\tilde u^\app)n \cdot \tau - \varepsilon\lambda \tilde u^\app \cdot \tau \nonumber
\end{align}
\end{subequations}
where the new viscosity is defined by 
\begin{equation}\label{def-newvis} 
\tilde \nu = \frac \nu \ep.
\end{equation}
We also define the vorticities in rescaled variables,
\begin{equation}\label{curl-app}
\omega := \curl_z v, \quad \omega^\app:= \curl_z (\tilde u^\mathrm{app} )
 = \epsilon \curl u^0(\epsilon \hat \tau, \epsilon z) + \epsilon \curl u^\app_\inte(\e \hat \tau , \e z) + \epsilon \curl u^\app_\bl(\e \hat \tau , \e z) .
\end{equation}
Let us eventually mention that the stability estimates will be performed in the regime
\begin{equation}\label{nu-eps}
 \varepsilon^{N_{1}} \lesssim \tilde \nu \lesssim \varepsilon^6.
\end{equation}
 for $N_{1}\in \N^*$ fixed and arbitrary large.

\subsection{$L^2$ velocity estimates}

In this subsection, we prove the following standard energy estimate: 

\begin{proposition}\label{prop-L2} Assume that $\tilde \nu \lesssim 1$. There holds, for all $\hat{\tau} \in [0,T_0/\eps]$:
$$
\frac12 \frac{d}{d\hat \tau} \| v(\hat \tau)\|_{L^2(\tilde \Omega^\epsilon)}^2 + \tilde \nu \|\nabla v(\hat \tau) \|_{L^2(\tilde \Omega^\epsilon)}^2 \lesssim \e \| v(\hat \tau)\|_{L^2(\tilde \Omega^\epsilon)}^2 + \| v(\hat \tau)\|_{L^2(\tilde \Omega^\epsilon)} \| \omega(\hat \tau)\|_{L^2(\tilde \Omega^\epsilon)}
+ \e^{2\alpha + 2}+ \e^{2\alpha - 2} \tilde\nu^2 .$$
\end{proposition}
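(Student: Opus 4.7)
The plan is to perform a direct $L^2$ energy estimate on the perturbation equation \eqref{reNSv-eqs}, dot-multiplying by $v$ and integrating over $\tilde\Omega^\eps$. The time derivative yields $\tfrac12\tfrac{d}{d\hat\tau}\|v\|_{L^2}^2$. The nonlinear transport term $\int[(\tilde u^\app+v)\cdot\nabla_z v]\cdot v$ and the pressure term $\int \nabla_z p \cdot v$ vanish upon integration by parts, thanks to $\mathrm{div}_z v=\mathrm{div}_z \tilde u^\app=0$ and the non-penetration conditions $v\cdot n = \tilde u^\app\cdot n = 0$ on $\partial\tilde\Omega^\eps$. The main conceptual obstacle is the linear stretching term $\int (v\cdot\nabla_z \tilde u^\app)\cdot v$: even after rescaling, $\nabla_z \tilde u^\app$ has no smallness in $\eps$ because of the boundary layer, and one cannot afford a naive $\|\nabla_z \tilde u^\app\|_{L^\infty}$ bound; this forces the Lamb-form identity described below.

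For the stretching term, I would integrate by parts and use the 2D identity (valid for divergence-free $v$ tangent to $\partial\tilde\Omega^\eps$) $(v\cdot\nabla_z)v = \tfrac12\nabla_z|v|^2 + \omega\, v^\perp$ to obtain
\[
\int_{\tilde\Omega^\eps} (v\cdot\nabla_z \tilde u^\app)\cdot v \;=\; -\int_{\tilde\Omega^\eps} \tilde u^\app\cdot (v\cdot\nabla_z)v \;=\; \int_{\tilde\Omega^\eps} v \cdot (\tilde u^\app)^\perp\,\omega.
\]
Since Proposition~\ref{prop-Euler} yields $\|\tilde u^\app\|_{L^\infty}\lesssim \|u^0\|_{L^\infty}+\|u^\app_{\inte}\|_{L^\infty}+\|u^\app_{\bl}\|_{L^\infty}\lesssim 1$, this is controlled by $\|v\|_{L^2}\|\omega\|_{L^2}$, which is exactly the mixed term in the statement. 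This is the crux of the approach: it trades the singular gradient of $\tilde u^\app$ for a factor of $\omega$, and dictates the need for a companion vorticity estimate in the following subsection.

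For the viscous contribution, I would write $\Delta_z v = 2\mathrm{div}_z D_z(v)$, giving
\[
-\tilde\nu\int \Delta_z v\cdot v \;=\; 2\tilde\nu\|D_z(v)\|_{L^2}^2 - 2\tilde\nu\int_{\partial\tilde\Omega^\eps} D_z(v)n\cdot v,
\]
then substitute \eqref{reNSbdry2-eqs} and use $v=(v\cdot\tau)\tau$ on $\partial\tilde\Omega^\eps$. To convert $\|D_z(v)\|^2$ into $\|\nabla_z v\|^2$ I would apply the standard identity
\[
2\|D_z(v)\|_{L^2}^2 \;=\; \|\nabla_z v\|_{L^2}^2 + \int_{\partial\tilde\Omega^\eps} \tilde\kappa\,(v\cdot\tau)^2,
\]
valid for divergence-free fields tangent to the boundary, where $\tilde\kappa=\eps^\alpha \eta''/\langle \eps^\alpha\eta'\rangle^3$ is the rescaled curvature. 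The key benefit of working in $\tilde\Omega^\eps$ is that $\tilde\kappa$ is uniformly $O(\eps^\alpha)$ instead of blowing up. Using $|\tilde\kappa|+\eps|\lambda|+\|D_z(\tilde u^\app)\|_{L^\infty(\partial)}\lesssim \eps^\alpha$ (from Proposition~\ref{prop-Euler} and the hypothesis on $\lambda$), combined with the trace inequality $\|v\|_{L^2(\partial)}^2\lesssim \|v\|_{L^2}(\|v\|_{L^2}+\|\nabla_z v\|_{L^2})$ and Young's inequality, all boundary contributions are dominated by $\tfrac12\tilde\nu\|\nabla_z v\|_{L^2}^2$ (absorbed into the LHS) plus lower-order terms of the acceptable form $\eps\|v\|_{L^2}^2+\eps^{2\alpha+2}$.

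For the forcing term $\eps\int \tilde R^\app_{\nu,\eps}\cdot v$, I would apply Cauchy--Schwarz with weight $\eps^{1/2}$. Using $\eps\|\tilde R^\app_{\nu,\eps}\|_{L^2(\tilde\Omega^\eps)}=\|\nu\Delta u^\app - R^\app\|_{L^2(\Omega^\eps)}$ (from the change of variables) together with the bounds of Proposition~\ref{prop-Euler}, which give $\|R^\app\|_{L^2(\Omega^\eps)}\lesssim \eps^{\alpha+3/2}$ and $\nu\|\Delta u^\app\|_{L^2(\Omega^\eps)}\lesssim \nu\eps^{\alpha-3/2}=\tilde\nu\eps^{\alpha-1/2}$, a Young inequality splits this term as $\tfrac{\eps}{2}\|v\|_{L^2}^2 + C(\eps^{2\alpha+2}+\tilde\nu^2\eps^{2\alpha-2})$, matching the remainder in the proposition exactly. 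The principal difficulty is therefore conceptual rather than technical: closing a quadratic form in $v$ whose coefficient has divergent gradient is only possible by trading that gradient for $\omega$ via the Lamb identity, which explains why the full stability program must rely critically on the vorticity control developed in the subsequent sections.
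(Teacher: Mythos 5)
Your overall plan is the paper's: multiply \eqref{reNSv-eqs} by $v$ and integrate, kill the transport and pressure terms by divergence-free plus tangency, use the Lamb-form identity to rewrite the singular stretching term as $\int v\cdot(\tilde u^{\app})^{\perp}\,\omega$, and absorb the remainder $\e\tilde R^{\nu,\e}_{\app}$ via Proposition~\ref{prop-Euler}. These steps are correct and match the paper. Your detour through $\Delta_z v = 2\Div_z D_z(v)$ and the identity $2\|D_z(v)\|_{L^2}^2=\|\nabla_z v\|_{L^2}^2+\int_{\partial}\tilde\kappa(v\cdot\tau)^2$ is also legitimate; the paper just integrates $-\tilde\nu\int\Delta_z v\cdot v$ by parts once and plugs \eqref{partialvn}, yielding the same list of boundary integrals.

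The genuine gap is in the treatment of the boundary terms. You invoke the standard $H^1$-trace $\|v\|_{L^2(\partial\tilde\Omega^\e)}^2\lesssim\|v\|_{L^2}\bigl(\|v\|_{L^2}+\|\nabla_z v\|_{L^2}\bigr)$ and claim that after Young's inequality the result is $\tfrac12\tilde\nu\|\nabla_z v\|^2+\e\|v\|^2+\e^{2\alpha+2}$. This does not close under the mere hypothesis $\tilde\nu\lesssim 1$. For instance the curvature contribution is $\lesssim\tilde\nu\e^\alpha\|v\|_{L^2(\partial)}^2\lesssim\tilde\nu\e^\alpha\|v\|_{L^2}\|\nabla_z v\|_{L^2}$; absorbing $\|\nabla_z v\|$ into the dissipation $\tilde\nu\|\nabla_z v\|^2$ leaves a coefficient $\sim\tilde\nu\e^{2\alpha}\|v\|^2$, which is much larger than $\e\|v\|^2$ whenever $\alpha<1/2$ and $\tilde\nu\lesssim 1$ (and if instead you fail to absorb and use a naked Young, the coefficient is $\sim\tilde\nu^{-1}\|v\|^2$, which is even worse). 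The same issue arises from the $\tilde\nu\e^{\alpha-1/2}\|v\|_{L^2(\partial)}$ terms coming from $D_z(\tilde u^{\app})$ and $\lambda\tilde u^\app$. What makes the proposition hold with only $\tilde\nu\lesssim1$ is the \emph{refined} trace of Lemma~\ref{lem-trace2}, valid for divergence-free fields tangent to the boundary, $\|v\|_{L^2(\partial\tilde\Omega^\eps)}^2\lesssim\|v\|_{L^2}\|\curl_z v\|_{L^2}$. With this the boundary contribution is bounded directly by $\|v\|_{L^2}\|\omega\|_{L^2}+\tilde\nu^2\e^{2\alpha-1}$, no division by $\tilde\nu$ needed, and the $\|v\|\|\omega\|$ term is exactly the mixed term in the statement. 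The paper's aside about a "more classical way'' via Lemma~\ref{lem-trace} and the dissipation term only works in a regime of smaller $\tilde\nu$ (such as that of Proposition~\ref{prop-vort} or the main theorem), not under $\tilde\nu\lesssim1$. You should replace your trace inequality with that special vorticity trace for tangential divergence-free fields.
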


In view of a Gronwall lemma, let us note that the factor $\varepsilon$ in front of $\| v\|_{L^2(\tilde \Omega^\epsilon)}^2$ is crucial, because we are interested in uniform estimates for $\hat \tau \in [0,T_{0}/\varepsilon]$ which will imply uniform estimates for $t\in [0,T_{0}]$.
The rest of this subsection is dedicated to the proof of this proposition.

As usual for an energy estimate, we multiply the velocity equation by $v$ and the integration yields
\begin{equation}\label{est-L2}\begin{aligned}
\frac12 \frac{d}{d\hat \tau} \| v\|_{L^2(\tilde \Omega^\epsilon)}^2 + \tilde \nu \|\nabla v \|_{L^2(\tilde \Omega^\epsilon)}^2 &= \int_{\tilde \Omega^\epsilon} \Big[\epsilon v \cdot \tilde R^{\nu, \varepsilon}_{\mathrm{app}} - v\cdot (v\cdot \nabla \tilde u^\mathrm{app} ) \Big] -\tilde \nu \int_{\pa \tilde\Omega^\varepsilon} \partial_{n}v \cdot v d\sigma \\
&= \int_{\tilde \Omega^\epsilon} \Big[\epsilon v \cdot \tilde R^{\nu, \varepsilon}_{\mathrm{app}} - v\cdot (v\cdot \nabla \tilde u^\mathrm{app} ) \Big] -\tilde \nu \int_{\pa \tilde \Omega^\varepsilon} \partial_{n}v \cdot \tau \, v \cdot \tau d\sigma
 \end{aligned}\end{equation}
 where we recall that $n$ is the normal vector pointing inside the fluid domain and where we have used $v\cdot n=0$.

For the first term at the right-hand side, Proposition~\ref{prop-Euler} implies
\begin{equation}\label{Renuapp-L2}
\begin{aligned}
 \| \e \tilde R^{\nu,\e}_\mathrm{app} \| _{L^2 (\tilde \Omega^\epsilon)} = & \| R^{\nu,\e}_\mathrm{app} \| _{L^2 (\Omega^\epsilon)}\\
 \leq & \nu \| \Delta u^0 + \Delta u^{\app}_{\inte} \|_{L^2 (\Omega^\epsilon)} +\frac\nu{\varepsilon^2} \| \varepsilon^2 \Delta u^{\app}_{\bl} \|_{L^2 (\Omega^\epsilon)} + \| R^\mathrm{app} \| _{L^2 (\Omega^\epsilon)}\\
 \lesssim &\nu\varepsilon^{\alpha-\frac32} + \varepsilon^{\alpha+\frac32} \leq \varepsilon^{\frac12} ( \tilde\nu\varepsilon^{\alpha-1} +\varepsilon^{\alpha+1}). 
 \end{aligned}
 \end{equation}
 This bounds holds uniformly for $\hat{\tau} \in [0,\frac{T_0}{\eps}]$. It gives 
 \[
\Big| \int_{\tilde \Omega^\epsilon}\epsilon v \cdot \tilde R^{\nu, \varepsilon}_{\mathrm{app}} \Big| \lesssim \e \| v\|_{L^2(\tilde \Omega^\epsilon)}^2+ \e^{2\alpha + 2}+ \e^{2\alpha - 2} \tilde\nu^2 .
 \]

Concerning the second term, we use the relation
\[
a\cdot \nabla b + b\cdot \nabla a = \nabla (a\cdot b) + a^\perp \curl b + b^\perp \curl a
\]
to write
\begin{align*}
 \int_{\tilde \Omega^\epsilon} v\cdot (v\cdot \nabla \tilde u^\mathrm{app} ) \; dz &= \int_{\tilde \Omega^\epsilon} v\cdot \Big[ \nabla (v\cdot \tilde u^\mathrm{app}) - \tilde u^\mathrm{app} \cdot \nabla v + v^\perp \curl \tilde u^\mathrm{app} + (\tilde u^\mathrm{app})^\perp \curl v\Big] \; dz\\
 &=\int_{\tilde \Omega^\epsilon} v\cdot (\tilde u^\mathrm{app})^\perp \curl v \; dz,
\end{align*}
where we have used that $v$ and $\tilde u^\mathrm{app}$ are divergence-free and tangent to the boundary. Using that $\tilde u^\app$ is uniformly bounded (see \eqref{all_bounds}), we state
$$ \Big| \int_{\tilde \Omega^\epsilon} v\cdot (v\cdot \nabla \tilde u^\mathrm{app} ) \; dz \Big| \lesssim \| v\|_{L^2(\tilde \Omega^\epsilon)} \| \omega\|_{L^2(\tilde \Omega^\epsilon)} . $$ 
This bound, involving the vorticity, will reveal more useful than the direct one by $\| \nabla_z \tilde u^\app \|_{L^\infty} \| v \|_{L^2}^2$. Indeed, from the first two inequalities in \eqref{all_bounds}, $\| \na_z \tilde u^\app \|_{L^\infty(\tilde \Omega^\eps)} = \eps \| \na_x u^\app \|_{L^\infty(\Omega^\eps)} \lesssim \eps^{\alpha}$, and a bound by $\eps^{\alpha} \| v \|_{L^2}^2$ is not enough to conclude by a Gronwall's Lemma. 

To handle the boundary term in \eqref{est-L2}, we differentiate the boundary condition $v\cdot n=0$ tangentially to $\pa \tilde \Omega^\eps$. With $\tau = -n^\perp$, we find
$$ (\tau \cdot \na) v \cdot n + v \cdot (\tau \cdot \na) n = 0.$$
We then write $(\tau \cdot \na) n = - \kappa \tau$, where $\kappa$ is the algebraic curvature of $\pa \tilde \Omega^\eps$. We compute 
$$\begin{aligned}
 (\tau \cdot \na) v \cdot n &= (\na v)^T \tau \cdot n = \tau \cdot (\na v) n 
 \\&= 2 (D(v) n) \cdot \tau - \pa_n v \cdot \tau 
 \\&= - 2 (D(\tilde u^\app) n) \cdot \tau - \varepsilon \lambda (v+\tilde u^\app) \cdot \tau - \pa_n v \cdot \tau. \end{aligned}$$
Eventually, we obtain 
\begin{equation}\label{partialvn}
 \partial_{n} v \cdot \tau = -2 D_z (\tilde u^\app) n \cdot \tau- \varepsilon \lambda \tilde u^\app \cdot \tau -(\kappa+\varepsilon\lambda) v\cdot \tau.
\end{equation}
We compute
\begin{equation}\label{kappa}
 \kappa = \frac{ \varepsilon^\alpha \eta''}{\langle \varepsilon^\alpha \eta' \rangle^3}, \quad \text{then }\| \kappa \|_{L^{\infty}} \lesssim \varepsilon^\alpha .
\end{equation}
Moreover, in view of \eqref{all_bounds}:
\begin{equation} \label{Duapp_bord}
\begin{aligned}
 \| \tilde u^\app \| _{L^2(\pa \tilde \Omega^\varepsilon)} = \varepsilon^{-1/2} \| u^\app \| _{L^2( \pa \Omega^\varepsilon)} \lesssim \varepsilon^{-\frac12} \\
 \| D_z (\tilde u^\app) \| _{L^2(\pa \tilde \Omega^\varepsilon)} = \varepsilon^{1/2} \| D_x (u^\app) \| _{L^2( \pa \Omega^\varepsilon)} \lesssim \varepsilon^{\alpha-\frac12}.
\end{aligned}
\end{equation}
In the previous inequality and many times in the sequel, we use that for functions which are continuous up to the boundary, we have $ \| f \| _{L^\infty( \pa \Omega^\varepsilon)} \leq \| f \| _{L^\infty( \Omega^\varepsilon)} $. Note again that the previous inequalities hold uniformly for $\hat{\tau} \in [0,\frac{T_0}{\eps}]$. Assuming $|\lambda | \lesssim \varepsilon^{-1+\alpha}$, this implies
\begin{align*}
\Big| \tilde \nu \int_{\pa \tilde\Omega^\varepsilon} \partial_{n}v \cdot \tau \, v \cdot \tau d\sigma \Big|
&\lesssim \tilde \nu\Big( \| v\cdot \tau \|_{L^2(\pa \tilde\Omega^\varepsilon)}\varepsilon^{\alpha-\frac12} +\varepsilon^\alpha \| v\cdot \tau \|^2_{L^2(\pa \tilde\Omega^\varepsilon)} \Big)\\
&\lesssim (1+ \tilde\nu \varepsilon^\alpha) \| v\cdot \tau \|^2_{L^2(\pa \tilde\Omega^\varepsilon)} + \tilde\nu^2\varepsilon^{2\alpha-1}
\end{align*}
which gives (by a trace lemma: Lemma~\ref{lem-trace2})
\[
\Big| \tilde \nu \int_{\pa \tilde\Omega^\varepsilon} \partial_{n}v \cdot \tau \, v \cdot \tau d\sigma \Big|
\lesssim \| v\|_{L^2(\tilde \Omega^\epsilon)} \| \omega\|_{L^2(\tilde \Omega^\epsilon)} + \tilde \nu^2\varepsilon^{2\alpha-1}
\]
where we have used $\tilde \nu \lesssim 1$. Note that we could also estimate this boundary term in a more classical
way by using the trace lemma~\ref{lem-trace} and the energy dissipation term in the estimate \eqref{est-L2}. This ends the proof of Proposition~\ref{prop-L2}.

\subsection{$L^2$ vorticity estimate}

From the estimate of Proposition~\ref{prop-L2}, it is clear that we need an estimate for the $L^2$ norm of the vorticity $\omega = \curl_z v$. We first observe that it solves 
\begin{equation}\label{eqs-resw}
\partial_{\hat \tau} \omega + (\tilde u^\mathrm{app} + v)\cdot \nabla_z \omega + v\cdot \nabla_z \omega^\mathrm{app} = \tilde \nu \Delta \omega + \e \curl_z \tilde R^{\nu, \varepsilon}_\mathrm{app}.
\end{equation}
Moreover, by writing 
$$ \pa_n v \cdot \tau = D(v)n \cdot \tau + \frac{1}{2} \omega n^\perp \cdot \tau = - D(\tilde u^\app)n \cdot \tau - \frac{\varepsilon \lambda}2 (v+\tilde u^\app) \cdot \tau - \frac{1}{2} \omega, $$
identity \eqref{partialvn} yields the Dirichlet condition 
\begin{equation}\label{BC-resw}
\omega = 2 D_z (\tilde u^\app) n \cdot \tau + \varepsilon \lambda \tilde u^\app \cdot \tau + (2 \kappa+\varepsilon\lambda) v\cdot \tau .
\end{equation} 
 
As we have 
$$\begin{aligned}
 \e \curl_z \tilde R^{\nu, \varepsilon}_\mathrm{app} 
& = \e \curl_z\Big( \nu \Delta_x u^\mathrm{app} (\e \hat \tau, \e z) - R^\mathrm{app}(\e\hat \tau,\e z)\Big) 
\\&= \e^2 \curl_x\Big( \nu \Delta_x u^\mathrm{app} - R^\mathrm{app}\Big) (\e \hat \tau, \e z),
\end{aligned}$$
we deduce from Proposition~\ref{prop-Euler} that
\begin{align}
\| \e \curl_z \tilde R^{\nu, \varepsilon}_\mathrm{app} \|_{L^2_z(\tilde \Omega^\e)} 
&\lesssim \nu \varepsilon \| D^2_x \curl_x u^\app \|_{L^2_x( \Omega^\e)} +\varepsilon \| D_x R^\app \|_{L^2_x( \Omega^\e)} \nonumber \\
&\lesssim \nu \varepsilon \| u^0 + u^\app_{\inte} \|_{H^3( \Omega^\e)}+\nu \varepsilon^{-1} \| \eps^2 D^2_x \curl_x u^\app_\bl \|_{L^2( \Omega^\e)}+ \varepsilon \| R^\app_{\inte} \|_{H^1( \Omega^\e)} \nonumber\\ & + \| \eps D_x R^\app_{\bl} \|_{L^2( \Omega^\e)}
\lesssim \tilde\nu\varepsilon^2 + \e^{\alpha+\frac32} , \label{L2-Rappw} 
\end{align}
which is in particular smaller than $\e^{\alpha+\frac32}$ (for instance, when $\tilde \nu\lesssim 1$ and $\alpha\leq 1/2$, or $\tilde \nu\lesssim \varepsilon^{1/2}$).

In this section, we shall derive the following key estimates. 

\begin{proposition}\label{prop-vort} Assume that $\tilde \nu \lesssim \varepsilon^4$. 
If 
\begin{equation}\label{assmp-mT}
\sup_{0\le \hat \tau\le T/\varepsilon}\|\nabla_z v (\hat \tau)\|_{L^\infty(\tilde \Omega^\epsilon)}
 \leq 1\quad \text{with} \quad T\leq T_{0},
\end{equation}
then, we have (for $\varepsilon$ small enough) the uniform velocity bound 
\begin{equation} \label{vLinftyL2}
\sup_{0\le \hat \tau \leq T/\varepsilon} \e \| v (\hat \tau)\|_{L^2(\tilde \Omega^\epsilon)} + \tilde \nu^{\frac12} \eps \| \nabla v \|_{L^2((0,T/\varepsilon)\times \tilde \Omega^{\e})}\lesssim \e^{\alpha+\frac12}+ \tilde \nu^{1/4} \e^{\alpha - 1} ,
\end{equation}
and the uniform vorticity bounds 
\begin{gather*}
 \sup_{0\le \hat \tau \leq T/\varepsilon} \| \omega (\hat \tau )\|_{L^2(\{z_2 - \e^\alpha \eta(z_1)\gtrsim \sqrt{\tilde \nu}\})} \lesssim \e^{\alpha+\frac12}+ \tilde \nu^{1/4} \e^{\alpha - 1}, \\
 \|\omega\|_{L^2(0, T/\varepsilon; L^2(\tilde \Omega^\epsilon))} 
\lesssim 
 \e^{\alpha}+ \tilde \nu^{1/4} \e^{\alpha - \frac32}.
\end{gather*}
\end{proposition}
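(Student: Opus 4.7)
The plan is to upgrade the velocity energy inequality of Proposition~\ref{prop-L2}, which involves $\|\omega\|_{L^2}$, with a companion $L^2$ bound for $\omega$, and to close the loop by Gronwall's lemma on the long time interval $[0,T_0/\varepsilon]$. The main obstruction is that $\omega$ does not vanish on $\partial\tilde\Omega^\varepsilon$: its Dirichlet trace \eqref{BC-resw} is of order $\varepsilon^\alpha$ (plus a lower-order contribution from $v\cdot\tau$). A bare energy estimate on \eqref{eqs-resw} therefore fails, and we instead localize away from the boundary on the viscous scale $\sqrt{\tilde\nu}$. Let $d(z):=z_2-\varepsilon^\alpha\eta(z_1)$ denote the distance to $\partial\tilde\Omega^\varepsilon$, and define $\varphi(z):=\chi(d(z)/\sqrt{\tilde\nu})$ where $\chi\in C^\infty([0,\infty);[0,1])$ satisfies $\chi(0)=0$ and $\chi\equiv 1$ on $[1,\infty)$.

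\textbf{Weighted vorticity estimate.} We test \eqref{eqs-resw} against $\varphi^2\omega$. The diffusion term gives
$$ \tilde\nu\int\varphi^2\omega\,\Delta\omega = -\tilde\nu\|\varphi\nabla\omega\|_{L^2}^2 - 2\tilde\nu\int\varphi\omega\,\nabla\varphi\cdot\nabla\omega, $$
the boundary trace vanishing since $\varphi|_{\partial\tilde\Omega^\varepsilon}=0$; the cross term is absorbed in $\tfrac12\tilde\nu\|\varphi\nabla\omega\|_{L^2}^2$ at the cost of a contribution of order $\int_{\mathrm{strip}}|\omega|^2$, where $\mathrm{strip}:=\{0\le d\lesssim\sqrt{\tilde\nu}\}$. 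For the convective term, integrating by parts using $\Div_z(\tilde u^{\app}+v)=0$ and $(\tilde u^{\app}+v)\cdot n=0$ produces $\tfrac12\int\nabla\varphi^2\cdot(\tilde u^{\app}+v)\,\omega^2$; since $(\tilde u^{\app}+v)\cdot n$ vanishes on $\partial\tilde\Omega^\varepsilon$ and $\|\nabla v\|_{L^\infty}\le 1$ by \eqref{assmp-mT}, a first order Taylor expansion gives $|(\tilde u^{\app}+v)\cdot\nabla d|\lesssim d$ on the strip and hence $|\nabla\varphi^2\cdot(\tilde u^{\app}+v)|\lesssim 1$ uniformly, so this contribution is again bounded by $\int_{\mathrm{strip}}|\omega|^2$. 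The stretching term $v\cdot\nabla_z\omega^{\app}$ is small thanks to \eqref{curl-app} and the first two lines of \eqref{all_bounds}, which yield $\|\nabla_z\omega^{\app}\|_{L^\infty}=O(\varepsilon^2)+O(\varepsilon^{M})$, whence $|\int v\cdot\nabla\omega^{\app}\,\varphi^2\omega|\lesssim\varepsilon^2\|v\|_{L^2}\|\omega\|_{L^2}$, and the source is treated by \eqref{L2-Rappw}.

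\textbf{Strip estimate and closure.} The remaining ingredient is the control of $\int_{\mathrm{strip}}|\omega|^2$. Writing $\omega(z_1,z_2)=\omega(z_1,\varepsilon^\alpha\eta(z_1))+\int_{\varepsilon^\alpha\eta(z_1)}^{z_2}\partial_{z_2}\omega\,ds$ and integrating over the strip of thickness $\sqrt{\tilde\nu}$ and length $\lesssim\varepsilon^{-1}$, one obtains
$$ \int_{\mathrm{strip}}|\omega|^2\lesssim \sqrt{\tilde\nu}\,\|\omega\|_{L^2(\partial\tilde\Omega^\varepsilon)}^2+\tilde\nu\,\|\varphi\nabla\omega\|_{L^2}^2, $$
the last term being reabsorbed by the dissipation. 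The trace \eqref{BC-resw} combined with \eqref{kappa}, \eqref{Duapp_bord} and $|\lambda|_{C^2}\lesssim\varepsilon^{-1+\alpha}$ yields $\|\omega\|_{L^2(\partial\tilde\Omega^\varepsilon)}^2\lesssim\varepsilon^{2\alpha-1}+\varepsilon^{2\alpha}\|v\cdot\tau\|_{L^2(\partial\tilde\Omega^\varepsilon)}^2$, and the boundary velocity norm is handled by a standard trace inequality (Lemma~\ref{lem-trace2}). Adding the resulting weighted vorticity estimate to Proposition~\ref{prop-L2}, using the splitting $\|\omega\|_{L^2}\leq\|\varphi\omega\|_{L^2}+\|\omega\|_{L^2(\mathrm{strip})}$ in the cross-term $\|v\|_{L^2}\|\omega\|_{L^2}$, produces a closed differential inequality for $\mathcal{E}(\hat\tau):=\|v(\hat\tau)\|_{L^2}^2+\|\varphi\omega(\hat\tau)\|_{L^2}^2$, of the schematic form
$$ \tfrac{d}{d\hat\tau}\mathcal{E}+\tilde\nu\bigl(\|\nabla v\|_{L^2}^2+\|\varphi\nabla\omega\|_{L^2}^2\bigr)\lesssim \varepsilon\,\mathcal{E}+\varepsilon^{2\alpha+2}+\tilde\nu^{1/2}\varepsilon^{2\alpha-2}. $$
Gronwall on $[0,T_0/\varepsilon]$ then gives the first two estimates of the proposition, the $O(1)$ exponential factor being guaranteed by the crucial $\varepsilon$-prefactor in Proposition~\ref{prop-L2}. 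The third bound, on $\|\omega\|_{L^2_{\hat\tau}L^2_z}$, is obtained by time-integrating the dissipative quantity $\tilde\nu\|\varphi\nabla\omega\|_{L^2}^2$, adding the time integral of the strip estimate, and dividing by the appropriate power of $\tilde\nu$.

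\textbf{Main obstacle.} The most delicate point is ensuring that the Dirichlet contribution $(2\kappa+\varepsilon\lambda)\,v\cdot\tau$ in \eqref{BC-resw}, together with the viscous remainder $\tilde\nu\,\Delta u^{\app}$ in \eqref{def-vRapp}, do not break the Gronwall closure. This is precisely where the hypothesis $\tilde\nu\lesssim\varepsilon^4$ intervenes: it is the threshold at which all the $\tilde\nu$-dependent remainders (strip contribution, viscous forcing, cross-derivative error in the weighted estimate) are dominated by the terms already present in the velocity energy identity. The explicit factor $\tilde\nu^{1/4}\varepsilon^{\alpha-1}$ in the final bound then arises mechanically: $\sqrt{\tilde\nu}$ from the strip width, $\varepsilon^{-1/2}$ from the boundary length, and $\varepsilon^\alpha$ from the size of $\omega|_{\partial\tilde\Omega^\varepsilon}$. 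We also emphasize that the entire argument is an a priori estimate conditional on \eqref{assmp-mT}: the bound $\|\nabla v\|_{L^\infty}\le 1$ is genuinely used both to Taylor-expand $(\tilde u^{\app}+v)\cdot n$ in the transport cross-term and to control the $v\cdot\tau$ contribution to the Dirichlet data; its recovery by a bootstrap argument is the object of a later section.
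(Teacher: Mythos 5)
Your strategy — use a weight vanishing at the boundary in the vorticity energy, then combine with Proposition~\ref{prop-L2} and Gronwall over $[0,T_0/\varepsilon]$ — is the paper's strategy, but your choice of weight $\varphi=\chi(d/\sqrt{\tilde\nu})$ does not close the argument, because of the strip estimate. The Newton--Leibniz/Cauchy--Schwarz argument in fact gives
\[
\int_{\mathrm{strip}}|\omega|^2\;\lesssim\;\sqrt{\tilde\nu}\,\|\omega\|_{L^2(\partial\tilde\Omega^\varepsilon)}^2
\;+\;\tilde\nu\!\int_{\mathrm{strip}}|\nabla\omega|^2,
\]
with an \emph{unweighted} gradient over the strip, not the claimed $\tilde\nu\|\varphi\nabla\omega\|_{L^2}^2$. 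Since $\varphi$ climbs from $0$ to $1$ across the whole strip, the dissipation $\tilde\nu\|\varphi\nabla\omega\|_{L^2}^2$ gives no control over $\tilde\nu\int_{\mathrm{strip}}|\nabla\omega|^2$, so the term cannot be reabsorbed as you assert; and since this quantity is exactly what your bound for the convective term $\tfrac12\int(\tilde u^{\app}+v)\cdot\nabla\varphi^2\,\omega^2$ reduces to, the whole weighted estimate is left with an uncontrolled $\int_{\mathrm{strip}}|\omega|^2$. The paper avoids this via Lemma~\ref{lem-phi}: the weight $\phi$ is designed with $|\nabla\phi|\lesssim 1$ and $\nabla\phi\cdot n\gtrsim 1$ on $\{d\lesssim\sqrt{\tilde\nu}\}$, so that $\int_{\mathrm{strip}}\omega^2\lesssim\int\nabla\phi\cdot n\,\omega^2$, and the latter is controlled by an integration by parts (no boundary trace since $\phi|_\Gamma=0$) landing in the \emph{weighted} norms $\tilde\nu^{-1/2}\|\omega\|_{L^2_\phi}^2+\tilde\nu^{1/2}\|\nabla\omega\|_{L^2_\phi}^2$ (estimate \eqref{omegaphi}). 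With $\varphi$, $\nabla\varphi\cdot n\sim\tilde\nu^{-1/2}$ in the strip, so no analogue is available. Moreover, the paper's $\phi$ is not just a cutoff: it is the primitive of a Gaussian chosen so that $m\tilde z_2\phi'+\tilde\nu\phi''=0$, forcing the drift–diffusion term $\int[(\tilde u^{\app}+v)\cdot\nabla\phi+\tilde\nu\Delta\phi]\,|\omega|^2$ all the way down to $O(\tilde\nu\varepsilon^\alpha)\|\omega\|_{L^2}^2$; your cutoff produces no such cancellation.

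A second, more mechanical, gap: with $\mathcal{E}=\|v\|_{L^2}^2+\|\varphi\omega\|_{L^2}^2$ both terms are top-order, so the cross-term $\|v\|_{L^2}\|\omega\|_{L^2}$ from Proposition~\ref{prop-L2} can only be bounded by $\mathcal{E}$, not $\varepsilon\mathcal{E}$, and Gronwall over $[0,T_0/\varepsilon]$ would produce a factor $e^{CT_0/\varepsilon}$. The paper's combined quantity $\mathcal{N}=\tilde\nu^{1/2}\varepsilon^2\|v\|_{L^2}^2+\|\omega\|_{L^2_\phi}^2$ is deliberately weighted so that the Young split of the cross-term with parameter $\varepsilon$ gives $\varepsilon\mathcal{N}$ plus a dissipation-absorbable piece, using once more the conversion \eqref{omegaphi} — so the second gap is not independent of the first.
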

\begin{remark}
Note that the condition $\tilde \nu \lesssim \varepsilon^4$ implies $\e \| v \|_{L^2} \ll 1$. The proposition and the fact that $\e \| v \|_{L^2} \ll 1$ actually hold under the weaker assumption $\tilde \nu \ll \varepsilon^{4-\frac\alpha4}$. As it is not a significant improvement ($\alpha=1/N_{0}$ can be arbitrary small), we keep the assumption $\tilde \nu \lesssim \varepsilon^4$ for simplicity.
\end{remark}

The rest of this section is dedicated to the proof of Proposition~\ref{prop-vort}.

\subsubsection{Weighted estimates}

Let $\phi = \phi(z)$ be some non-negative and bounded weight function, to be determined below, so that $\phi =0$ on the boundary $\Gamma $. We shall use $\phi \omega$ as a test function. Note that 
there is no boundary condition term appearing, when taking integration by parts with $x_1$ derivatives due to the periodicity assumption. Thus, multiplying the vorticity equation by $\phi \omega$, we get 
\begin{equation}\label{id-L2w}\begin{aligned}
 \frac 12 &\frac{d}{d\hat \tau} \int \phi |\omega|^2 + \tilde \nu \int \phi |\nabla \omega|^2 
 \\&= \int\Big[ \frac 12(\tilde u^\mathrm{app} + v)\cdot (\nabla \phi) |\omega|^2 - \phi \omega v \cdot \nabla \omega^\mathrm{app} - \tilde \nu \nabla \phi \cdot \nabla \omega \omega + \e \phi \omega\curl \tilde R^{\nu, \varepsilon}_\mathrm{app} \Big] 
 \\&= \int\Big[ (\tilde u^\mathrm{app} + v)\cdot \nabla \phi+\tilde \nu \Delta\phi \Big] \frac{|\omega|^2}{2} - \int\Big[ \phi \omega v \cdot \nabla \omega^\mathrm{app} - \e\phi \omega\curl \tilde R^{\nu, \varepsilon}_\mathrm{app} \Big] + \frac {\tilde \nu} 2 \int_{\pa \tilde \Omega^\eps } \frac{\partial \phi}{\partial n} |\omega|^2.
 \end{aligned}\end{equation}
The most dangerous term is the convection term and we set 
\begin{equation}\label{def-Gnorm}
\begin{aligned}
m(T): &= 1+ \sup_{0\le \hat \tau\le T/\varepsilon}\|\nabla_z (\tilde u^\app + v)(\hat \tau)\|_{L^\infty(\tilde \Omega^\epsilon)} .
\end{aligned}
\end{equation} 
This is where the choice of our weight function $\phi$ comes in.

\begin{remark}\label{rem-mT}
 As $\|\nabla_z \tilde u^\app \|_{L^\infty(\tilde \Omega^\epsilon)}=\varepsilon\|D_x u^\app \|_{L^\infty( \Omega^\epsilon)}$, it follows from \eqref{all_bounds} and assumption \eqref{assmp-mT} that $m(T)\lesssim 1$.
\end{remark}

\begin{lemma}[Construction of weight functions]\label{lem-phi}
If $m(T) \lesssim 1$, there exists a non-negative weight function $\phi = \phi(z)$ so that 
$$ \phi(z) =0, \qquad \mbox{on}\quad \pa \tilde \Omega^\eps = \{z_{1}\in \mathbb{T}_{\frac1\varepsilon},\ z_2 = \e^\alpha \eta(z_1)\}$$
 $$ \phi \lesssim \sqrt{\tilde \nu}, \qquad |\nabla \phi|\lesssim 1, \qquad (\tilde u^\mathrm{app} + v) \cdot \nabla \phi+ \tilde \nu \Delta\phi \lesssim \tilde \nu \e^\alpha \: \text{ over } (0,T),$$
in $ \tilde \Omega^\epsilon$. In addition, when $z_2 - \e^\alpha\eta(z_1) \gtrsim \sqrt {\tilde \nu} $, we have $\phi(z) \gtrsim \sqrt{\tilde \nu}.$
\end{lemma}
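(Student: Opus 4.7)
The plan is to look for $\phi$ of the form $\phi(z) = \sqrt{\tilde\nu}\,\Phi\bigl(\rho(z)/\sqrt{\tilde\nu}\bigr)$, where $\rho(z) := z_2 - \eps^\alpha \eta(z_1)$ is a signed height above the rough boundary (vanishing on $\partial\tilde\Omega^\eps$) and $\Phi:[0,\infty)\to[0,\infty)$ is a scalar profile to be determined. The motivation is geometric: the vorticity boundary layer has thickness $\sqrt{\tilde\nu}$, and since $\tilde u^\app + v$ is tangent to $\partial\tilde\Omega^\eps$ while $\nabla\rho$ is parallel to the inward normal there, the convective contribution $\Phi'(\rho/\sqrt{\tilde\nu})\,(\tilde u^\app+v)\cdot\nabla\rho$ is forced to vanish on $\partial\tilde\Omega^\eps$ and can be made to cancel the leading viscous term by the right choice of $\Phi$.

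The first step is a Lipschitz-type bound
\begin{equation*}
 \bigl|(\tilde u^\app+v)\cdot\nabla\rho\bigr|(z) \,\leq\, L\,\rho(z), \qquad z\in\tilde\Omega^\eps,
\end{equation*}
with a constant $L=O(1)$. Writing $(\tilde u^\app+v)\cdot\nabla\rho = -\eps^\alpha\eta'(z_1)(\tilde u^\app+v)_1 + (\tilde u^\app+v)_2$ and subtracting its value at the foot point $(z_1,\eps^\alpha\eta(z_1))$ (which vanishes thanks to $(\tilde u^\app+v)\cdot n=0$), a one-dimensional Taylor expansion in $z_2$ reduces the claim to a control of $\|\partial_{z_2}(\tilde u^\app+v)\|_{L^\infty}$ multiplied by $\rho$; the prefactor $\langle\eps^\alpha\eta'\rangle$ contributes only a harmless bounded constant. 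Thus the hypothesis $m(T)\lesssim 1$ is enough to give $L\lesssim 1$.

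Next, with $s := \rho/\sqrt{\tilde\nu}$ a direct computation yields
\begin{equation*}
 (\tilde u^\app+v)\cdot\nabla\phi + \tilde\nu\,\Delta\phi \,=\, \Phi'(s)(\tilde u^\app+v)\cdot\nabla\rho + \sqrt{\tilde\nu}\,\Phi''(s)|\nabla\rho|^2 + \tilde\nu\,\Phi'(s)\Delta\rho.
\end{equation*}
Using the step-one bound together with $|\nabla\rho|^2\geq 1$ and requiring $\Phi'\geq 0$, $\Phi''\leq 0$, the sum of the first two terms is at most $\sqrt{\tilde\nu}\bigl[Ls\Phi'(s)+\Phi''(s)\bigr]$. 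I therefore take $\Phi$ to solve the ODE $\Phi''(s)+Ls\Phi'(s)=0$ with $\Phi(0)=0$, $\Phi'(0)=1$, which gives the explicit Gaussian integral $\Phi(s)=\int_0^s e^{-Lt^2/2}\,dt$. This $\Phi$ is smooth, strictly increasing, concave, bounded by $\sqrt{\pi/(2L)}$, and satisfies $\Phi(s)\geq \Phi(1)>0$ for $s\geq 1$. The residual term $\tilde\nu\,\Phi'(s)\Delta\rho = -\tilde\nu\,\Phi'(s)\eps^\alpha\eta''(z_1)$ is bounded by $C\tilde\nu\eps^\alpha$, which delivers the required upper bound. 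The pointwise properties are then read off $\phi$ directly: $\phi|_{\partial\tilde\Omega^\eps}=0$ from $\Phi(0)=0$; $\phi\lesssim\sqrt{\tilde\nu}$ from boundedness of $\Phi$; $|\nabla\phi|=\Phi'(s)|\nabla\rho|\lesssim 1$; and $\phi\gtrsim\sqrt{\tilde\nu}$ on $\{\rho\gtrsim\sqrt{\tilde\nu}\}$ by the monotonicity and lower bound of $\Phi$ on $[1,\infty)$.

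The main obstacle will be step one: securing $L=O(1)$ uniformly as $\eps\to 0$, which relies on the smallness of $\nabla^2\rho=O(\eps^\alpha)$ and on all first derivatives of $\tilde u^\app+v$ being controlled by $m(T)$. Once this is in place, the choice $\Phi''+Ls\Phi'=0$ is perfectly tailored so that the two $O(\sqrt{\tilde\nu})$ contributions cancel pointwise, leaving only the curvature-type remainder $\tilde\nu\eps^\alpha$.
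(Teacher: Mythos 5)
Your proposal is correct and is essentially the proof in the paper: both pass to the boundary-fitted coordinate $\rho(z)=z_2-\eps^\alpha\eta(z_1)$, take a weight depending only on $\rho$, bound the transport term by $m(T)\,\rho$ (your Lipschitz constant $L$ is a benign $\langle\eps^\alpha\eta'\rangle$-multiple of $m(T)$), cancel it against the leading viscous term by taking the Gaussian profile solving $\Phi''+Ls\Phi'=0$, and identify the leftover $\tilde\nu\,\Phi'\,\Delta\rho=-\tilde\nu\,\Phi'\,\eps^\alpha\eta''=O(\tilde\nu\eps^\alpha)$ curvature residual. The only cosmetic difference is that you write $\phi=\sqrt{\tilde\nu}\,\Phi(\rho/\sqrt{\tilde\nu})$ with a normalized profile and use $|\nabla\rho|^2\ge1$ together with $\Phi''\le0$ to absorb the Jacobian factor, whereas the paper keeps $\phi=\phi(\tilde z_2)$ and pulls the common factor $1+\eps^{2\alpha}|\eta'|^2$ out of the first two terms; these routes are equivalent.
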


\begin{proof} We recall that on $\pa \tilde \Omega^\eps $ the normal and tangential directions are defined by 
\begin{equation}\label{def-ntau}
 \tau = \frac{(1,\e^\alpha \eta'(z_1))}{\langle \varepsilon^\alpha \eta'\rangle }, \qquad n = \frac{(-\e^\alpha \eta'(z_1), 1)}{\langle \varepsilon^\alpha \eta'\rangle } , \qquad \langle \varepsilon^\alpha \eta'\rangle =\sqrt{1+\e^{2\alpha }|\eta'(z_1)|^2},
 \end{equation}
in which $\eta(z_1)$ is one-periodic in $z_1$. We shall work with the new variables:
\begin{equation}\label{def-newz} \tilde z_1 = z_1, \qquad \tilde z_2 = z_2 - \e^\alpha \eta(z_1).\end{equation}
In this new variables, we note that 
\begin{equation*}
\nabla_z = \begin{pmatrix} 1& -\e^\alpha \eta' \\ 0& 1\end{pmatrix} \nabla_{\tilde z}, \qquad \Delta_z = \partial_{\tilde z_1}^2 + (1+\e^{2\alpha}|\eta' |^2)\partial_{\tilde z_2}^2 - 2\e^\alpha \eta' \partial_{\tilde z_1}\partial_{\tilde z_2} - \e^\alpha \eta'' \partial_{\tilde z_2}.\end{equation*}

We shall take our weight function $ \phi = \phi(\tilde z_2),$ and hence, 
$$\begin{aligned}
( \tilde u^\mathrm{app} + v)\cdot \nabla \phi+ \tilde \nu \Delta\phi &= \langle \e^\alpha \eta' \rangle ( \tilde u^\mathrm{app} + v) \cdot n \phi' + \tilde \nu (1+\e^{2\alpha }|\eta'|^2) \phi'' - \tilde \nu \e^\alpha \eta'' \phi'.
 \end{aligned}$$ 
Recall that $( \tilde u^\mathrm{app} + v) \cdot n = 0$ on $\{\tilde z_2 = 0\}$. By definition of $m(T)$, it follows that 
$$ |(\tilde u^\mathrm{app} + v)\cdot n| \le m(T) \tilde z_2 \quad\text{and}\quad \langle \e^\alpha \eta' \rangle ( \tilde u^\mathrm{app} + v) \cdot n \leq \langle \e^\alpha \eta' \rangle^2 m(T) \tilde z_2 .$$
Using this into the above identity, we get (if we prove later that $\phi'\geq0$)
$$\begin{aligned}
( \tilde u^\mathrm{app} + v) \cdot \nabla \phi+ \tilde \nu \Delta\phi &\le (1+ \e^{2\alpha }|\eta'|^2) \Big[ m(T) \tilde z_2 \phi' + \tilde \nu \phi'' \Big] - \tilde \nu \e^\alpha \eta'' \phi' . 
 \end{aligned}$$ 
We then choose the function $\phi$ so that $m(T) \tilde z_2 \phi' + \tilde \nu \phi'' = 0$, or equivalently, we can take 
$$\phi' (z)=Ce^{-\frac{m(T) \tilde z_{2}^2}{2\tilde\nu}},\qquad \phi (z)= C\sqrt{\frac{\tilde \nu}{m(T)}} \int_{0}^{ \sqrt{m(T) \over \tilde \nu} \tilde z_2} e^{- {s^2 \over 2}}\, ds,$$
for $C$ to be fixed. 
Hence, for $C=\sqrt{m(T)}$, we verify that $\phi'\geq0$ and we have
$$ ( \tilde u^\mathrm{app} + v)\cdot \nabla \phi+ \tilde \nu \Delta\phi \lesssim \tilde \nu \e^\alpha \sqrt{m(T)}.$$
Other properties of $\phi$ stated in the lemma follow directly. This proves the existence of the weight function $\phi$ as claimed. 
\end{proof}

With the weight function $\phi$ constructed in Lemma~\ref{lem-phi}, let us now estimate each term on the right of the identity \eqref{id-L2w}. First, by construction, we have 
\begin{equation*} \int\Big[ (\tilde u^\mathrm{app} + v) \cdot \nabla \phi+\tilde \nu \Delta\phi \Big] \frac{|\omega|^2}{2} \lesssim \tilde \nu \e^\alpha \|\omega \|_{L^2(\tilde \Omega^\epsilon)}^2.\end{equation*}
Next, for convenience, let us denote the weighted norm:
$$ \| v\|_{L^p_\phi(\tilde \Omega^\epsilon)}: = \|\phi^{1/p} v\|_{L^p(\tilde \Omega^\epsilon)}, \qquad p\ge 1.$$
Thanks to the estimate \eqref{L2-Rappw} and Lemma~\ref{lem-phi}, we have 
\begin{equation*}
\begin{aligned}
 \e \int \phi \omega\curl \tilde R^{\nu, \varepsilon}_\mathrm{app} &\lesssim \| \omega \|_{L^2_\phi} \| \phi\|_{L^\infty}^{1/2} \| \e \curl_z \tilde R^{\nu, \varepsilon}_\mathrm{app}\|_{L^2}
 \lesssim \tilde \nu^{1/4} ( \tilde\nu\varepsilon^2 + \e^{\alpha+\frac32}) \| \omega \|_{L^2_\phi}.
 \end{aligned}\end{equation*}
Next, using the boundary condition on $\omega$ \eqref{BC-resw}, the trace lemma~\ref{lem-trace2} and \eqref{Duapp_bord}, we estimate
\begin{equation*}\begin{aligned}
\frac {\tilde \nu} 2 \int_{\pa \tilde \Omega^\eps } \frac{\partial \phi}{\partial n} |\omega|^2 
&\lesssim \tilde \nu \|\omega\|_{L^2 (\pa \tilde \Omega^\eps )}^2
\lesssim \tilde \nu \Big[ \e^{2\alpha} \| v\cdot \tau \|_{L^2(\pa \tilde \Omega^\eps )}^2 + \e^{2\alpha-1} \Big]
 \\
 &\lesssim \tilde \nu \Big[ \e^{2\alpha} \| v \|_{L^2(\tilde \Omega^\epsilon)} \|\omega\|_{L^2(\tilde \Omega^\epsilon)} + \e^{2\alpha-1}\Big].
 \end{aligned}\end{equation*}
Note that here, the use of the refined trace lemma~\ref{lem-trace2} so that we have $\omega$ and not $\nabla v$ in the right
hand-side is crucial.

Finally, by recalling that $\nabla_{z} \omega^\mathrm{app}=\varepsilon^2(\nabla_{x}\curl_{x} u^\app )(\varepsilon\hat\tau,\varepsilon z)$ (see \eqref{curl-app}), the last integral can be estimated thanks to \eqref{all_bounds}:
\begin{equation*}
\Big| \int \phi \omega v \cdot \nabla \omega^\mathrm{app} \Big| \lesssim \e^2 \tilde\nu^{1/4} \| \omega \|_{L^2_\phi (\tilde \Omega^\epsilon)} \| v\|_{L^2(\tilde \Omega^\epsilon)}. \end{equation*}

Combining all the above estimates into \eqref{id-L2w}, we thus get the following estimate 
$$
\begin{aligned}
 \frac 12 \frac{d}{d\hat \tau} \| \omega \|_{L_\phi^2(\tilde \Omega^\epsilon)}^2 + \tilde \nu \| \nabla \omega \|^2_{L_\phi^2(\tilde \Omega^\epsilon)} 
&
 \lesssim \varepsilon^{\frac12} \| \omega \|_{L^2_\phi (\tilde \Omega^\epsilon)} \Big[\tilde\nu^{1/4}( \tilde\nu\varepsilon^{\frac32} + \e^{\alpha+1})+ \e^{\frac32} \tilde\nu^{1/4} \| v\|_{L^2(\tilde \Omega^\epsilon)} \Big]
 \\& \quad 
 + \tilde \nu \Big[ \e^\alpha \|\omega \|_{L^2(\tilde \Omega^\epsilon)}^2 +\e^{2\alpha} \| v \|_{L^2(\tilde \Omega^\epsilon)} \|\omega\|_{L^2(\tilde \Omega^\epsilon)} + \e^{2\alpha-1}\Big].
 \end{aligned}
 $$
Applying the Young's inequality, we obtain at once 
\begin{multline}\label{temp}
 \frac 12 \frac{d}{d\hat \tau} \| \omega \|_{L_\phi^2(\tilde \Omega^\epsilon)}^2 + \tilde \nu \| \nabla \omega \|^2_{L_\phi^2(\tilde \Omega^\epsilon)} 
 \lesssim \e \| \omega \|^2_{L^2_\phi (\tilde \Omega^\epsilon)} 
 + \e^3 \tilde \nu^{1/2} \| v \|^2_{L^2(\tilde \Omega^\epsilon)} \\
 +(\tilde \nu \varepsilon^\alpha+\tilde\nu^{3/2}\varepsilon^{4\alpha-3}) \|\omega\|_{L^2(\tilde \Omega^\epsilon)}^2 
 + \tilde\nu^{1/2}( \tilde\nu\varepsilon^{\frac32} + \e^{\alpha+1})^2+ \tilde \nu \e^{2 \alpha - 1}.
 \end{multline}
 To replace $\|\omega\|_{L^2(\tilde \Omega^\epsilon)}$ on the right by the corresponding weighted norm, we note from the proof of Lemma~\ref{lem-phi} that 
 \[
 \nabla_{z} \phi (z) \cdot n(z_{1})=\frac{(-\varepsilon^\alpha \eta')^2+1}{\langle \varepsilon^\alpha \eta'\rangle}\phi'(\tilde z_{2}) \approx \sqrt{m(T)}\quad \text{when}\quad\tilde z_{2}\leq \sqrt{\tilde \nu}.
 \]
 Hence, the properties of $\phi$ imply
\begin{equation}\label{omegaphi}
\begin{aligned}
 \|\omega\|_{L^2(\tilde \Omega^\epsilon)}^2&= \int_{\{\tilde z_2\geq \sqrt{\tilde \nu}\}} |\omega|^2 + \int_{\{\tilde z_2\leq \sqrt{\tilde \nu}\}} |\omega|^2 
 \\&\leq \tilde\nu^{-1/2} \|\omega\|_{L^2_{\phi}(\tilde \Omega^\epsilon)}^2+ \int_{\tilde \Omega^\epsilon} \nabla \phi \cdot n \omega^2\\
 &\leq \tilde\nu^{-1/2} \|\omega\|_{L^2_{\phi}(\tilde \Omega^\epsilon)}^2- \int_{\tilde \Omega^\epsilon} \phi (\partial_{z_{1}} n_{1}) \omega^2 - 2\int_{\tilde \Omega^\epsilon} \phi \omega n\cdot \nabla \omega\\
 &\lesssim \tilde\nu^{-1/2} \|\omega\|_{L^2_{\phi}(\tilde \Omega^\epsilon)}^2 +\tilde\nu^{1/2} \|\nabla \omega\|_{L^2_{\phi}(\tilde \Omega^\epsilon)}^2 ,
\end{aligned}
\end{equation}
because $|\partial_{z_{1}} n_{1}| \lesssim \varepsilon^\alpha$. The last term can then be absorbed into the left hand side of \eqref{temp}, when multiplied by $(\tilde \nu \varepsilon^\alpha+\tilde\nu^{3/2}\varepsilon^{4\alpha-3}) \ll \tilde \nu^{1/2}$. 
 
Finally, by anticipating the analysis of the next section, it is crucial that the constant terms in \eqref{temp} are smaller than $\tilde \nu^{1/2}\varepsilon^{2\alpha+1}$. This is possible, provided that $\tilde \nu\varepsilon^{2\alpha-1} \lesssim \tilde \nu^{1/2}\varepsilon^{2\alpha+1}$, or equivalently $\tilde\nu\lesssim \varepsilon^4$. Hence, if $\tilde\nu\lesssim \varepsilon^4$ and $m(T)\lesssim 1$, 
the estimate \eqref{temp} reduces to 
\begin{equation}\label{key-west}
 \frac{d}{d\hat \tau} \| \omega \|_{L_\phi^2(\tilde \Omega^\epsilon)}^2 + \tilde \nu \| \nabla \omega \|^2_{L_\phi^2(\tilde \Omega^\epsilon)} 
 \lesssim \e \| \omega \|^2_{L^2_\phi (\tilde \Omega^\epsilon)} 
 + \e^3 \tilde \nu^{1/2} \| v \|^2_{L^2(\tilde \Omega^\epsilon)} 
 + \tilde\nu^{1/2} \e^{2\alpha+2}+ \tilde \nu \e^{2 \alpha - 1},
 \end{equation}
 where we have assumed $\varepsilon$ small enough.

\subsubsection{End of the proof of Proposition~\ref{prop-vort}.}
In this section, we shall close the vorticity estimate, assuming $\tilde\nu\lesssim \varepsilon^4$ and $m(T)\lesssim 1$ (which is implied by the assumptions of Proposition~\ref{prop-vort}, see Remark~\ref{rem-mT}). Precisely, let us introduce 
\begin{equation*}
\begin{aligned}
\cN(\hat \tau): &= 
 \tilde \nu^{1/2} \eps^2 \| v (\hat \tau)\|^2_{L^2(\tilde \Omega^\epsilon)} + \| \omega (\hat \tau )\|^2_{L_\phi^2(\tilde \Omega^\epsilon)}.
\end{aligned}\end{equation*}

Adding the assumption $\tilde\nu\lesssim \varepsilon^4$, the velocity estimate proved in Proposition~\ref{prop-L2} reads: 
$$
\frac12 \frac{d}{d\hat \tau} \| v\|_{L^2(\tilde \Omega^\epsilon)}^2 + \tilde \nu \| \nabla v \|_{L^2(\tilde \Omega^{\e})}^2 \lesssim \e \| v\|_{L^2(\tilde \Omega^\epsilon)}^2 + \| v\|_{L^2(\tilde \Omega^\epsilon)} \| \omega\|_{L^2(\tilde \Omega^\epsilon)}
+ \e^{2\alpha +2} $$
so, using \eqref{omegaphi}, we get
$$\begin{aligned}
 &\frac{\tilde \nu^{1/2} \eps^2}2 \frac{d}{d\hat \tau} \| v\|_{L^2(\tilde \Omega^\epsilon)}^2 + \tilde \nu^{3/2} \eps^2 \| \nabla v \|_{L^2(\tilde \Omega^{\e})}^2 
 \\&\lesssim \tilde \nu^{1/2} \eps^3 \| v\|_{L^2(\tilde \Omega^\epsilon)}^2 + \tilde \nu^{1/2} \eps \| \omega\|_{L^2(\tilde \Omega^\epsilon)}^2
+ \tilde \nu^{1/2} \e^{2\alpha +4}
\\
& \lesssim \tilde \nu^{1/2} \eps^3 \| v\|_{L^2(\tilde \Omega^\epsilon)}^2 + 
\eps \|\omega\|_{L^2_{\phi}(\tilde \Omega^\epsilon)}^2 +\tilde\nu \eps \|\nabla \omega\|_{L^2_{\phi}(\tilde \Omega^\epsilon)}^2
+ \tilde \nu^{1/2} \e^{2\alpha +4} .
\end{aligned}$$
Together with the vorticity estimate \eqref{key-west}, this implies (for $\varepsilon$ small enough)
$$
 \frac{d}{d\hat \tau} \cN(\hat \tau) + \tilde \nu^{3/2} \eps^2 \| \nabla v \|_{L^2(\tilde \Omega^{\e})}^2 +\tilde \nu \| \nabla \omega \|^2_{L_\phi^2(\tilde \Omega^\epsilon)} \lesssim \e \cN(\hat \tau) + \tilde\nu^{1/2} \e^{2\alpha+2}+ \tilde \nu \e^{2 \alpha - 1} .
 $$
 By the Gronwall's inequality, this yields that
\[
\cN(\hat \tau) + \int_{0}^{\hat \tau} e^{\varepsilon C(\hat \tau-s)} \Big(\tilde \nu^{3/2} \eps^2 \| \nabla v \|_{L^2(\tilde \Omega^{\e})}^2 +\tilde \nu \| \nabla \omega \|^2_{L_\phi^2(\tilde \Omega^\epsilon)}\Big)\, ds \leq ( \tilde\nu^{1/2} \e^{2\alpha+1}+ \tilde \nu \e^{2 \alpha - 2} )e^{\varepsilon C\hat \tau}
\]
for any $\hat \tau\leq T/\varepsilon$. We deduce at once that
\begin{gather*}
 \sup_{0\le \hat \tau \leq T/\varepsilon} \e^2 \| v (\hat \tau)\|_{L^2(\tilde \Omega^\epsilon)}^2
 \lesssim \e^{2\alpha+1}+ \tilde \nu^{1/2} \e^{2 \alpha - 2} \\
 \sup_{0\le \hat \tau \leq T/\varepsilon} \| \omega (\hat \tau )\|^2_{L_\phi^2(\tilde \Omega^\epsilon)}
 \lesssim \tilde\nu^{1/2} \e^{2\alpha+1}+ \tilde \nu \e^{2 \alpha - 2} 
\\ \tilde \nu \eps^2 \| \nabla v \|_{L^2((0,T/\varepsilon)\times \tilde \Omega^{\e})}^2 +\tilde \nu^{1/2} \| \nabla \omega \|^2_{L^2(0,T/\varepsilon;L_\phi^2(\tilde \Omega^\epsilon)} 
 \lesssim \e^{2\alpha+1}+ \tilde \nu^{1/2} \e^{2 \alpha - 2}.
\end{gather*}
Moreover, by Lemma~\ref{lem-phi}, $\tilde\nu^{1/2}\| \omega (\hat \tau )\|^2_{L^2(\{z_2 - \e^\alpha \eta(z_1)\gtrsim \sqrt{\tilde \nu}\})} \leq \| \omega (\hat \tau )\|^2_{L_\phi^2(\tilde \Omega^\epsilon)}$, which gives the second estimate of Proposition~\ref{prop-vort}.

Finally, we use again \eqref{omegaphi} to write
\begin{align*}
 \|\omega\|_{L^2(0, T/\varepsilon; L^2(\tilde \Omega^\epsilon))}^2 
 &\lesssim
 \tilde\nu^{-1/2} \frac{T}{\varepsilon} \|\omega\|_{L^\infty(0,T/\varepsilon;L^2_{\phi}(\tilde \Omega^\epsilon))}^2 +\tilde\nu^{1/2} \|\nabla \omega\|_{L^2(0, T/\varepsilon;L^2_{\phi}(\tilde \Omega^\epsilon))}^2\\
 &\lesssim \varepsilon^{-1} \e^{2\alpha+1}+ \tilde \nu^{1/2} \e^{2 \alpha - 3} 
\end{align*}
which yields the last estimates of Proposition~\ref{prop-vort}. The proof of the proposition is complete.

\subsection{$L^\infty$ estimates} 

To go from Proposition~\ref{prop-vort} to the final stability result, we need to show that Assumption \eqref{assmp-mT} is satisfied. The estimate of $\| \nabla v \|_{L^\infty}$ will come from an elliptic estimate in $\tilde \Omega^\varepsilon$ (see Proposition~\ref{prop-omegav} in Appendix~\ref{appdx3}) where it will be important that $\| \omega \|_{L^\infty}+ \varepsilon^\alpha \| v \|_{L^\infty} \lesssim 1$. Deriving this uniform estimate is exactly the purpose of this section.

\begin{proposition}\label{prop-infty}
 Assume that $\tilde \nu \lesssim \varepsilon^6$. If there exists an absolute constant $K$ such that 
\begin{equation} \label{major_hyp}
\sup_{0\le \hat \tau\le T/\varepsilon}\|\nabla_z v (\hat \tau)\|_{L^\infty(\tilde \Omega^\epsilon)}
+ \sup_{0\le \hat \tau\le T/\varepsilon} \tilde \nu^{K } \| v (\hat \tau)\|_{H^2(\tilde \Omega^\epsilon)} \leq 1\quad \text{with} \quad T\leq T_{0},
\end{equation}
then, we have the uniform bound (for $\varepsilon$ small enough)
$$
\sup_{0\le \hat \tau\le T/\varepsilon} (\| \omega(\hat \tau)\|_{L^\infty(\tilde \Omega^\epsilon)} + \|v(\hat \tau)\|_{L^\infty(\tilde \Omega^\epsilon)}) \lesssim \e^{\alpha}.
$$
\end{proposition}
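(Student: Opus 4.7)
The plan is to apply the parabolic maximum principle to the vorticity equation \eqref{eqs-resw} with Dirichlet data \eqref{BC-resw}, and then to recover the $L^\infty$ bound on $v$ from that on $\omega$ via the div--curl estimate of Proposition~\ref{prop-omegav}. A bootstrap on $\|v\|_{L^\infty}$ handles the coupling between the two bounds at the boundary.

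I would first control the boundary datum \eqref{BC-resw}. Using $\partial_z \tilde u^\app = \varepsilon\,\partial_x u^\app$ together with Proposition~\ref{prop-Euler} (where the boundary-layer term dominates), $\|D_z(\tilde u^\app)\|_{L^\infty(\partial\tilde\Omega^\varepsilon)} \lesssim \varepsilon^\alpha$. Combined with $|\lambda|\lesssim \varepsilon^{\alpha-1}$, the curvature estimate $\|\kappa\|_\infty\lesssim \varepsilon^\alpha$ from \eqref{kappa} and $\|\tilde u^\app\|_\infty \lesssim 1$, this yields
$$ \|\omega|_{\partial\tilde\Omega^\varepsilon}\|_{L^\infty} \lesssim \varepsilon^\alpha\bigl(1 + \|v\|_{L^\infty(\tilde\Omega^\varepsilon)}\bigr). $$
Next I would bound the right-hand side of \eqref{eqs-resw}: from \eqref{curl-app} and \eqref{all_bounds}, $\|\nabla_z \omega^\app\|_{L^\infty} \lesssim \varepsilon^2$, while the $L^\infty$ analog of the computation leading to \eqref{L2-Rappw} gives $\|\varepsilon\,\curl_z \tilde R^{\nu,\varepsilon}_\app\|_{L^\infty} \lesssim \varepsilon^{\alpha+2}$ under $\tilde\nu\lesssim\varepsilon^6$, so that the source term is $\lesssim \varepsilon^2\|v\|_{L^\infty}+\varepsilon^{\alpha+2}$.

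Since $\omega(\hat\tau=0)=0$ and the convective field $\tilde u^\app + v$ is divergence-free and tangent to $\partial\tilde\Omega^\varepsilon$, applying the comparison principle to $\pm\omega$ over $\hat\tau\in[0,T_0/\varepsilon]$ gives
$$ \|\omega(\hat\tau)\|_{L^\infty} \lesssim \sup_{s\le\hat\tau}\|\omega|_\partial(s)\|_{L^\infty} + \tfrac{T_0}\varepsilon\bigl(\varepsilon^2\sup\|v\|_{L^\infty}+\varepsilon^{\alpha+2}\bigr) \lesssim \varepsilon^\alpha\bigl(1 + \sup\|v\|_{L^\infty}\bigr) + \varepsilon\sup\|v\|_{L^\infty} + \varepsilon^{\alpha+1}. $$
Under the a priori assumption $\|v\|_{L^\infty}\le 1$ this gives $\|\omega\|_{L^\infty}\lesssim\varepsilon^\alpha$. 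I would then invoke Proposition~\ref{prop-omegav} for the div--curl system $\div v=0$, $\curl v=\omega$, $v\cdot n=0$ in $\tilde\Omega^\varepsilon$, together with the $L^2$ bounds from Proposition~\ref{prop-vort}, to upgrade this to $\|v\|_{L^\infty}\lesssim\varepsilon^\alpha$.

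To make the bootstrap rigorous, the crude starter bound $\|v\|_{L^\infty}\le 1$ follows from \eqref{major_hyp}: Sobolev embedding and interpolation between $\tilde\nu^K\|v\|_{H^2}\le 1$ and the $L^2$ estimate $\|v\|_{L^2}\lesssim\varepsilon^{\alpha-1/2}$ of Proposition~\ref{prop-vort} give it for $\varepsilon$ small, and a continuity argument propagates it to all of $[0,T_0/\varepsilon]$, being strictly strengthened by the improved bound $\|v\|_{L^\infty}\lesssim\varepsilon^\alpha$. The principal difficulties are the $v$--$\omega$ coupling through $(2\kappa+\varepsilon\lambda)v\cdot\tau$ in \eqref{BC-resw}, which forces the bootstrap, and the long time horizon $T_0/\varepsilon$, which inflates source contributions by a factor $1/\varepsilon$: this is precisely why the stronger hypothesis $\tilde\nu\lesssim\varepsilon^6$ (rather than $\tilde\nu\lesssim\varepsilon^4$ from Proposition~\ref{prop-vort}) is required, so that the residual $\varepsilon\,\curl_z\tilde R^{\nu,\varepsilon}_\app$ remains small enough after being integrated up to time $T_0/\varepsilon$.
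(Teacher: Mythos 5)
Your overall plan—parabolic comparison principle for $\omega$, elliptic recovery of $v$, closing the coupling through the boundary term $(2\kappa+\varepsilon\lambda)v\cdot\tau$—matches the paper's structure, and your estimates on the boundary datum and on the source of \eqref{eqs-resw} are essentially correct. However, there is a genuine gap in how you close the loop.

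The crude starter bound $\|v\|_{L^\infty}\le 1$ does \emph{not} follow from \eqref{major_hyp} by Sobolev embedding and interpolation. In 2D, Gagliardo--Nirenberg gives
\[
\|v\|_{L^\infty}\lesssim \|v\|_{L^2}^{1/2}\,\|v\|_{H^2}^{1/2}\lesssim \varepsilon^{(\alpha-1/2)/2}\,\tilde\nu^{-K/2},
\]
and with $\tilde\nu\gtrsim\varepsilon^{N_1}$ the right-hand side is $\varepsilon^{(\alpha-1/2)/2-KN_1/2}$, a large negative power of $\varepsilon$. (The same defect appears if you try a pointwise argument using $\|\nabla v\|_{L^\infty}\le1$ together with $\|v\|_{L^2}\lesssim\varepsilon^{\alpha-1/2}$: that only gives $\|v\|_{L^\infty}\lesssim\varepsilon^{(\alpha-1/2)/2}$, which again blows up for $\alpha<1/2$.) So the bootstrap has no base case as written. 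Relatedly, you invoke Proposition~\ref{prop-omegav} to upgrade $\|\omega\|_{L^\infty}$ to $\|v\|_{L^\infty}$, but that proposition controls $\nabla v$, not $v$; the relevant tool is the log-Lipschitz elliptic estimate of Proposition~\ref{vinfty-vH2}.

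This points to what the paper actually does and why it works. Proposition~\ref{vinfty-vH2} gives
\[
\|v\|_{L^\infty}\lesssim\varepsilon+\varepsilon^{1/2}\|v\|_{L^2}+\|\omega\|_{L^2}\Big(\ln\big(2+\varepsilon^{-2}\|v\|_{H^2}\big)\Big)^{1/2}+\varepsilon^{\alpha}\|v\|_{L^\infty}\ln\big(2+\varepsilon^{-2}\|v\|_{H^2}\big),
\]
in which the potentially huge $H^2$ norm enters only \emph{inside a logarithm}. Since $\|v\|_{H^2}\le\tilde\nu^{-K}\le\varepsilon^{-KN_1}$, the log is $O(|\ln\varepsilon|)$, so the factor $\varepsilon^{\alpha}\ln(\cdot)\to0$ and the last term can be absorbed into the left-hand side. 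Splitting $\|\omega\|_{L^2}$ into the thin band $\{\tilde z_2\lesssim\sqrt{\tilde\nu}\}$ (where it is bounded by $\tilde\nu^{1/4}\varepsilon^{-1/2}\|\omega\|_{L^\infty}$) and its complement (where Proposition~\ref{prop-vort} gives the $\varepsilon^{\alpha+1/2}$ bound), and using $\tilde\nu\lesssim\varepsilon^6$, one arrives directly at the linear relation $\|v\|_{L^\infty}\lesssim\varepsilon^{\alpha}+\varepsilon^{1/2}\|\omega\|_{L^\infty}$, valid \emph{without any a priori smallness of $v$}. Combining this with the maximum-principle bound $\|\omega\|_{L^\infty}\lesssim\varepsilon^{\alpha}(1+\|v\|_{L^\infty})$ then closes algebraically (the coefficient of $\|\omega\|_{L^\infty}$ on the right is $O(\varepsilon^{\alpha+1/2})\ll1$), with no continuity argument required. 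Finally, the role of $\tilde\nu\lesssim\varepsilon^6$ is not primarily to tame the time-integrated residual $\varepsilon\,\curl_z\tilde R^{\nu,\varepsilon}_{\app}$ (that term is already harmless), but to ensure $\|v\|_{L^2}\lesssim\varepsilon^{\alpha-1/2}$ from Proposition~\ref{prop-vort} (one needs $\tilde\nu^{1/4}\varepsilon^{\alpha-1}\lesssim\varepsilon^{\alpha-1/2}$) and to make the thin-band contribution $\tilde\nu^{1/4}\varepsilon^{-1/2}|\ln\varepsilon|^{1/2}\lesssim\varepsilon^{1/2}$.
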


\begin{remark}\label{rem-enu}
We can now explain where the condition between $\tilde\nu$ and $\varepsilon$ stated in \eqref{nu-eps} (and in the main theorem) is used in our analysis:
\begin{itemize}
 \item Concerning the uniform bound of the velocity, we will use another elliptic estimate in $\tilde \Omega^\varepsilon$, namely Proposition~\ref{vinfty-vH2}, where we need $\varepsilon^{1/2} \| v \|_{L^2} \lesssim 1$. Such an estimate comes from Proposition~\ref{prop-vort} if and only if $\tilde\nu^{1/4}\varepsilon^{\alpha-1}\lesssim \varepsilon^{1/2}$ i.e. $\tilde\nu \lesssim \varepsilon^{6-4\alpha}\leq \varepsilon^6$ (for any $\alpha>0$, arbitrary small).
 \item Looking again at Proposition~\ref{vinfty-vH2}, using that $\|v\|_{H^2} \lesssim \tilde\nu^{-K}$, we will need the bound $C\varepsilon^\alpha \ln(2+ \tilde\nu^{-K} \varepsilon^{-2} )\leq 1/2$ to hold for any fixed $C$ and $\eps$ small enough, i.e. $\varepsilon^\alpha \ll 1/\ln \tilde \nu$. The condition $\tilde \nu \geq \varepsilon^{N_{1}}$ will guarantee this for any $N_{1}$ and any $\alpha=1/N_{0}$ (with $N_{0}, N_{1} \in \N$ arbitrary large).
\end{itemize}
\end{remark}

We now turn to the proof of Proposition~\ref{prop-infty}. As said in the previous remark, if $\tilde \nu \lesssim \varepsilon^6$, Proposition~\ref{prop-vort} gives that 
\[
\varepsilon^{\frac12} \| v \|_{L^2(\tilde \Omega^\varepsilon)} \lesssim \varepsilon^\alpha \quad \text{and} \quad \| \omega \|_{L^2(\{z_2 - \e^\alpha \eta(z_1)\gtrsim \sqrt{\tilde \nu}\})} \lesssim \e^{\alpha+\frac12}.
\]
So using Proposition~\ref{vinfty-vH2} and the assumption $\| v (\hat \tau)\|_{H^2(\tilde \Omega^\epsilon)} \leq \tilde \nu^{-K}\leq \varepsilon^{-KN_{1}}$ (with $N_{1}$ defined in \eqref{nu-eps}), wet get for small enough $\eps$: 
 \begin{equation}\label{pfff}
 \begin{aligned}
\|v\|_{L^\infty(\tilde{\Omega}^{\eps})}
&\lesssim \eps^\alpha + \| \omega \|_{L^2(\tilde{\Omega}^{\eps})} 
 \Big( \ln( 2 + { 1 \over \eps^{2+KN_{1}}})\Big)^{1\over 2}\\
 & \lesssim \eps^{\alpha}
+ \| \omega \|_{L^2{(\{ z_2 - \e^\alpha \eta(z_1) \lesssim \sqrt {\tilde \nu}\})}} |\ln \eps|^{1\over 2}
 + \| \omega \|_{L^2{(\{z_2 - \e^\alpha \eta(z_1) \gtrsim \sqrt {\tilde \nu}\})}} |\ln \eps|^{1\over 2}
 \\
 &\lesssim \eps^\alpha + 
\tilde \nu^{1/4} \e^{-1/2} |\ln \eps|^{1\over 2}\| \omega\|_{L^\infty}
 + \eps^{ \alpha+\frac12} |\ln \eps|^{1\over 2}\\
 &\lesssim \eps^\alpha + \varepsilon^{\frac12} \| \omega\|_{L^\infty(\tilde{\Omega}^{\eps})},
\end{aligned}\end{equation} 
where we have also used $\tilde \nu \lesssim \varepsilon^6$.

Next, we recall that the vorticity equation reads
\begin{equation}
\label{equationvort}
\left\{ \begin{aligned}
&\partial_{\hat\tau}\omega + (\tilde u^\mathrm{app} + v)\cdot \nabla_z \omega - \tilde \nu \Delta \omega = - v\cdot \nabla_z \omega^\mathrm{app} + \e \curl_z \tilde R^{\nu, \varepsilon}_\mathrm{app}
\\
&\omega = \omega_\Gamma:= 2 D_z (\tilde u^\app) n \cdot \tau + \varepsilon \lambda \tilde u^\app \cdot \tau + (2 \kappa+\varepsilon\lambda) v\cdot \tau , \qquad \forall~z\in \partial \tilde \Omega^\varepsilon.
\end{aligned} \right.
\end{equation}
We can then use a classical comparison principle for solutions of parabolic equations. Applying \cite[Proposition 21, page 37]{AW} to the functions 
$$ (\hat \tau, z) \rightarrow \omega(\hat \tau, z) \quad \text{and} \quad (\hat \tau,z) \rightarrow \| \omega_{\Gamma} \|_{L^\infty_{\hat \tau}L^\infty} + \int_0^{\hat \tau} \Big( \| v\cdot \nabla_z \omega^\mathrm{app} \|_{L^\infty}+ \e \| \curl_z \tilde R^{\nu, \varepsilon}_\mathrm{app} \|_{L^\infty} \Big)\; ds $$
we obtain over $(0,T/\eps)$:
$$ 
\| \omega \|_{L^\infty_{\hat \tau} L^\infty} \le \| \omega_{\Gamma} \|_{L^\infty_{\hat \tau} L^\infty} + \int_0^{T/\varepsilon} \Big( \| v\cdot \nabla_z \omega^\mathrm{app} \|_{L^\infty}+ \e \| \curl_z \tilde R^{\nu, \varepsilon}_\mathrm{app} \|_{L^\infty} \Big)\; ds .
$$
We recall that $\omega\vert_{t=0} = 0$ in the above. Functions $\tilde R^{\nu, \varepsilon}_\mathrm{app}$ and $\omega^\mathrm{app}$ are defined in \eqref{def-vRapp}-\eqref{reNSv-eqs} and \eqref{curl-app}, respectively. Hence, in view of the uniform bounds from \eqref{all_bounds} on the approximate solution, we obtain 
\begin{gather*}
\| \nabla_z \omega^\app \|_{L^\infty}=\varepsilon^2 \| \nabla_x \curl_{x} u^\app \|_{L^\infty(\Omega^\varepsilon)} \lesssim \e^2, \\
 \e \| \curl_z \tilde R^{\nu, \varepsilon}_\mathrm{app} \|_{L^\infty} \leq \varepsilon^3 \tilde \nu \| \curl_x \Delta u^\mathrm{app}\|_{L^\infty} +\varepsilon^2\|\curl_{x} R^\mathrm{app}\|_{L^\infty} \lesssim \varepsilon^3\tilde \nu + \e^{\alpha+2}, 
\end{gather*}
and the boundary estimate 
$$\|\omega_\Gamma\|_{L^\infty} \lesssim \varepsilon^\alpha \| v\|_{L^\infty} + \varepsilon \| D_{x} u^\app\|_{L^\infty} + \varepsilon^\alpha \| u^\app\|_{L^\infty} \lesssim \e^\alpha \| v\|_{L^\infty} + \e^{\alpha},$$
where we have used \eqref{kappa} and \eqref{all_bounds} (as $v$ and $u^\app$ are continuous).
By using that $T \leq T_{0}$, the above yields 
\begin{equation*}
\| \omega \|_{L^\infty_{\hat \tau} L^\infty} 
\le \e^\alpha \| v\|_{L^\infty_{\hat \tau}L^\infty} + \e^{\alpha}.
\end{equation*}

Then considering \eqref{pfff}, we get that 
$$
 \|\omega \|_{L^\infty_{\hat \tau}L^\infty} \lesssim \e^{\alpha}
$$
provided that $\varepsilon$ is small enough. Finally, by plugging this estimate into \eqref{pfff}, we also obtain that
 $$ \|v\|_{L^\infty_{\hat \tau}L^\infty} \lesssim \eps^\alpha.$$ 
This ends the proof of Proposition~\ref{prop-infty}.

\subsection{Higher order estimates}

In view of the quantities appearing in Assumption \eqref{major_hyp}, we derive in this section some higher order estimates.

\begin{proposition}\label{prop-higher}
 Assume that $\tilde \nu \lesssim \varepsilon^6$. If there exists an absolute constant $K > 0$ such that 
\begin{equation*}
\sup_{0\le \hat \tau\le T/\varepsilon}\|\nabla_z v (\hat \tau)\|_{L^\infty(\tilde \Omega^\epsilon)}
+ \sup_{0\le \hat \tau\le T/\varepsilon} \tilde \nu^{K } \| v (\hat \tau)\|_{H^2(\tilde \Omega^\epsilon)} \leq 1\quad \text{with} \quad T\leq T_{0},
\end{equation*}
then, we have the uniform bound (for $\varepsilon$ small enough)
 \begin{gather}
 \label{dtvL2L2}
 \tilde \nu^{1 \over 2 } \| \partial_{\hat \tau}v \|_{L^2(0,T/\varepsilon ;L^2 (\tilde \Omega^\varepsilon))} + \tilde \nu \| \nabla v \|_{L^\infty(0,T/\varepsilon ;L^2 (\tilde \Omega^\varepsilon))} \lesssim \e^{\alpha- {1 \over 2}}, \\
 \label{dtvLinftyL2} \tilde \nu^{1 \over 2 } \| \partial_{\hat \tau} v \|_{L^\infty(0,T/\varepsilon ;L^2 (\tilde \Omega^\varepsilon))} + \tilde \nu \| \nabla \partial_{\hat \tau} v \|_{L^2(0,T/\varepsilon ;L^2 (\tilde \Omega^\varepsilon))}
 \lesssim \e^{\alpha - {1 \over 2} }, \\
 \label{dtvLinftyH1}
 \tilde {\nu}^{3 \over 2} \| \nabla \partial_{\hat \tau} v \|_{L^\infty(0,T/\varepsilon ;L^2 (\tilde \Omega^\varepsilon))} \lesssim \e^{\alpha - {1 \over 2}}, \\
 \label{D2omega}
 \tilde {\nu}^{5\over 2 } \| \omega \|_{L^\infty(0,T/\varepsilon ;H^2 (\tilde \Omega^\varepsilon))} \lesssim \e^{ \alpha - {1\over 2}}.
 \end{gather}
\end{proposition}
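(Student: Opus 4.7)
The four estimates of the proposition are proved in cascade, each obtained from a suitable time-differentiated version of \eqref{prob-resv} by an energy-type estimate in the spirit of Propositions~\ref{prop-L2} and~\ref{prop-vort}. The structural facts used throughout are: the time derivative $\partial_{\hat\tau}$ preserves the no-penetration condition $v\cdot n=0$ and the Navier slip structure \eqref{reNSbdry2-eqs} with boundary data simply replaced by its time derivative (uniformly bounded, cf.\ Remark~\ref{rem-dtau}); the initial condition $v|_{\hat\tau\le 0}=0$ together with $f|_{t<0}=0$ forces every $\partial_{\hat\tau}^k v$ to vanish at $\hat\tau=0$; and Proposition~\ref{prop-infty} combined with \eqref{major_hyp} yields $\|\tilde u^\app\|_{L^\infty}+\|v\|_{L^\infty}+\|\nabla v\|_{L^\infty}\lesssim 1$, so that all convection contributions can be absorbed by Young's inequality.

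For \eqref{dtvL2L2}, the plan is to test \eqref{reNSv-eqs} against $\partial_{\hat\tau}v$: the viscous term then produces $\tfrac{\tilde\nu}{2}\tfrac{d}{d\hat\tau}\|\nabla v\|^2_{L^2}$ together with the boundary integral $\tilde\nu\int_{\partial\tilde\Omega^\eps}(\partial_n v\cdot\tau)(\partial_{\hat\tau}v\cdot\tau)d\sigma$, which I rewrite using \eqref{partialvn} and control via Lemma~\ref{lem-trace2} and the bounds $|\kappa|,\eps|\lambda|\lesssim\eps^\alpha$. The nonlinear and source terms are estimated as in Proposition~\ref{prop-L2} using \eqref{Renuapp-L2} and Proposition~\ref{prop-infty}; integration in $\hat\tau$ combined with the $L^\infty L^2$ and $L^2L^2$ bounds on $v$ and $\nabla v$ from \eqref{vLinftyL2} then delivers \eqref{dtvL2L2}. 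For \eqref{dtvLinftyL2}, I set $w:=\partial_{\hat\tau}v$; the $\partial_{\hat\tau}$-differentiated system is a linearized Navier--Stokes equation for $w$ with the same convective coefficients $\tilde u^\app+v$, additional source terms
\begin{equation*}
-(\partial_{\hat\tau}\tilde u^\app+w)\cdot\nabla v-w\cdot\nabla\tilde u^\app-v\cdot\nabla\partial_{\hat\tau}\tilde u^\app+\eps\,\partial_{\hat\tau}\tilde R^{\nu,\eps}_\app,
\end{equation*}
and a Navier boundary condition of the same form as \eqref{reNSbdry2-eqs} with $\tilde u^\app$ replaced by $\partial_{\hat\tau}\tilde u^\app$. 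Testing this equation against $w$ and repeating the argument of Proposition~\ref{prop-L2} produces a Gronwall inequality in which the $L^2_tL^2_x$ bound on $w$ from \eqref{dtvL2L2} feeds the right-hand side, yielding \eqref{dtvLinftyL2}.

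For \eqref{dtvLinftyH1}, I test the $w$-equation instead against $\partial_{\hat\tau}w$. After absorbing half of $\|\partial_{\hat\tau}w\|^2_{L^2}$ on the left, the viscous contribution yields $\tfrac{\tilde\nu}{2}\tfrac{d}{d\hat\tau}\|\nabla w\|^2_{L^2}$ plus the usual boundary piece, treated through the analogue of \eqref{partialvn} applied to $w$, while the right-hand side is bounded in $L^2_x$ by $C(\|\nabla w\|_{L^2}+1)$. Integrating in time and plugging in the control $\tilde\nu\|\nabla w\|_{L^2_tL^2_x}\lesssim\eps^{\alpha-\frac12}$ already obtained in the previous step produces $\tilde\nu^3\|\nabla w\|^2_{L^\infty L^2}\lesssim\eps^{2\alpha-1}$, which is \eqref{dtvLinftyH1}. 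For \eqref{D2omega}, I read \eqref{equationvort} as an elliptic Dirichlet problem
\begin{equation*}
-\tilde\nu\Delta\omega=-\partial_{\hat\tau}\omega-(\tilde u^\app+v)\cdot\nabla\omega-v\cdot\nabla\omega^\app+\eps\,\curl_z\tilde R^{\nu,\eps}_\app,\qquad \omega|_{\partial\tilde\Omega^\eps}=\omega_\Gamma,
\end{equation*}
and apply $H^2$ elliptic regularity in $\tilde\Omega^\eps$ after flattening the boundary via $(\tilde z_1,\tilde z_2)=(z_1,z_2-\eps^\alpha\eta(z_1))$ (the transformed coefficients differing from the identity by $O(\eps^\alpha)$, hence giving $\eps$-independent elliptic constants). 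The dominant term is $\|\partial_{\hat\tau}\omega\|_{L^2}\leq\|\nabla\partial_{\hat\tau}v\|_{L^2}\lesssim\tilde\nu^{-3/2}\eps^{\alpha-\frac12}$ from \eqref{dtvLinftyH1}; the convection $\|(\tilde u^\app+v)\cdot\nabla\omega\|_{L^2}$ is handled by the interpolation $\|\nabla\omega\|_{L^2}\lesssim\|\omega\|_{L^2}^{1/2}\|\omega\|_{H^2}^{1/2}$, whose $\|\omega\|_{H^2}^{1/2}$ contribution is absorbed on the left; the boundary piece $\tilde\nu\|\omega_\Gamma\|_{H^{3/2}(\partial\tilde\Omega^\eps)}$ is controlled via \eqref{BC-resw}, \eqref{all_bounds}, and the trace bound $\|v|_{\partial}\|_{H^{3/2}}\leq C\|v\|_{H^2}\leq C\tilde\nu^{-K}$ from \eqref{major_hyp}. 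Multiplying by $\tilde\nu^{3/2}$ closes \eqref{D2omega}.

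The main obstacle is the recurring boundary integral $\tilde\nu\int_{\partial\tilde\Omega^\eps}\partial_n(\cdot)\cdot(\cdot)\,d\sigma$ generated at each integration by parts of the viscous term: at every step it must be rewritten through the analogue of \eqref{partialvn}, and the unbounded curvature $|\kappa|\lesssim\eps^\alpha$ together with the friction $|\lambda|\lesssim\eps^{-1+\alpha}$ have to be tracked carefully so that contributions from $\tilde u^\app$ do not dominate the desired $\eps^{\alpha-1/2}$. A secondary subtlety lies in Step~3, where the convection term is only comparable to $\|\nabla w\|_{L^2}$, so that the target power $\tilde\nu^{3/2}$ (rather than a weaker one) is reached only by feeding the sharp time-integrated bound from \eqref{dtvLinftyL2} into the energy identity, rather than applying Gronwall naively; and in Step~4, where the $\tilde\nu^{-K}$ factor from the a priori $H^2$ bound on $v$ only appears in the boundary data and is controlled by the factor $\tilde\nu^{5/2}$ thanks to the lower bound $\tilde\nu\gtrsim\eps^{N_1}$ from \eqref{nu-eps}.
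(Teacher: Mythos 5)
Your Steps 1--3 follow the paper's own proof essentially verbatim: test \eqref{reNSv-eqs} against $\partial_{\hat\tau}v$ to get the first bound, test the $\partial_{\hat\tau}$-differentiated system \eqref{eqdtauv} against $\partial_{\hat\tau}v$ to get the second, and test it against $\partial_{\hat\tau\hat\tau}v$ to get the third, in each case rewriting the viscous boundary integral via the analogue of~\eqref{partialvn} and using $|\kappa|,\,\eps|\lambda|\lesssim\eps^\alpha$ together with Lemmas~\ref{lem-trace},~\ref{lem-trace2}. Those parts are correct and agree with the paper.

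Step~4 is where you diverge from the paper, and it has a genuine gap in the boundary term. You propose to estimate the $H^{3/2}(\pa\tilde\Omega^\eps)$ norm of the Dirichlet datum $\omega_\Gamma$ in~\eqref{BC-resw} by using the trace bound $\|v\|_{H^{3/2}(\pa\tilde\Omega^\eps)}\lesssim\|v\|_{H^2}\lesssim\tilde\nu^{-K}$ coming directly from the a~priori hypothesis~\eqref{major_hyp}. This gives a contribution of order $\eps^\alpha\tilde\nu^{-K}$ to $\|\omega\|_{H^2}$, so after multiplying by $\tilde\nu^{5/2}$ you are left with $\tilde\nu^{5/2-K}\eps^\alpha$. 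Your closing remark asserts this is controlled \emph{thanks to} $\tilde\nu\gtrsim\eps^{N_1}$, but the inequality goes the wrong way: in the bootstrap of Theorem~\ref{theo-main} one fixes $K\geq 3$, and for $K=3$, $\tilde\nu\sim\eps^{N_1}$ with $N_1$ large, one has $\tilde\nu^{5/2-K}\eps^\alpha = \tilde\nu^{-1/2}\eps^\alpha\sim\eps^{\alpha-N_1/2}\gg\eps^{\alpha-1/2}$. So \eqref{D2omega} does not follow from this estimate. The paper avoids exactly this issue by not invoking the crude $\tilde\nu^{-K}$ bound at all in Step~4: it first derives $\|f_\omega\|_{H^{-1}}\lesssim\tilde\nu^{-1}\eps^{\alpha-\frac12}$ from the lower-order bounds \eqref{vLinftyL2}--\eqref{dtvLinftyL2}, hence $\|\omega\|_{H^1}\lesssim\tilde\nu^{-2}\eps^{\alpha-\frac12}$ and, via Proposition~\ref{prop-omegav}, the $K$-independent bound $\|v\|_{H^2}\lesssim\tilde\nu^{-2}\eps^{\alpha-\frac12}$; only then is the $H^{3/2}$ trace of $\omega_\Gamma$ estimated, by $\tilde\nu^{-2}\eps^{2\alpha-\frac12}$, which is consistent with \eqref{D2omega}. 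Your proposal skips this two-stage bootstrap in $\omega$, and that is precisely where it breaks.

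A secondary, smaller issue in Step~4: the interpolation $\|\nabla\omega\|_{L^2}\lesssim\|\omega\|_{L^2}^{1/2}\|\omega\|_{H^2}^{1/2}$ followed by absorption of $\|\omega\|_{H^2}$ on the left leaves you with a contribution $\tilde\nu^{-1}\|\omega\|_{L^2}\lesssim\tilde\nu^{-2}\eps^{\alpha-\frac12}$ in $\tilde\nu\|\omega\|_{H^2}$, i.e.\ $\tilde\nu^{-3}\eps^{\alpha-\frac12}$ for $\|\omega\|_{H^2}$, not $\tilde\nu^{-5/2}$; the transport term, not $\partial_{\hat\tau}\omega$, is the dominant one in $f_\omega$. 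This is not a fatal issue for the end use in Theorem~\ref{theo-main} (only a logarithm of $\|v\|_{H^3}$ enters there), but it does mean the stated power $\tilde\nu^{5/2}$ is not actually reached by the interpolation argument as written.
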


In the sequel of this section, we prove this proposition. First, we use the boundary conditions \eqref{partialvn} and $(\partial_{\hat \tau} v)\cdot n =v\cdot n= 0$ to write
 \begin{align*}
 \int_{\tilde \Omega^\e} \Delta v \cdot \partial_{\hat \tau}v \, dx =& - {1 \over 2} {d \over d\hat \tau} \| \nabla v \|_{L^2(\tilde \Omega^{\e})}^2
 - \int_{\pa \tilde \Omega^\eps} \partial_{n} v \cdot \partial_{\hat \tau}v
 \\=& - {1 \over 2} {d \over d\hat \tau} \| \nabla v \|_{L^2(\tilde \Omega^{\e})}^2
 - \int_{\pa \tilde \Omega^\eps} (\partial_{n} v \cdot \tau) (\partial_{\hat \tau}v \cdot \tau)\\
 =&
 {d \over d\hat \tau} \Big( -{1 \over 2} \| \nabla v \|_{L^2(\tilde \Omega^{\e})}^2 + {1 \over 2}\int_{\pa \tilde \Omega^\eps} (\kappa+\varepsilon\lambda) |v\cdot \tau|^2 
 +\int_{\pa \tilde \Omega^\eps} (2D_z (\tilde u^\app) n \cdot \tau+\varepsilon\lambda \tilde u^\app \cdot \tau )(v\cdot \tau) \Big)\\
& - \int_{\pa \tilde \Omega^\eps} \partial_{\hat \tau} (2 D_z (\tilde u^\app) n \cdot \tau+\varepsilon\lambda \tilde u^\app \cdot \tau)( v \cdot \tau)\\
 =&
 {d \over d\hat \tau} \Big( -{1 \over 2} \| \nabla v \|_{L^2(\tilde \Omega^{\e})}^2 + {1 \over 2}\int_{\pa \tilde \Omega^\eps} (\kappa+\varepsilon\lambda) |v |^2 
 +\int_{\pa \tilde \Omega^\eps} (2 D_z (\tilde u^\app) n +\varepsilon\lambda \tilde u^\app ) \cdot v \Big)
 \\&- \int_{\pa \tilde \Omega^\eps} \partial_{\hat \tau}(2 D_z (\tilde u^\app) n+\varepsilon\lambda \tilde u^\app ) \cdot v .
 \end{align*}
Then, taking the scalar product of \eqref{prob-resv} with $\partial_{\hat \tau} v$, we obtain that 
 \begin{align*}
 \tilde \nu{d \over d\hat \tau} \Big( {1 \over 2 } \| \nabla v \|_{L^2}^2
 - { 1 \over 2 } \int_{\pa \tilde \Omega^\eps} &(\kappa + \varepsilon \lambda) |v|^2 
 - \int_{\pa \tilde \Omega^\eps} (2 D_z (\tilde u^\app) n +\varepsilon\lambda \tilde u^\app ) \cdot v \Big)
 + \| \partial_{\hat \tau} v \|_{L^2}^2 \\
 \leq&
 \| \partial_{\hat \tau} v\|_{L^2}\Big( \| \nabla v \|_{L^2} \| \tilde u^\app +v \|_{L^\infty} + \|v\|_{L^2} \| \nabla_{z} \tilde u^\app \|_{L^\infty} + \| \varepsilon \tilde R^{\nu,\varepsilon}_{\app} \|_{L^2}\Big)\\
& + \tilde \nu \Big|\int_{\pa \tilde \Omega^\eps} \partial_{\hat \tau} (2 D_z (\tilde u^\app) n+\varepsilon\lambda \tilde u^\app ) \cdot v \Big| 
 \end{align*}
 By using the $L^\infty$ bound on $u^\app$ and $v$ (see \eqref{all_bounds} and Proposition~\ref{prop-infty}), that $\| \nabla_{z} \tilde u^\app \|_{L^\infty}=\varepsilon \| \nabla_{x} u^\app \|_{L^\infty}\lesssim \varepsilon^\alpha$ (see again \eqref{all_bounds}), the estimate of the $L^2$ norm of $\varepsilon \tilde R^{\nu,\varepsilon}_{\app}$ \eqref{Renuapp-L2}, and that $ \| \partial_{\hat \tau} D_z (\tilde u^\app)\|_{L^\infty}= \varepsilon^2 \| \partial_{t} D_x (u^\app)\|_{L^\infty}$ is of the same order as $ \varepsilon^2\|D_x (u^\app)\|_{L^\infty}$ (see Remark~\ref{rem-dtau}) i.e. less than $C\varepsilon^{\alpha+1}$, the Young inequality yields (for $\varepsilon$ small enough)
\begin{multline*}
 \tilde \nu {d \over d\hat \tau} \Big( {1 \over 2 } \| \nabla v \|_{L^2}^2
 - { 1 \over 2 } \int_{\pa \tilde \Omega^\eps} (\kappa+\varepsilon\lambda) |v|^2 
 - \int_{\pa \tilde \Omega^\eps} (2 D_z (\tilde u^\app) n +\varepsilon\lambda \tilde u^\app ) \cdot v \Big)
 + {1 \over 2 } \| \partial_{\hat \tau} v \|_{L^2}^2 \\
 \lesssim \| \nabla v \|_{L^2}^2 + \varepsilon^{2\alpha} \|v\|_{L^2}^2 + \e^{ 2 \alpha + 3}
 + \tilde \nu \frac1\varepsilon \varepsilon^{\alpha+1} \varepsilon^\alpha.
 \end{multline*}
Consequently, by integrating in time between $0$ and $t$, we get that for any $t\in [0,T/\varepsilon]$ we have
 \begin{align*}
 { \tilde \nu \over 2 } \| \nabla v \|_{L^2}^2+ {1 \over 2 } \| \partial_{\hat \tau} v \|_{L^2(0,t;L^2)}^2
\lesssim& { \tilde \nu \over 2 } \int_{\pa \tilde \Omega^\eps} |\kappa+\varepsilon\lambda| |v|^2 + \tilde \nu \int_{\pa \tilde \Omega^\eps} |(2 D_z (\tilde u^\app) n +\varepsilon\lambda \tilde u^\app ) \cdot v | \\
& + \| \nabla v \|_{L^2(0,t;\Omega^{\e})}^2 + \varepsilon^{2\alpha-1} T\|v\|_{L^\infty L^2}^2 + \e^{ 2 \alpha + 2}T,
 \end{align*}
 where we have used that $\tilde \nu \varepsilon^{-1}\lesssim \varepsilon^3$. Using that $\| \kappa +\varepsilon\lambda \|_{L^\infty} \lesssim \eps^\alpha$ and $\| v\|_{L^\infty} + \| D_z (\tilde u^\app)\|_{L^\infty} + \varepsilon \|\lambda \tilde u^\app\|_{L^\infty} \lesssim \varepsilon^\alpha$ ({\it cf} Proposition~\ref{prop-infty}), we get
 $$ \tilde \nu \|\nabla v \|_{L^\infty(0,T/\varepsilon;L^2)}^2 + \ \| \partial_{\hat \tau}v \|_{L^2(0,T/\varepsilon;L^2)}^2
 \lesssim 
 \tilde \nu \varepsilon^{2\alpha-1}+ \tilde\nu^{-1} \varepsilon^{2\alpha-1}+ \e^{4 \alpha - 2} + \e^{ 2 \alpha + 2}T$$
 and hence
 $$ \tilde \nu^2 \|\nabla v \|_{L^\infty(0,T/\varepsilon;L^2)}^2 + \tilde \nu \| \partial_{\hat \tau}v \|_{L^2(0,T/\varepsilon;L^2)}^2
 \lesssim \e^{2 \alpha - 1},$$
 which yields \eqref{dtvL2L2}.

 We shall now estimate $\partial_{\hat \tau}v$ uniformly in time. By taking the time derivative of 
 \eqref{prob-resv} and \eqref{partialvn}, we find that 
\begin{equation}
\label{eqdtauv}
\begin{aligned}
\partial_{\hat \tau}( \partial_{\hat \tau}v) + (\tilde u^\mathrm{app} + v)\cdot \nabla_z \partial_{\hat \tau}v + \partial_{\hat \tau}v\cdot \nabla_z \tilde u^\mathrm{app} + \nabla_z \partial_{ \hat \tau}p 
&=\tilde \nu \partial_{\hat \tau}\Delta_z v + F, 
\\ \Div \partial_{\hat \tau}v&= 0,
\end{aligned}\end{equation}
where
$$ F= 
 \epsilon \partial_{\hat \tau}\tilde R^{\nu, \varepsilon}_\mathrm{app}
- \partial_{\hat \tau}( \tilde u^\mathrm{app} + v)\cdot \nabla_z v - v \cdot \nabla_z \partial_{\hat \tau}\tilde u^\mathrm{app}
$$
and the boundary conditions
$$
\partial_{\hat \tau}v \cdot n = 0
, \, \partial_{n} \partial_{\hat \tau}v \cdot \tau= -2 D_z (\partial_{\hat \tau} \tilde u^\app) n \cdot \tau - \varepsilon \lambda \partial_{\hat \tau} \tilde u^\app \cdot \tau -(\kappa+\varepsilon\lambda) \partial_{\hat \tau} v\cdot \tau 
. $$
By the trace Lemma~\ref{lem-trace}, we write that
\[
 \Big| \int_{\pa \tilde \Omega^\eps} \partial_{n} \partial_{\hat \tau}v \cdot \partial_{\hat \tau}v \Big| = \Big| \int_{\pa \tilde \Omega^\eps} (\partial_{n} \partial_{\hat \tau}v \cdot \tau )(\partial_{\hat \tau}v \cdot \tau)\Big| 
\lesssim \varepsilon^{\alpha+2} + \varepsilon^\alpha \| \partial_{\hat \tau}v \|_{L^2(\pa \tilde \Omega^\eps)}^2
\lesssim \varepsilon^{\alpha+2} + \varepsilon^\alpha \| \partial_{\hat \tau}v \|_{L^2}\| \partial_{\hat \tau} \nabla v \|_{L^2}
\]
where we have used that $ \| \partial_{\hat \tau} D_z (\tilde u^\app)\|_{L^\infty}= \varepsilon^2 \| \partial_{t} D_x (u^\app)\|_{L^\infty}\lesssim \varepsilon^{\alpha+1}$, that $ \| \partial_{\hat \tau} \tilde u^\app\|_{L^\infty}= \varepsilon \| \partial_{t} u^\app\|_{L^\infty}\lesssim \varepsilon$ and $|\kappa| + \varepsilon |\lambda|\lesssim \varepsilon^\alpha$.
Then, taking the scalar product of \eqref{eqdtauv} with $\partial_{\hat \tau}v$, the standard energy estimate gives
 $$ {d \over d\hat \tau } {1 \over 2} \| \partial_{\hat \tau} v \|_{L^2}^2
 + \frac{\tilde \nu}2 \| \nabla \partial_{\hat \tau}v \|_{L^2}^2 \lesssim
 \| \partial_{\hat \tau}v \|_{L^2}^2 + 1 + \varepsilon^{2\alpha+2} \|v\|_{L^2}^2,$$
 where we have used that $\|\nabla_z \tilde u^\mathrm{app}\|_{L^\infty}\lesssim \varepsilon^\alpha$, that $ \| \epsilon \partial_{\hat \tau}\tilde R^{\nu, \varepsilon}_\mathrm{app}\|_{L^2}=\varepsilon^2\| \partial_{t}R^{\nu, \varepsilon}_\mathrm{app}\|_{L^2}\lesssim \varepsilon^{\alpha+\frac52}$ (combining \eqref{Renuapp-L2} with Remark~\ref{rem-dtau}), that $\|\partial_{\hat \tau} \tilde u^\mathrm{app} \|_{L^2(\tilde \Omega^\varepsilon)}=\|\partial_{t} u^\mathrm{app} \|_{L^2(\Omega^\varepsilon)}\lesssim 1$ (see Remark~\ref{rem-dtau}), that $\| \nabla v \|_{L^\infty} \lesssim 1$ by assumption, and again that $\| \partial_{\hat \tau} D_z (\tilde u^\app)\|_{L^\infty} + \varepsilon\| \lambda \partial_{\hat \tau} \tilde u^\app\|_{L^\infty} \lesssim \varepsilon^{\alpha+1}$. We know by Proposition~\ref{prop-vort} that $\varepsilon^{2\alpha+2} \|v\|_{L^2}^2\lesssim \varepsilon^{3\alpha+1}$. Integrating with respect to time, we infer that
 \[
\| \partial_{\hat \tau} v \|_{L^\infty(0,T/\varepsilon ;L^2)}^2 + \tilde \nu \| \nabla \partial_{\hat \tau} v \|_{L^2(0,T/\varepsilon ;L^2)}^2
\lesssim \| \partial_{\hat \tau}v \|_{L^2(0,T/\varepsilon ;L^2)}^2 + \varepsilon^{-1}T.
 \]
 Therefore, \eqref{dtvL2L2} implies \eqref{dtvLinftyL2}.

Next, to prove \eqref{dtvLinftyH1}, we can take the scalar product of \eqref{eqdtauv} by $\partial_{\hat \tau\hat \tau}v$, and as in the beginning of this proof, we obtain that
\begin{align*}
 \tilde \nu{d \over d\hat \tau} \Big( {1 \over 2 } \| \nabla \partial_{\hat \tau}v \|_{L^2}^2 
& - { 1 \over 2 } \int_{\pa \tilde \Omega^\eps} (\kappa+\varepsilon\lambda) |\partial_{\hat \tau} v|^2 
 - \int_{\pa \tilde \Omega^\eps} (2 D_z (\partial_{\hat \tau} \tilde u^\app) n +\varepsilon\lambda \partial_{\hat \tau} \tilde u^\app ) \cdot \partial_{\hat \tau}v \Big)
 + \| \partial_{\hat \tau} \partial_{\hat \tau} v \|_{L^2}^2 \\
 \leq &
 \| \partial_{\hat \tau} \partial_{\hat \tau} v\|_{L^2}\Big( \| \nabla \partial_{\hat \tau} v \|_{L^2} \| \tilde u^\app +v \|_{L^\infty} + \|\partial_{\hat \tau} v\|_{L^2} \| \nabla_{z} \tilde u^\app \|_{L^\infty} + \| F \|_{L^2}\Big) \\
& + \tilde \nu \Big|\int_{\pa \tilde \Omega^\eps} \partial_{\hat \tau}(2 D_z (\partial_{\hat \tau} \tilde u^\app) n +\varepsilon\lambda \partial_{\hat \tau} \tilde u^\app ) \cdot \pa_{\hat \tau}v \Big| 
 \end{align*}
hence
\begin{multline*}
 \tilde \nu{d \over d\hat \tau} \Big( {1 \over 2 } \| \nabla \partial_{\hat \tau}v \|_{L^2}^2
 - { 1 \over 2 } \int_{\pa \tilde \Omega^\eps} (\kappa+\varepsilon\lambda) |\partial_{\hat \tau} v|^2 
 - \int_{\pa \tilde \Omega^\eps} (2 D_z (\partial_{\hat \tau} \tilde u^\app) n +\varepsilon\lambda \partial_{\hat \tau} \tilde u^\app ) \cdot \partial_{\hat \tau}v \Big)
 + \frac12 \| \partial_{\hat \tau} \partial_{\hat \tau} v \|_{L^2}^2 \\
 \leq
\| \nabla \partial_{\hat \tau} v \|_{L^2}^2 +\varepsilon^{2\alpha} \|\partial_{\hat \tau} v\|_{L^2}^2 +1,
 \end{multline*}
 where we have used the same estimates as in the proof of \eqref{dtvLinftyL2}.
 Consequently, we can integrate in time between $0$ and $t$ (for $t\leq T/\varepsilon$):
 \begin{align*}
 { \tilde \nu \over 2 } \| \nabla \partial_{\hat \tau} v \|_{L^2}^2+ {1 \over 2 } \| \partial_{\hat \tau} \partial_{\hat \tau} v \|_{L^2(0,t;L^2)}^2
\: \lesssim \: & { \tilde \nu \over 2 } \int_{\pa \tilde \Omega^\eps} |\kappa+\varepsilon\lambda| |\partial_{\hat \tau} v|^2 + \tilde \nu \int_{\pa \tilde \Omega^\eps} |(2 D_z (\partial_{\hat \tau} \tilde u^\app) n +\varepsilon\lambda \partial_{\hat \tau} \tilde u^\app ) \cdot \partial_{\hat \tau}v| \\
\: & + \| \nabla \partial_{\hat \tau} v \|_{L^2(0,t;\Omega^{\e})}^2 + \varepsilon^{2\alpha-1} T\|\partial_{\hat \tau} v\|_{L^\infty L^2}^2 + \varepsilon^{-1}T,
 \end{align*}
We know that $\| D_z (\partial_{\hat \tau}\tilde u^\app) \|_{L^\infty} +\varepsilon \|\lambda \partial_{\hat \tau} \tilde u^\app\|_{L^\infty} \lesssim \varepsilon^{\alpha+1}$, hence the trace Lemma~\ref{lem-trace} implies that
 \begin{align*}
 { \tilde \nu \over 2 } \| \nabla \partial_{\hat \tau} v \|_{L^2}^2+ {1 \over 2 } \| \partial_{\hat \tau} \partial_{\hat \tau} v \|_{L^2(0,t;L^2)}^2
\: \lesssim \: & \tilde \nu \varepsilon^\alpha \| \partial_{\hat \tau} v \|_{L^\infty(0,t;L^2)} \| \nabla \partial_{\hat \tau} v \|_{L^\infty(0,t;L^2)} + \tilde \nu \varepsilon^{\alpha+1}\\
& + \| \nabla \partial_{\hat \tau} v \|_{L^2(0,t;\Omega^{\e})}^2 + \varepsilon^{2\alpha-1} T\|\partial_{\hat \tau} v\|_{L^\infty L^2}^2 + \varepsilon^{-1}T,
 \end{align*}
so \eqref{dtvL2L2} and \eqref{dtvLinftyL2} allow us to state that
 $$ \tilde \nu \|\nabla \partial_{\hat \tau} v\|_{L^\infty(0,T/\varepsilon;L^2)}^2 \: \lesssim \: \tilde \nu^{-2}\varepsilon^{2\alpha-1}
 + \tilde \nu^{-1} \varepsilon^{4\alpha-2} \lesssim \tilde \nu^{-2}\varepsilon^{2\alpha-1}
 ,$$
which yields \eqref{dtvLinftyH1}:
 $$
 \tilde \nu^{1 \over 2 }\|\nabla \partial_{\hat \tau} v\|_{L^\infty(0,T/\varepsilon;L^2)} \lesssim \tilde \nu^{-1} \e^{\alpha - {1 \over 2}}.
 $$
Finally, to obtain \eqref{D2omega}, we come back to \eqref{equationvort}, and write 
\begin{align*}
- \tilde \nu \Delta \omega & = f_\omega \: \text{ in } \tilde \Omega^\eps, \\
\omega\vert_{\pa \tilde \Omega^\eps} & = 2 D_z (\tilde u^\app) n \cdot \tau + \varepsilon \lambda \tilde u^\app \cdot \tau + (2 \kappa+\varepsilon\lambda) v\cdot \tau
\end{align*}
where 
\begin{align*}
f_\omega & = - \partial_{\hat\tau}\omega - (\tilde u^\mathrm{app} + v)\cdot \nabla_z \omega - v\cdot \nabla_z \omega^\mathrm{app} + \e \curl_z \tilde R^{\nu, \varepsilon}_\mathrm{app} \\
 & = \curl_z \left( - \partial_{\hat \tau} v + \eps \tilde R^{\nu, \varepsilon}_\mathrm{app} \right) - {\rm div}_z \left( v \, \omega^\mathrm{app} + (\tilde u^\mathrm{app} + v) \omega \right).
 \end{align*}
Using the second writing, the estimate \eqref{dtvLinftyL2}, the bound $\| \eps \tilde R^{\nu, \varepsilon}_\mathrm{app} \|_{L^\infty L^2} \lesssim\eps^{\alpha+\frac{3}{2}}$, the inequality $\| v \|_{L^\infty L^2} \lesssim \eps^{\alpha-\frac{1}{2}}$ given in \eqref{vLinftyL2}, the fact that $\|v\|_{L^\infty L^\infty} \lesssim \eps^\alpha$, and the estimate \eqref{dtvL2L2} (which yields a $L^\infty L^2$ bound on $\omega$), we obtain: 
$$ \| f_\omega \|_{L^\infty(0,T/\eps ; H^{-1}(\tilde \Omega^\eps))} \lesssim \tilde \nu^{-1} \eps^{\alpha-\frac12}. $$
Moreover, by the inequalities $\|\kappa\|_{L^\infty} + \varepsilon \|\lambda \|_{L^\infty}\lesssim \eps^\alpha$, 
$ \| v \|_{L^\infty H^1} \lesssim \tilde \nu^{-1}\eps^{\alpha-\frac{1}{2}}$ 
({\it cf} \eqref{vLinftyL2}-\eqref{dtvLinftyL2}), and by the inequality $\| 2 D_z (\tilde u^\app) n \cdot \tau + \varepsilon \lambda \tilde u^\app \cdot \tau \|_{L^\infty H^1} \lesssim \eps^{\alpha}$, we have 
$$ \| 2 D_z (\tilde u^\app) n \cdot \tau + \varepsilon \lambda \tilde u^\app \cdot \tau + (2 \kappa+\varepsilon\lambda) v\cdot \tau \|_{L^\infty(0,T/\eps ; H^{1/2}(\pa \tilde \Omega^\eps))} \lesssim \tilde \nu^{-1} \eps^{2 \alpha - \frac{1}{2}}. $$
We deduce: 
\begin{equation*} 
\| \omega \|_{L^\infty(0,T/\eps ; H^1(\tilde \Omega^\eps))} \: \lesssim \: \tilde \nu^{-2} \eps^{\alpha-\frac12}. 
\end{equation*}
From the first inequality of Proposition~\ref{prop-omegav} (with $k=2$), we deduce in turn: 
\begin{equation*} 
\| v \|_{L^\infty(0,T/\eps ; H^2(\tilde \Omega^\eps))} \: \lesssim \: \tilde \nu^{-2} \eps^{\alpha-\frac12}. 
\end{equation*}

Using \eqref{dtvLinftyH1} and this last bound, it is then easy to see that 
$$ \| f_{\omega} \|_{L^\infty(0,T/\eps; L^2(\tilde \Omega^\eps))} \lesssim \tilde \nu^{-2} \eps^{\alpha-\frac12}$$
while 
$$ \| 2 D_z (\tilde u^\app) n \cdot \tau + \varepsilon \lambda \tilde u^\app \cdot \tau + (2 \kappa+\varepsilon\lambda) v\cdot \tau \|_{L^\infty(0,T/\eps ; H^{3/2}(\pa \tilde \Omega^\eps))} \lesssim \tilde \nu^{-2} \eps^{2 \alpha - \frac{1}{2}} .$$
The inequality \eqref{D2omega} follows. 

\subsection{Proof of Theorem~\ref{theo-main}}

We have now all the estimates necessary to our bootstrap argument.
Assume that $\tilde \nu \lesssim \varepsilon^6$ and fix a $K\geq 3$. We define by continuity
\[
T_{\varepsilon}:= \sup \Big\{ T\in (0,T_{0}], \ \sup_{0\le \hat \tau\le T/\varepsilon}\|\nabla_z v (\hat \tau)\|_{L^\infty(\tilde \Omega^\epsilon)}
+ \sup_{0\le \hat \tau\le T/\varepsilon} \tilde \nu^{K } \| v (\hat \tau)\|_{H^2(\tilde \Omega^\epsilon)} \leq 1 \Big\}.
\]
For $\varepsilon$ small enough, the elliptic Proposition~\ref{prop-omegav} together with Propositions~\ref{prop-vort} and~\ref{prop-higher} gives
 $$ \|v\|_{L^\infty(0,T_{\varepsilon}/\varepsilon;H^3)} \lesssim {\tilde \nu}^{- { 5 \over 2}} \e^{\alpha - {1 \over 2}}+\varepsilon^{\alpha-\frac12}\lesssim {\tilde \nu}^{- { 5 \over 2}} \e^{\alpha - {1 \over 2}}.$$
Next, the second estimate of the elliptic Proposition~\ref{prop-omegav}, together with Proposition~\ref{prop-infty} and the previous estimate, yields
 $$ \| \nabla v \|_{L^\infty(0,T_{\varepsilon}/\varepsilon;L^\infty)} \lesssim \varepsilon^2 + (\varepsilon^\alpha +\varepsilon^{2\alpha} ) \ln (2 + \varepsilon^{-3}{\tilde \nu}^{- { 5 \over 2}} \e^{\alpha - {1 \over 2}}).$$
As $\tilde \nu^{- { 5 \over 2}}\leq \varepsilon^{- { 5N1 \over 2} }$, there exists $\varepsilon_{0}$ such that for all $\varepsilon\leq \varepsilon_{0}$ we have:
\[
 \| \nabla v \|_{L^\infty(0,T_{\varepsilon}/\varepsilon;L^\infty)} + \tilde \nu^K \|v\|_{L^\infty(0,T_{\varepsilon}/\varepsilon;H^2)} \leq \frac{3}4.
\]
Due to the definition of $T_{\varepsilon}$ we deduce that $T_{\varepsilon}=T_{0}$ and that all the estimates stated in Propositions~\ref{prop-vort}, \ref{prop-infty} and \ref{prop-higher} hold for $T=T_{0}$.

Coming back to the original variables yields Theorem~\ref{theo-main}. 
 \bigskip

\noindent
{\bf Acknowledgements.} D.G.V., C.L. and F.R. are partially supported by the Agence Nationale de la Recherche, Project DYFICOLTI, grant ANR-13-BS01-0003-01. 
C.L. is also partially supported by the Agence Nationale de la Recherche, Project IFSMACS, grant ANR-15-CE40-0010. TN's research was supported in part by the NSF under grant DMS-1405728.

\appendix
 
 \section{Traces Lemmas}
 In this section, which verifies that the embedding of $H^1(\Omega^\varepsilon)$ in $
L^2(\pa \Omega^\eps)$ does not depend on $\varepsilon$.
\begin{lemma}\label{lem-trace}
There exists $C$ independent of $\varepsilon$ such that 
 \[
 \| f \|_{L^2(\pa \Omega^\eps)} \leq C \| f \|_{L^2(\Omega^\varepsilon)}^{1/2}\| \partial_{x_{2}} f \|_{L^2(\Omega^\varepsilon)}^{1/2}
 \]
 for any $f\in H^1(\Omega^\varepsilon)$.
\end{lemma}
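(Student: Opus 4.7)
The plan is to reduce the estimate to a one-dimensional Sobolev-type inequality applied slicewise in the vertical direction, then check that the $\varepsilon$-dependence of the geometry does not enter the constant.

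First, I would parametrize the boundary $\partial\Omega^\varepsilon$ by $x_1 \in \TT$ via $x_1 \mapsto (x_1, \varepsilon^{1+\alpha}\eta(x_1/\varepsilon))$. The induced surface measure is $d\sigma = \langle \varepsilon^\alpha \eta'(x_1/\varepsilon) \rangle\, dx_1$, with $\langle \varepsilon^\alpha \eta'\rangle = \sqrt{1+\varepsilon^{2\alpha}|\eta'(x_1/\varepsilon)|^2}$. Since $\eta \in C^\infty(\TT)$, this Jacobian satisfies $1 \le \langle \varepsilon^\alpha \eta'\rangle \le \sqrt{1+\|\eta'\|_{L^\infty}^2}$ uniformly for $\varepsilon \le 1$, so
\[
\|f\|_{L^2(\partial\Omega^\varepsilon)}^2 \le C \int_\TT \big|f(x_1,\varepsilon^{1+\alpha}\eta(x_1/\varepsilon))\big|^2\, dx_1,
\]
with $C$ independent of $\varepsilon$. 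This already isolates the only place where the geometry enters.

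Next, by density I may assume $f \in C^\infty_c(\overline{\Omega^\varepsilon})$, so that $f$ vanishes for $x_2$ large. For each fixed $x_1$, setting $a(x_1) := \varepsilon^{1+\alpha}\eta(x_1/\varepsilon)$, the fundamental theorem of calculus gives
\[
f(x_1, a(x_1))^2 = -\int_{a(x_1)}^{+\infty} \partial_{x_2}(f^2)(x_1, x_2)\, dx_2 = -2\int_{a(x_1)}^{+\infty} f\, \partial_{x_2} f\, dx_2.
\]
Applying Cauchy--Schwarz in $x_2$ on the half-line $(a(x_1),+\infty)$, and then Cauchy--Schwarz again after integrating in $x_1 \in \TT$, yields
\[
\int_\TT |f(x_1,a(x_1))|^2\, dx_1 \le 2\,\|f\|_{L^2(\Omega^\varepsilon)}\,\|\partial_{x_2} f\|_{L^2(\Omega^\varepsilon)},
\]
because the union over $x_1 \in \TT$ of the vertical rays $\{x_1\}\times(a(x_1),+\infty)$ is exactly $\Omega^\varepsilon$. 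Combining with the bound on the surface measure gives the announced inequality.

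The main (and only) subtlety is checking that the constant is genuinely independent of $\varepsilon$: this is guaranteed because the only $\varepsilon$-dependent quantities in the argument are the Jacobian $\langle \varepsilon^\alpha \eta'\rangle$ and the lower endpoint $a(x_1)$, and neither enters the final constant — the first is bounded above uniformly, and the second is simply the defining boundary of the integration domain on the right-hand side. A standard density argument (smooth compactly supported functions on $\overline{\Omega^\varepsilon}$ are dense in $H^1(\Omega^\varepsilon)$) extends the estimate to all $f \in H^1(\Omega^\varepsilon)$.
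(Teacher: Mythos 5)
Your proof is correct and follows essentially the same route as the paper: reduce to smooth compactly supported $f$, write $f^2$ on the boundary via the fundamental theorem of calculus along vertical rays, apply Cauchy--Schwarz, and observe that the Jacobian $\langle \varepsilon^\alpha\eta'\rangle$ is bounded above uniformly in $\varepsilon$. The only cosmetic difference is that you bound the Jacobian first and then integrate, while the paper pulls out $\sup_{x_1}\langle\varepsilon^\alpha\eta'\rangle$ after writing the boundary integral; the substance is identical.
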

\begin{proof}
By a density argument, it is enough to prove the inequality for $f\in C^\infty_{c}(\overline{\Omega^\varepsilon})$.

For any $x_{1}\in \mathbb{T}$, we have
\[
 f^2(x_{1}, \varepsilon^{1+\alpha} \eta(x_{1}/\varepsilon)) 
 = 2\int_{ \varepsilon^{1+\alpha} \eta(x_{1}/\varepsilon)}^\infty f(x_{1},x_{2})\partial_{x_{2}} f(x_{1},x_{2}) \, dx_{2}
\]
hence
\begin{align*}
 \| f \|_{L^2(\pa \Omega^\eps)}^2 = &\int_{0}^1 f^2(x_{1}, \varepsilon^{1+\alpha} \eta(x_{1}/\varepsilon)) \langle \varepsilon^\alpha \eta' (x_{1}/\varepsilon)\rangle \, dx_{1}\\
 \leq & - \sqrt{1 + \varepsilon^{2\alpha} \| \eta' \|_{L^\infty}^2} 2 \int_{\Omega^\varepsilon} f\partial_{x_{2}} f \, dx
 \end{align*}
 which gives the result with $C=\sqrt{2} (1 + \| \eta' \|_{L^\infty}^2)^{1/4}$.
\end{proof}

By a scaling argument, $g(z)=f(\varepsilon z)$ we get the similar version on $\tilde \Omega^\varepsilon$:
\begin{equation}\label{eq-trace}
 \| g\|_{L^2(\pa \tilde \Omega^\eps)} \leq C \| g \|_{L^2(\tilde \Omega^\varepsilon)}^{1/2}\| \partial_{z_{2}} g \|_{L^2(\tilde \Omega^\varepsilon)}^{1/2}
\end{equation}
 for any $g\in H^1(\tilde\Omega^\varepsilon)$.
 
 In a similar way, we prove the following trace lemma which involves only the $\curl$ for divergence free vector fields.
 
\begin{lemma}\label{lem-trace2}
 There exists $C$ independent of $\varepsilon$ such that 
 \[
 \| v \|_{L^2(\pa \tilde \Omega^\eps)} \leq C \| v \|_{L^2(\tilde \Omega^\varepsilon)}^{1/2} \| \curl v \|_{L^2(\tilde \Omega^\varepsilon)}^{1/2} 
 \]
 for any $v \in H^1(\tilde\Omega^\varepsilon)$ such that $\Div v=0$ and $v\cdot n\vert_{\pa \tilde \Omega^\eps}=0$.
\end{lemma}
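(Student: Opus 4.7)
The key is the pointwise identity
\[
\partial_{z_2}(v_1^2 - v_2^2) \;=\; 2\,\partial_{z_1}(v_1 v_2) - 2\, v_1\, \omega, \qquad \omega := \curl v,
\]
which follows by combining $\partial_{z_2} v_1 = \partial_{z_1} v_2 - \omega$ (the definition of $\curl v$) with $\partial_{z_2} v_2 = -\partial_{z_1} v_1$ (the divergence-free condition). Integrating this identity over $\tilde \Omega^\varepsilon$ will let me reduce the boundary $L^2$ norm of $v$ to the interior product $\int v_1 \omega$, which Cauchy--Schwarz then controls by $\|v\|_{L^2}\|\omega\|_{L^2}$.

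Concretely, a standard density/decay argument for $v \in H^1$ gives
\[
\int_{\tilde \Omega^\varepsilon} \partial_{z_2}(v_1^2 - v_2^2)\, dz \;=\; -\int_0^{1/\varepsilon} (v_1^2 - v_2^2)(z_1, \varepsilon^\alpha \eta(z_1))\, dz_1 ,
\]
while an application of the divergence theorem to the vector field $(v_1 v_2, 0)$ on $\tilde \Omega^\varepsilon$ produces
\[
\int_{\tilde \Omega^\varepsilon} \partial_{z_1}(v_1 v_2)\, dz \;=\; \int_0^{1/\varepsilon} \varepsilon^\alpha \eta'(z_1)\, v_1 v_2\big|_{z_2 = \varepsilon^\alpha \eta(z_1)}\, dz_1 ,
\]
which does not vanish despite the periodicity in $z_1$, because the lower boundary depends on $z_1$. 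This is where the tangency condition enters: on $\pa \tilde \Omega^\varepsilon$, the identity $v \cdot n = 0$ reads $v_2 = \varepsilon^\alpha \eta'(z_1) v_1$, so $v_2^2 = \varepsilon^{2\alpha}|\eta'|^2 v_1^2$ and $v_1 v_2 = \varepsilon^\alpha \eta' v_1^2$ at the boundary. Plugging these back and collecting terms, all $(\varepsilon^\alpha \eta')^2 v_1^2$ contributions assemble consistently into
\[
\int_0^{1/\varepsilon} (1 + \varepsilon^{2\alpha} |\eta'|^2)\, v_1^2\big|_{z_2 = \varepsilon^\alpha \eta(z_1)}\, dz_1 \;=\; 2 \int_{\tilde \Omega^\varepsilon} v_1\, \omega\, dz .
\]

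Since $|v|^2 = v_1^2 \langle \varepsilon^\alpha \eta'\rangle^2$ on the boundary and $d\sigma = \langle \varepsilon^\alpha \eta' \rangle\, dz_1$, and since $\varepsilon^\alpha |\eta'|$ is uniformly bounded, the left-hand side is equivalent (up to a constant independent of $\varepsilon$) to $\|v\|_{L^2(\pa \tilde \Omega^\varepsilon)}^2$, and Cauchy--Schwarz on the right yields exactly the claimed inequality.

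The main subtlety, and the reason a direct adaptation of Lemma~\ref{lem-trace} does not suffice, is that only $\partial_{z_2} v_1$ appears naturally paired with $\omega$; to close the estimate one must trade $\partial_{z_2} v_2$ against $\partial_{z_1} v_1$ via $\Div v = 0$, and then integrate by parts in the periodic variable. The resulting boundary contribution from the curvy bottom is precisely converted into a harmless $v_1^2$ term by the tangency relation $v_2 = \varepsilon^\alpha \eta' v_1$. Hence the two hypotheses $\Div v = 0$ and $v \cdot n = 0$ are used in an essential and intertwined way.
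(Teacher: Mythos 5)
Your proof is correct. It rests on the same two ingredients as the paper's argument (divergence-free plus tangency, used to convert a normal derivative of a quadratic in $v$ into a pairing with $\curl v$ plus a divergence term), but the execution differs in a meaningful way. The paper tracks the quadratic form $v_1^2+v_2^2$ and writes $\tfrac12\partial_{z_2}|v|^2 = -v_1\,\curl v + \nabla\!\cdot(v\,v_2)$, so the divergence contribution integrates to a boundary term $\int_{\partial}\, (v\cdot n)\, v_2\, d\sigma$ that simply vanishes; the tangency condition is used once, implicitly, through that cancellation. You instead track $v_1^2-v_2^2$ and $v_1 v_2$ via the identity $\partial_{z_2}(v_1^2-v_2^2)=2\partial_{z_1}(v_1v_2)-2v_1\omega$; here the boundary contribution from $\partial_{z_1}(v_1v_2)$ does \emph{not} vanish, because the lower boundary wanders with $z_1$, and you must invoke the tangency relation $v_2=\varepsilon^\alpha\eta' v_1$ explicitly to recombine the two boundary integrands into the nonnegative quantity $(1+\varepsilon^{2\alpha}|\eta'|^2)\,v_1^2$. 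The two identities are in fact linearly related through $\Div v = 0$, so the approaches are algebraic rearrangements of one another; the paper's choice collapses the boundary term more cleanly, while yours makes the role of the impermeability condition more visibly geometric. Either way, the uniform bound in $\varepsilon$ follows from $\sup\langle\varepsilon^\alpha\eta'\rangle \leq \sqrt{1+\|\eta'\|_{L^\infty}^2}$, as you note.
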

\begin{proof}
As in the previous lemma, we only perform the proof for smooth $v$.

From the proof of the previous lemma, we get
\begin{align*}
 \| v \|_{L^2(\pa \tilde \Omega^\eps)}^2=\| v_{1} \|_{L^2(\pa \tilde \Omega^\eps)}^2 + \| v_{2} \|_{L^2(\pa \tilde \Omega^\eps)}^2 
 &\leq - \sqrt{1 + \varepsilon^{2\alpha} \| \eta' \|_{L^\infty}^2} 2 \int_{\tilde \Omega^\varepsilon} (v_{1}\partial_{z_{2}} v_{1} + v_{2}\partial_{z_{2}} v_{2})\, dz\\
 &\leq - \sqrt{1 + \varepsilon^{2\alpha} \| \eta' \|_{L^\infty}^2} 2 
 \int_{\tilde \Omega^\varepsilon} (-v_{1} \curl v + v \cdot \nabla v_{2})\, dz\\
 &\leq 2 \sqrt{1 + \varepsilon^{2\alpha} \| \eta' \|_{L^\infty}^2} \| v \|_{L^2} \| \curl v \|_{L^2}
\end{align*}
where we have used $\Div v=0$ and $v\cdot n=0$. This ends the proof.
\end{proof}

A corollary of the previous lemma is the following.
\begin{lemma}\label{lem-trace3}
 There exists $C$ independent of $\varepsilon$ such that 
 \[
 \| \nabla v \|_{L^2(\tilde\Omega^\varepsilon)} \leq \| \curl v \|_{L^2(\tilde \Omega^\varepsilon)} + C\varepsilon^{\alpha}\| v \|_{L^2(\tilde \Omega^\varepsilon)}
 \]
 for any $v \in H^1(\tilde\Omega^\varepsilon)$ such that $\Div v=0$ and $v\cdot n\vert_{\pa \tilde \Omega^\eps}=0$.
\end{lemma}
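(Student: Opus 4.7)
The plan is to begin from the 2D pointwise identity
$$|\nabla v|^2 = |\curl v|^2 + |\Div v|^2 - 2\det(\nabla v),$$
which, combined with $\Div v = 0$ and integration over $\tilde \Omega^\varepsilon$, reduces matters to controlling $-2\int_{\tilde\Omega^\varepsilon} \det(\nabla v)$. By density it suffices to treat smooth $v$.

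To evaluate this integral I would rewrite the Jacobian determinant as the divergence
$$\det(\nabla v) = \partial_{z_1}(v_1 \partial_{z_2} v_2) - \partial_{z_2}(v_1 \partial_{z_1} v_2)$$
and apply Stokes' theorem (the contribution at $z_2 \to +\infty$ vanishes since $v \in H^1$). Parametrizing $\partial \tilde \Omega^\varepsilon$ by $z_1$, the factor $\langle \varepsilon^\alpha \eta'\rangle$ in the arc-length element cancels against the normalization of the inward normal $n = (-\varepsilon^\alpha \eta', 1)/\langle \varepsilon^\alpha \eta'\rangle$, and the boundary integral simplifies to
$$-2 \int_{\mathbb{T}_{1/\varepsilon}} v_1(z_1, \varepsilon^\alpha\eta) \, \frac{d}{dz_1}\big[v_2(z_1, \varepsilon^\alpha \eta(z_1))\big] \, dz_1,$$
where I have recognized $\partial_{z_1} v_2 + \varepsilon^\alpha \eta' \partial_{z_2} v_2$ as the total $z_1$-derivative along the boundary.

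The next step exploits the impermeability condition $v\cdot n = 0$, which on the boundary reads $v_2 = \varepsilon^\alpha \eta'(z_1) v_1$. Substituting and integrating by parts once in $z_1$ (valid since $\eta$ is $1$-periodic and $\mathbb{T}_{1/\varepsilon}$ has period an integer multiple of $1$), the cross term involving $\eta' \frac{d(v_1^2)}{dz_1}$ gives $+\varepsilon^\alpha \int \eta'' v_1^2 dz_1$, and the whole boundary integral collapses to $-\varepsilon^\alpha \int_{\mathbb{T}_{1/\varepsilon}} \eta''(z_1) v_1^2(z_1,\varepsilon^\alpha \eta) \, dz_1$. This is bounded by $\varepsilon^\alpha \|\eta''\|_{L^\infty} \|v\|_{L^2(\partial \tilde\Omega^\varepsilon)}^2$, and the preceding Lemma~\ref{lem-trace2} converts this further to $C\varepsilon^\alpha \|v\|_{L^2(\tilde\Omega^\varepsilon)} \|\curl v\|_{L^2(\tilde\Omega^\varepsilon)}$.

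At this stage the inequality reads $\|\nabla v\|_{L^2}^2 \le \|\curl v\|_{L^2}^2 + C\varepsilon^\alpha \|v\|_{L^2} \|\curl v\|_{L^2}$. To recover coefficient $1$ in front of $\|\curl v\|_{L^2}$ exactly as stated, observe that the right-hand side is bounded above by $(\|\curl v\|_{L^2} + \tfrac{C}{2}\varepsilon^\alpha \|v\|_{L^2})^2$, whence the conclusion follows on taking square roots. The main subtlety is the boundary-term bookkeeping: the cancellation that replaces a generic $\nabla v$-boundary trace by an integrable $v^2$ trace decorated with a single factor of $\varepsilon^\alpha$ is what makes the constants come out exactly as advertised; without the integration by parts in $z_1$ one would be left with an uncontrollable $\|\nabla v\|_{L^2(\partial \tilde \Omega^\varepsilon)}$ contribution.
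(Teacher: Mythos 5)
Your argument is correct and follows essentially the same route as the paper's proof: the identity $|\nabla v|^2=|\curl v|^2+|\Div v|^2-2\det(\nabla v)$ is the same as the paper's writing $|\nabla v|^2=(\curl v)^2+(\Div v)^2+2\,\nabla v_1\cdot\nabla^\perp v_2$ (since $\nabla v_1\cdot\nabla^\perp v_2=-\det\nabla v$), and the subsequent Stokes reduction to a boundary term, the use of $v\cdot n=0$ together with a tangential integration by parts to produce $-\varepsilon^\alpha\int\eta''\,v_1^2$, the appeal to Lemma~\ref{lem-trace2}, and the final completion-of-square step all match the paper's proof line for line. The only differences are notational.
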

\begin{proof}
We compute
\[
\| \nabla v \|_{L^2(\tilde\Omega^\varepsilon)}^2 = \int_{\tilde\Omega^\varepsilon} (\curl v)^2 + (\Div v)^2 + 2\int_{\tilde\Omega^\varepsilon} \nabla v_{1}\cdot \nabla^\perp v_{2}= \| \curl v \|_{L^2(\tilde \Omega^\varepsilon)}^2 -2\int_{\pa \tilde \Omega^\eps} v_{1} \nabla v_{2} \cdot \tau.
\]
From the expression of $n$ in terms of $\eta$ \eqref{def-ntau}, the condition $v\cdot n$ reads $-\varepsilon^\alpha \eta' v_{1} +v_{2}=0$ hence 
\[
\langle \varepsilon^\alpha \eta'\rangle \partial_{\tau} v_{2}= \partial_{x_{1}} [v_{2} (x_{1},\varepsilon^\alpha \eta (x_{1})) ] = \varepsilon^\alpha \eta'' v_{1}+\varepsilon^\alpha \eta' \partial_{x_{1}} [v_{1} (x_{1},\varepsilon^\alpha \eta (x_{1})) ]
\]
hence
\[
\int_{\pa \tilde \Omega^\eps} v_{1} \nabla v_{2} \cdot \tau = \int_{\mathbb{T}_{\frac1\varepsilon}} v_{1} \partial_{\tau} v_{2} \langle \varepsilon^\alpha \eta'\rangle 
 =\varepsilon^\alpha \int_{\mathbb{T}_{\frac1\varepsilon}} \eta'' v_{1}^2 + \frac12 \eta' \partial_{x_{1}}v_{1}^2 = \frac{\varepsilon^\alpha }2 \int_{\mathbb{T}_{\frac1\varepsilon}} \eta'' v_{1}^2 =\frac{\varepsilon^\alpha }2\int_{\pa \tilde \Omega^\eps} \frac{\eta'' v_{1}^2}{ \langle \varepsilon^\alpha \eta'\rangle}.
\]
Then we use Lemma~\ref{lem-trace2} to conclude
\begin{align*}
 \| \nabla v \|_{L^2(\tilde\Omega^\varepsilon)}^2& \leq \| \curl v \|_{L^2(\tilde \Omega^\varepsilon)}^2 + \varepsilon^\alpha \|\eta''\|_{L^\infty} \| v_{1} \|_{L^2(\pa \tilde \Omega^\eps)}^2
 \\&\leq \| \curl v \|_{L^2(\tilde \Omega^\varepsilon)}^2 + C \varepsilon^\alpha \| v \|_{L^2(\tilde \Omega^\varepsilon)} \| \curl v \|_{L^2(\tilde \Omega^\varepsilon)}\\
 &\leq (\| \curl v \|_{L^2(\tilde \Omega^\varepsilon)} + \tilde C\varepsilon^\alpha \| v \|_{L^2(\tilde \Omega^\varepsilon)}) ^2.
\end{align*}
\end{proof}

 \section{Elliptic estimates on $\mathbb{T} \times \mathbb{R}_{+}$}
 In this section we consider the equation
 \begin{equation}
 \label{elliptic1} \Delta \Psi = F, \quad (x,z) \in \mathbb{T} \times \mathbb{R}_{+},
 \end{equation}
 with the homogeneous Dirichlet boundary condition 
 \begin{equation}
 \label{elliptic2}\Psi(x,0)= 0.
 \end{equation}
 We always assume that $\int_{\mathbb{T}} F\, dx = 0$. We can consider 
 the solution $\Psi$ of \eqref{eqPsi} given by
 \begin{equation}
 \label{solPsi} \Psi(x,z)= \int_{0}^{+ \infty} \sum_{k \neq 0} G_{k}(z,y) F_{k}(y) e^{ik \cdot x}\, dy, \quad
 x \in \mathbb{T}, \, z\geq 0
 \end{equation}
 where the Green function of the Dirichlet problem is given by 
 \begin{equation}
\label{greendirichlet}G_{k}(z,y)= - \left\{\begin{array}{ll} {1 \over |k|} e^{-|k| z} \sinh (|k| y), z>y, \\
 {1 \over |k|} e^{-|k| y} \sinh (|k| z), z<y.
 \end{array} \right.\end{equation}
The goal of this section is to prove the following estimates on the solution of the Laplace problem.
 \begin{proposition}
 \label{propEl}
 We have the estimates:
 \begin{align}
 \label{El1}
 & \forall s \geq 1, \quad
 \| \nabla \Psi \|_{H^s(\mathbb{T} \times \mathbb{R}_{+})} \lesssim \| F \|_{H^{s-1}(\mathbb{T} \times \mathbb{R}_{+})}; \\
 \label{El2} & \|D^2 \Psi \|_{L^\infty(\mathbb{T}\times \mathbb{R}_{+})} \lesssim 1 +
 \| F \|_{L^\infty(\mathbb{T}\times \mathbb{R}_{+})}\ln\left( 2 + \|F \|_{H^2(\mathbb{T}\times \mathbb{R}_{+})}
 \right); \\
 \label{El3} & \|\nabla \Psi \|_{L^\infty(\mathbb{T} \times \mathbb{R}_{+})} \lesssim 1 + \| F \|_{L^2(\mathbb{T} \times \mathbb{R}_{+})}
 \left( \ln \left( 2 + \|F \|_{H^1(\mathbb{T}\times \mathbb{R}_{+})}\right) \right)^{1 \over 2}; \\
 \label{El4} &
 \|\nabla \Psi \|_{L^\infty(\mathbb{T} \times \mathbb{R}_{+})} \lesssim 1 + \| H \|_{L^\infty(\mathbb{T} \times \mathbb{R}_{+})}
 \ln \left( 2 + \|H\|_{H^2(\mathbb{T}\times \mathbb{R}_{+})}\right), \quad \mbox{ if } F = \Div H. 
 \end{align}
 \end{proposition}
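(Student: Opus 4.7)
My plan for Proposition~\ref{propEl} is to combine the explicit Fourier-Green representation \eqref{solPsi}--\eqref{greendirichlet} with a Littlewood-Paley decomposition in the tangential variable $x$. The key observation is that for each Fourier mode $k\in\mathbb{Z}\setminus\{0\}$, the transverse profile $\Psi_k(z)$ satisfies the one-dimensional ODE $(\partial_z^2-k^2)\Psi_k=F_k$ with $\Psi_k(0)=0$, and the kernel $G_k(z,y)$ has an $e^{-|k||z-y|}/|k|$ structure, so that the natural transverse scale is automatically $|k|^{-1}$. This effectively builds frequency localization into both variables at once, and all four estimates will follow by balancing low- and high-frequency contributions.

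For the energy estimate \eqref{El1} I would proceed mode by mode: applying Young's convolution inequality to $\Psi_k(z)=\int_0^\infty G_k(z,y)F_k(y)\,dy$ together with its first two $z$-derivatives yields the standard elliptic bound $k^2\|\Psi_k\|_{L^2_z}+|k|\,\|\partial_z\Psi_k\|_{L^2_z}+\|\partial_z^2\Psi_k\|_{L^2_z}\lesssim \|F_k\|_{L^2_z}$; summing over $k$ via Plancherel in $x$ gives \eqref{El1} for $s=1$, and for general $s$ by commuting the equation with tangential derivatives $\partial_x^s$ and trading $\partial_z^2$ against $\partial_x^2$ and $F$.

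For the $L^\infty$ bounds \eqref{El2}--\eqref{El4}, I would split the tangential frequencies at level $2^N$ and optimize $N$. The crucial block estimates come from combining the mode-by-mode elliptic bound with a Bernstein-type inequality: for data tangentially localized at frequency $2^j$, the exponential factor in $G_k$ concentrates the transverse variable to scale $2^{-j}$, yielding bounds of the form $\|P_j D^2\Psi\|_{L^\infty}\lesssim \|P_j F\|_{L^\infty}$ and $\|P_j \nabla\Psi\|_{L^\infty}\lesssim \|P_j F\|_{L^2}$. From here, \eqref{El2} follows by summing $N\|F\|_{L^\infty}+2^{-N}\|F\|_{H^2}$ and choosing $N\sim \ln(2+\|F\|_{H^2}/\|F\|_{L^\infty})$; \eqref{El3} requires an additional Cauchy-Schwarz on the low modes $\sum_{j\leq N}\|P_j F\|_{L^2}\leq N^{1/2}\|F\|_{L^2}$, which is the source of the $(\ln)^{1/2}$ loss; and \eqref{El4} exploits that when $F=\Div H$ the mapping $H\mapsto \nabla\Psi$ is of order zero, producing $N\|H\|_{L^\infty}+2^{-N}\|H\|_{H^2}$ before optimization.

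The main obstacle will be justifying Littlewood-Paley and Bernstein arguments rigorously on the half-space with Dirichlet boundary. A clean workaround is the odd extension across $\{z=0\}$: setting $\tilde\Psi(x,z)=-\Psi(x,-z)$ and $\tilde F(x,z)=-F(x,-z)$ for $z<0$ reduces the problem to $\Delta\tilde\Psi=\tilde F$ on $\mathbb{T}\times\mathbb{R}$, where standard two-dimensional Littlewood-Paley theory and the corresponding Bernstein inequalities apply with the relevant $L^p$ and $H^s$ norms preserved under reflection. An alternative is to bypass any extension and derive the Bernstein-type estimates directly from the exponential decay of $G_k$ via Young's inequality in $z$, which makes the frequency-localized structure visible without any auxiliary construction.
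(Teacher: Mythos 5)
Your plan is correct and tracks the paper's proof very closely in its core mechanisms: the mode-by-mode Young estimate for \eqref{El1}, the explicit Green kernel with $e^{-c|k||z-y|}$ decay, the tangential frequency split at $2^{N}$, and the optimization $N\sim\ln(2+\cdots)$ producing the logarithmic loss in \eqref{El2} and \eqref{El4} and the $\sqrt{\log}$ in \eqref{El3}. Where you diverge is the proposed odd extension to $\mathbb{T}\times\mathbb{R}$: the paper does not extend, but works directly on the half-space and proves the frequency-block kernel bounds by hand (its Lemma~\ref{lemmult}, which transfers $L^1$-kernel estimates from $\mathbb{R}$ to $\mathbb{T}$ via Poisson summation, plays the role that your Bernstein inequalities would play). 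The extension is a legitimate shortcut --- it reduces everything to standard two-dimensional Littlewood--Paley theory and dispenses with Lemma~\ref{lemmult} --- provided you check that the odd extension of $\Psi$ solves $\Delta\tilde\Psi=\tilde F$ in the distributional sense, which it does precisely because $\Psi(\cdot,0)=0$ makes $\partial_z\tilde\Psi$ continuous across $z=0$ so no singular layer appears; and that the $H^{2}$ norm of the odd extension of $F$ remains controlled, which is fine since only two $z$-derivatives are needed. One further small difference: for \eqref{El3} the paper does not use a dyadic decomposition at all but sums directly over $k$ with a single cutoff parameter $M$; your dyadic variant yields the same bound, so this is purely cosmetic.
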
 

 Let us first prove \eqref{El1} for $s=1$.
 From \eqref{greendirichlet}, we have that 
 $$ |G_{k}(z, y)| \lesssim {1 \over |k|} e^{- 2c_{0}|k| |z-y|}, \quad |\partial_{z}G_{k}(z, y)| \lesssim e^{- 2c_{0}|k| |z-y|}$$
 for some $c_{0}>0.$
By using Parseval and Young, we easily get that
 $$ \|D^2 \Psi \|_{L^2(\mathbb{T} \times \mathbb{R}_{+})}^2 \lesssim
 \sum_{k}\int_{0}^{+ \infty} \Big| \int_{0}^{+\infty} |k| e^{- c_{0}|k||z- y|} |F_{k}(y)| \, dy \Big|^2 \,dz
 \lesssim \sum_{k \neq 0} \|F_{k}\|_{L^2(\mathbb{R}_{+})}^2 \lesssim \|F\|_{L^2(\mathbb{T}\times
 \mathbb{R}_{+})}^2.$$
 In particular, we have obtained that
 $$ \| \nabla \Psi \|_{L^2(\mathbb{T} \times \mathbb{R})} + \|D^2 \Psi \|_{L^2(\mathbb{T} \times \mathbb{R})} \lesssim \|F\|_{L^2(\mathbb{T} \times \mathbb{R})}.$$
 Higher order estimates follow exactly in the same way using $|k|^{\beta_{1}+1} |\partial_{z}^{\beta_{2}} G_{k}| \lesssim |k|^{\beta_{1}+\beta_{2}}e^{- 2c_{0}|k| |z-y|}$ with $\beta_{1}+\beta_{2} \leq s$.
 
 Let us prove \eqref{El2}. 
 We will use a Littlewood Paley partition of unity on $\mathbb{R}$:
 $$ 1 = \chi_{0}(\xi) + \sum_{n \geq 1} \chi_{n}(\xi)$$
 with $ \chi_{n}(\xi)= \chi(\xi/2^n)$
 and $\chi_{0}$ supported in a ball of radius $<1$, $\chi$ supported in an annulus.
 
 For a function $m(\xi)$ defined on $\mathbb{R}$, we define a Fourier multiplier
 $m(D_{x})$ on $\mathbb{T}$ by
 $$ m(D_{x}) f (x)= \sum_{k \in \mathbb{Z}} m(k) f_{k}(\xi) e^{ik x}.$$
 The crucial continuity lemma that we will use is the following.
 \begin{lem}
 \label{lemmult}
 Assume that $\mathcal{F}^{-1}m (x): = \int_{\mathbb{R}} e^{ix \cdot \xi} m(\xi)\, d\xi \in L^1(\mathbb{R})$.
 Then for every $p \in [1, + \infty]$, $m(D)$ is a bounded operator on 
 $L^p(\mathbb{T})$ and 
 $$ \| m(D)\|_{\mathcal{L}(L^p(\mathbb{T} \times \mathbb{R}))} \leq \|\mathcal{F}^{-1}m\|_{L^1(\mathbb{R})}.$$
 \end{lem}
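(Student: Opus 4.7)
The plan is to realize $m(D_x)$ as convolution on $\mathbb{T}$ against a fixed $L^1$-kernel (the periodization of $g := \mathcal{F}^{-1} m$), and then apply Young's inequality on $\mathbb{T}$ together with Fubini in the $z$-variable.

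\textbf{Step 1 (periodize the kernel).} Define
\[ K(x) := \sum_{n \in \mathbb{Z}} g(x + 2\pi n), \qquad x \in \mathbb{T}. \]
By Tonelli, $K$ is well-defined a.e., lies in $L^1(\mathbb{T})$, and
\[ \|K\|_{L^1(\mathbb{T})} \leq \int_0^{2\pi} \sum_{n} |g(x+2\pi n)|\,dx = \int_{\mathbb{R}} |g(y)|\,dy = \|\mathcal{F}^{-1}m\|_{L^1(\mathbb{R})}. \]
Using Fubini once more, the $k$-th Fourier coefficient of $K$ on $\mathbb{T}$ computes to
\[ \int_{0}^{2\pi} K(x) e^{-ikx}\,dx \;=\; \sum_{n} \int_{0}^{2\pi} g(x+2\pi n) e^{-ikx}\,dx \;=\; \int_{\mathbb{R}} g(y) e^{-iky}\,dy, \]
which (by Fourier inversion, valid since $g \in L^1(\mathbb{R})$ and with the convention fixed in the statement) equals $2\pi\, m(k)$. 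Hence $K \ast_{\mathbb{T}} f = m(D_x) f$ for every trigonometric polynomial $f$ in $x$, where $\ast_{\mathbb{T}}$ denotes the normalized convolution on $\mathbb{T}$.

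\textbf{Step 2 (Young + Fubini).} For fixed $z \in \mathbb{R}$, Young's convolution inequality on the compact group $\mathbb{T}$ gives, for every $p \in [1,\infty]$,
\[ \|m(D_x) f(\cdot, z)\|_{L^p(\mathbb{T})} \;\leq\; \|K\|_{L^1(\mathbb{T})}\, \|f(\cdot, z)\|_{L^p(\mathbb{T})} \;\leq\; \|\mathcal{F}^{-1}m\|_{L^1(\mathbb{R})}\, \|f(\cdot, z)\|_{L^p(\mathbb{T})}. \]
Raising to the $p$-th power and integrating in $z$ (Fubini), or taking essential supremum if $p = \infty$, yields the desired estimate on $L^p(\mathbb{T}\times\mathbb{R})$. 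Extension from trigonometric polynomials to general $f \in L^p(\mathbb{T}\times\mathbb{R})$ is by density (after truncating in $z$ if needed when $p=\infty$).

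\textbf{Main obstacle.} There is essentially none: the only point requiring mild care is the computation of the Fourier coefficients of $K$ under the weak hypothesis $g \in L^1(\mathbb{R})$ alone, which precludes any pointwise manipulation of the Poisson sum; this is circumvented by the Fubini argument above, without needing absolute convergence of $\sum_n g(x+2\pi n)$. Everything else is a direct application of Young's inequality and Fubini, so the estimate in $\mathcal{L}(L^p(\mathbb{T}\times\mathbb{R}))$ follows with the stated constant $\|\mathcal{F}^{-1}m\|_{L^1(\mathbb{R})}$.
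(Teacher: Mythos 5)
Your proof is correct and follows essentially the same route as the paper: periodize the Euclidean kernel $\mathcal{F}^{-1}m$ onto $\mathbb{T}$, note $\|K\|_{L^1(\mathbb{T})} \le \|\mathcal{F}^{-1}m\|_{L^1(\mathbb{R})}$ by Tonelli, identify $m(D_x)$ with convolution against $K$, and apply Young's inequality together with Fubini in $z$. The only (cosmetic) difference is that the paper invokes the Poisson summation formula as a black box, whereas you re-derive the needed identity by computing the Fourier coefficients of the periodization directly, which is a slightly more careful route under the bare hypothesis $\mathcal{F}^{-1}m \in L^1$.
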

 
 By using the Green function \eqref{greendirichlet}, we can use again the representation
 \eqref{solPsi}.
 Note that the $k=0$ frequency is not present since $F_{0}= 0$ and that we can replace
 in the sum $G_{k}$ by the function
 $G(k,z,y)= G_{k}(z,y) ( 1 - \chi_{0}(100 k))$
 which is defined on $ \mathbb{R} \times \mathbb{R}_{+}^2$ and is smooth with respect to $k$.
 We can thus write for $\alpha \in \mathbb{N}^2$ with $|\alpha|= 2$, $\alpha \neq (0, 2)$,
 $$ \partial^\alpha \Psi(x,z) =\partial_{x}^{\alpha_{1}}\partial_{z}^{\alpha_{2}} \Psi(x,z) = \int_{0}^{+ \infty} \sum_{k} G^\alpha (k,z,y) F_{k}(y) e^{ik \cdot x}\, dy, \quad
 x \in \mathbb{T}, \, z\geq 0$$ 
 where $G^\alpha$ satisfies the estimate
 \begin{equation}
 \label{GCZ} |G^\alpha(k,z,y)| \lesssim |k| e^{- c_{0} |k| |z-y|}.
 \end{equation}
 By using the definition of Fourier multipliers on the torus, we can rewrite this expression as
 $$ \partial^\alpha \Psi(x,z)= \int_{0}^{+ \infty} G^\alpha(D_{x}, z, y) F(\cdot, y) \, dy.$$
 With the help of the Littlewood-Paley decomposition, we write
 $$ \partial^\alpha \Psi(x,z)= \int_{0}^{+ \infty} \Big( \sum_{0<n \leq N} G^\alpha(D_{x}, z, y)
 \chi_{n}(D_{x} ) F(\cdot, y) + \sum_{n >N} G^\alpha(D_{x}, z, y)
 \chi_{n}(D_{x}) F(\cdot, y) \Big) \, dy$$
 For every $z$ and $y$, let us study the inverse Fourier transform of the kernel
 $G^\alpha(\xi, z, y)
 \chi_{n}(\xi )$ which is given by
 $$ \mathcal{F}^{-1}_{\xi}( G^\alpha \chi_{n}) (x,z,y)= \int_{\mathbb{R}} e^{ix \cdot \xi} G^\alpha (\xi, z, y) 
 \chi({\xi \over 2^n}) \, d\xi.$$ 
 By using the pointwise estimate on $G^\alpha$, we have
 $$ | \mathcal{F}^{-1}_{\xi}( G^\alpha \chi_{n}) (x,z,y)| \lesssim 2^{n}e^{-2^{n-2} |z-y|} 
 2^{n}$$
 and by integration by parts, we also have that for every $m$
 $$ |x^m| | \mathcal{F}^{-1}_{\xi}( G^\alpha \chi_{n}) (x,z,y) | \lesssim 2^{n}e^{-2^{n-2} |z-y|} 2^{n(1-m)}.$$
 This yields
 $$ | \mathcal{F}^{-1}_{\xi}( G^\alpha \chi_{n}) (x,z,y) | \lesssim 2^{n}e^{-2^{n-2} |z-y|} { 2^n \over 1 + (2^n |x|)^m}$$
 and hence by taking $m=2$, we get that
 $$ \| \mathcal{F}^{-1}_{\xi}( G^\alpha \chi_{n}) (\cdot ,z,y) \|_{L^1(\mathbb{R})}
 \lesssim 2^{n}e^{-2^{n-2} |z-y|}.$$
 From lemma~\ref{lemmult}, we thus get that for every $z$, $y$, 
 $$ \| G^\alpha(D_{x}, z, y)
 \chi_{n}(D_{x} ) F(\cdot, y)\|_{L^\infty_{x}} \lesssim 2^{n}e^{-2^{n-2} |z-y|} \|F(\cdot, y)\|_{L^\infty_{x}}.$$
 This allows to estimate
 \begin{multline*} \Big| \int_{0}^{+ \infty} \Big( \sum_{0<n \leq N} G^\alpha(D_{x}, z, y)
 \chi_{n}(D_{x} ) F(\cdot, y)\Big) \, dy \Big| \\
 \lesssim \int_{0}^{+ \infty} \sum_{0<n \leq N} 2^{n}e^{-2^{n-2} |z-y|} \|F(\cdot, y)\|_{L^\infty_{x}} \, dy
 \lesssim \sum_{0<n \leq N} \|F\|_{L^\infty_{x,y}} \lesssim N \|F\|_{L^\infty(\mathbb{T}
 \times \mathbb{R}_{+})}.
 \end{multline*}
 For the other sum, 
 we write for $\delta >0$ small ($\delta <1/2$) 
 $$ G^\alpha(D_{x}, z, y)
 \chi_{n}(D_{x}) F(\cdot, y)= {G^\alpha(D_{x}, z, y)
 \chi_{n}(D_{x})\over |D_{x}|^\delta} ( |D_{x}|^\delta F(\cdot, y))$$
 so that we obtain 
 $$ \|G^\alpha(D_{x}, z, y)
 \chi_{n}(D_{x}) F(\cdot, y) \|_{L^\infty_{x}} \lesssim 2^{-n \delta} 2^{n}e^{-2^{n-2} |z-y|} \left\| |D|^{\delta}F(\cdot, y)\right\|_{L^\infty_{x}} \lesssim 
 2^{-n \delta} 2^{n}e^{-2^{n-2} |z-y|} \|F(\cdot, y)\|_{H^{1}_{x}}$$
 where the last estimate comes from the one-dimensional Sobolev embedding.
 This yields
 $$
 \Big| \int_{0}^{+ \infty} \Big( \sum_{n > N} G^\alpha(D_{x}, z, y)
 \chi_{n}(D_{x} ) F(\cdot, y)\Big) \, dy \Big| \lesssim \sum_{n>N} 2^{-n \delta} \|F\|_{L^\infty_{y} H^1_{x}}
 \lesssim 2^{-N \delta} \|F\|_{H^2(\mathbb{T}\times \mathbb{R}_{+})}
 $$
 by using again the one-dimensional Sobolev embedding.
 
 We have thus proven that
 $$ \| \partial^\alpha \Psi\|_{L^\infty}
 \lesssim N \| F\|_{L^\infty} + 2^{-N \delta} \|F\|_{H^2}, $$
and hence we obtain the estimate \eqref{El2} by choosing $N$ such that
 $2^{N\delta} = 2 + \|F\|_{H^2}$ for all the second order derivatives except $\partial_{z}^2 \Psi$.
 To get the missing one, it suffices to use directly the equation as usual.
 
 Let us prove \eqref{El3} which is easier. We observe that we can write
 $$(\nabla \Psi)_{k}(z) = \int_{0}^{+ \infty} G^{(1)}_{k}(z,y) F_{k}(y) \, dy, $$
 where the Green function $G^{(1)}$ is bounded by
 $$ |G^{(1)}_{k}(z,y)| \lesssim e^{- c_{0}|k||z-y]}.$$
 Consequently by using Cauchy-Schwarz, we find
 $$ \|(\nabla \Psi)_{k}\|_{L^\infty_{z}} \lesssim { 1 \over |k|^{1 \over 2}} \|F_{k}\|_{L^2_{z}}.$$
 This yields for some $M \geq 1$ to be chosen,
 $$ \| \nabla \Psi \|_{L^\infty}
 \lesssim \sum_{0<|k| \leq M} {1 \over |k|^{1 \over 2}} \|F_{k}\|_{L^2_{z}} + \sum_{|k|>M} { 1 \over |k|^{3 \over 2}}
 \||k| F_{k}\|_{L^{2}_{z}}$$
 and hence by using Cauchy-Schwarz and Parseval, 
 $$ \| \nabla \Psi \|_{L^\infty} \lesssim (\ln( 1 + M))^{1 \over 2} \|F \|_{L^2} + { 1 \over M} \|F \|_{H^1}.$$
 By taking $M= 1 + \| F \|_{H^1}$, we find \eqref{El3}. 
 
 Let us finally prove \eqref{El4}. Since $F= \nabla \cdot H$,
 we can integrate by parts to obtain that for $i= x, \, z$, 
 $$ \partial_{i} \Psi= H_{2} \delta_{i=z} + \int_{0}^{+ \infty} \sum_{k\neq 0} G^{i} (k,z,y) H(y) e^{ik \cdot x}\, dy, \quad
 x \in \mathbb{T}, \, z\geq 0$$
 where the matrix kernel $G^i$ still satisfies the estimate \eqref{GCZ}.
 Consequently, we can proceed as in the proof of \eqref{El2} to obtain that
 $$ \|\partial^{i} \Psi \|_{L^\infty} \lesssim 1 + \|H\|_{L^\infty} \ln ( 2 + \|H\|_{H^2}).$$

 For the sake of completeness, let us finally briefly recall the proof of Lemma~\ref{lemmult}.
 \begin{proof}[Proof of Lemma~\ref{lemmult}]
 We have
 $$ m(D) f (x) = \int_{\mathbb{T}} K(x-y) f(y) \, dy$$
 with the kernel $K$ defined by
 $$ K(x) = \sum_{k} m(k) e^{ik x}.$$
 By the Young inequality on convolutions, it suffices to prove that $K \in L^1(\mathbb{T})$
 to obtain the result. Thanks to the Poisson summation formula, we have that
 $$ K(x)= \sum_{n \in \mathbb{Z}} \mathcal{F}^{-1}m(x+ 2 \pi n)$$
 and hence
 $$ \|K \|_{L^1 (\mathbb{T})} \leq\sum_{n} \| \mathcal{F}^{-1} m(\cdot + 2 \pi n) \|_{L^1(\mathbb{T})}
 \leq \| \mathcal{F}^{-1} m \|_{L^1(\mathbb{R})}.$$
\end{proof}

\section{Elliptic estimates on $\tilde \Omega^\varepsilon$}\label{appdx3}

We derive here two propositions concerning divergence free functions on $\tilde \Omega^\varepsilon$, whose proofs are based on a change of variables in order to use the previous proposition on $\mathbb{T}\times \R_{+}$.

Of course $H^2$ is embedded in $L^\infty$, but the goal of the following proposition is to control the $L^\infty$ norm by the log of the $H^2$ norm and the $L^2$ norm of the vorticity.
\begin{proposition}\label{vinfty-vH2} 
 There exists $C$ independent of $\varepsilon$ such that 
 $$
 \|v\|_{L^\infty(\tilde{\Omega}^{\eps})}
 \leq C\Bigg(\eps+ \eps^{1 \over 2 }\|v\|_{L^2(\tilde{\Omega}^{\eps})} + \| \curl v \|_{L^2(\tilde{\Omega}^{\eps})} 
 \Big( \ln( 2 + { 1 \over \eps^2} \|v\|_{H^2(\tilde{\Omega}^{\eps})}\Big)^{1\over 2}
 + \eps^\alpha \|v\|_{L^\infty(\tilde{\Omega}^{\eps})} \ln( 2 + {1\over \eps^2}
 \|v\|_{H^2(\tilde{\Omega}^{\eps})})\Bigg)$$ 
 for any $v \in H^2(\tilde\Omega^\varepsilon)$ such that $\Div v=0$ and $v\cdot n\vert_{\pa \tilde \Omega^\eps}=0$.
\end{proposition}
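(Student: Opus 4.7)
The plan is to reduce the estimate to the flat-strip inequalities of Proposition~\ref{propEl} via a stream-function representation, a flattening of the boundary, and a rescaling of the long periodic direction back to unit period. Since $v$ is divergence-free and tangent to $\pa\tilde \Omega^\eps$, I write $v=\nabla_z^\perp \psi$ with $\psi\vert_{\pa\tilde \Omega^\eps}=0$, so that $-\Delta_z \psi = \omega := \curl_z v$ and $\|v\|_{L^\infty(\tilde \Omega^\eps)}=\|\nabla_z \psi\|_{L^\infty}$. I flatten the boundary through $\tilde z = (z_1, z_2 - \eps^\alpha \eta(z_1))$, which maps $\TT_{1/\eps}\times \R_+$ onto $\tilde \Omega^\eps$ with unit Jacobian, set $\hat \psi(\tilde z)=\psi(\varphi(\tilde z))$ (with $\varphi$ the change of variables), and introduce $\hat v := \nabla_{\tilde z}^\perp \hat\psi = (\tilde v_1, \tilde v_2 - \eps^\alpha \eta' \tilde v_1)$, which is divergence-free in $\tilde z$, tangent to $\{\tilde z_2=0\}$, and pointwise comparable to $\tilde v$ for $\eps$ small. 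A short computation using $\Div_z v=0$ yields the key identity
\begin{equation*}
 -\Delta_{\tilde z}\hat \psi = \tilde \omega + \Div_{\tilde z} H_0, \qquad H_0 := \eps^\alpha \eta'(-\tilde v_1, \tilde v_2),
\end{equation*}
so that the perturbation created by the flattening has pure divergence form with an $L^\infty$-bounded coefficient of size $\eps^\alpha$. Finally, rescaling $y = \eps \tilde z$ and $\Psi(y):=\eps \hat\psi(y/\eps)$ maps everything onto $\TT \times \R_+$ with the norm identity $\|\nabla_y \Psi\|_{L^\infty}=\|\nabla_{\tilde z}\hat\psi\|_{L^\infty}$.

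I will then split $\hat \psi = \bar{\hat\psi}(\tilde z_2) + \hat \psi^*$ into its $\tilde z_1$-mean and zero-mean parts, since Proposition~\ref{propEl} requires sources of zero mean in the periodic variable. The mean part solves $-\pa_{\tilde z_2}^2 \bar{\hat\psi} = \bar{\tilde \omega}$ on $\R_+$ with $\bar{\hat\psi}(0)=0$, and its derivative $\pa_{\tilde z_2}\bar{\hat\psi}=\bar{\tilde v}_1$ is controlled by the one-dimensional Sobolev inequality $\|f\|_{L^\infty(\R_+)}^2 \le 2\|f\|_{L^2}\|f'\|_{L^2}$ combined with the averaging bound $\|\bar g\|_{L^2(\R_+)}\le \eps^{1/2}\|g\|_{L^2(\TT_{1/\eps}\times \R_+)}$ (Cauchy--Schwarz in $\tilde z_1$) and AM--GM, producing the $\eps^{1/2}\|v\|_{L^2}$ term of the target inequality. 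For the zero-mean part I further decompose $\hat\psi^* = \hat\psi_1^* + \hat\psi_2^*$ according to the two sources $\tilde \omega^*$ and $(\Div H_0)^*$, and apply two different estimates from Proposition~\ref{propEl}. To $\Psi_1^*:= \eps \hat\psi_1^*(\cdot/\eps)$ I apply the refined version of \eqref{El3}, $\|\nabla \Psi\|_{L^\infty}\lesssim \|F\|_{L^2}\sqrt{\log(e+\|F\|_{H^1}/\|F\|_{L^2})}$, with $\|F\|_{L^2(\TT\times\R_+)}\le \|\curl v\|_{L^2}$ and $\|F\|_{H^1}\lesssim \eps^{-1}\|v\|_{H^2}$; a case analysis on whether $\|\curl v\|_{L^2} \ge \eps$ or $< \eps$, using the monotonicity of $t\mapsto t\sqrt{\log(A/t)}$ for $t \le A/\sqrt{e}$, yields a contribution of the form $(\|\curl v\|_{L^2}+\eps)\sqrt{\log(2+\eps^{-2}\|v\|_{H^2})}$. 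To $\Psi_2^*$, which solves $\Delta_y \Psi_2^* = \Div_y H_y$ with $H_y(y) = H_0^*(y/\eps)$ satisfying $\|H_y\|_{L^\infty}\lesssim \eps^\alpha \|v\|_{L^\infty}$ and $\|H_y\|_{H^2}\lesssim \eps^{\alpha-1}\|v\|_{H^2}$, I apply \eqref{El4} to obtain the self-absorbing contribution $\eps^\alpha \|v\|_{L^\infty}\log(2+\eps^{-2}\|v\|_{H^2})$. Assembling these estimates with $\|v\|_{L^\infty}\lesssim \|\hat v\|_{L^\infty}\le \|\hat v^*\|_{L^\infty}+\|\bar{\tilde v}_1\|_{L^\infty}$ gives the claimed inequality.

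The main obstacle is to preserve the divergence form of the flattening error when applying the logarithmic estimates. If one instead rewrote $\hat\omega - \tilde \omega$ as an ordinary $L^2$ source and plugged it into \eqref{El3}, the loss of an $\eps^{-1}$ upon differentiation would create a term of order $\eps^{\alpha-1}\|v\|_{H^2}\sqrt{\log(\cdots)}$, which is far too large to close the bootstrap of Proposition~\ref{prop-infty}. Using \eqref{El4} on the divergence structure instead preserves the $L^\infty$-smallness of the coefficient $\eps^\alpha \eta'$ and yields the self-improving combination $\eps^\alpha \|v\|_{L^\infty}\log(\cdots)$ that can be absorbed on the left-hand side in the subsequent applications. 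A secondary technical difficulty is the handling of the $\tilde z_1$-mean of $\hat\psi$, which is genuinely nontrivial (despite $v\cdot n\vert_{\pa\tilde\Omega^\eps}=0$) and requires the independent one-dimensional argument on $\R_+$ described above, the factor $\eps^{1/2}$ in the target inequality arising precisely from the measure of $\TT_{1/\eps}$.
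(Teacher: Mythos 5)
Your argument follows the paper's proof almost step for step: flatten the boundary so that $\hat v=\nabla^\perp_{\tilde z}\hat\psi$ (which is the paper's auxiliary field $u$) is divergence-free and tangent on $\TT_{1/\eps}\times\R_+$; split $\hat\psi$ into its $\tilde z_1$-mean (1D Sobolev giving $\eps^{1/2}\|v\|_{L^2}$, after converting $\|\nabla v\|_{L^2}$ via Lemma~\ref{lem-trace3}) and a zero-mean part; split the zero-mean source as $\tilde\omega$ plus $\Div H_0$ with $|H_0|\lesssim\eps^\alpha|v|$, rescale to $\TT\times\R_+$, and apply \eqref{El3} to the first piece and \eqref{El4} to the second. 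Two cosmetic points: the sign in your identity should read $\Delta_{\tilde z}\hat\psi=\tilde\omega+\Div_{\tilde z}H_0$ for the paper's conventions, and the "refined" quotient-in-the-logarithm form of \eqref{El3} with a case analysis is unnecessary—the additive constant already present in \eqref{El3} rescales to the $\eps$ on the right of the proposition, so the estimate closes directly.
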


\begin{proof} As usual, we set $\omega:= \curl v = \partial_{1}v_{2}-\partial_{2}v_{1}$ and we define the vector field
 \begin{equation}
 \label{defauxu} u(z)= \left( \begin{array}{ll} v_{1}(z_{1} , z_{2}+ \e^\alpha \eta(z_{1})), \\ v_{2}(z_{1}, z_{2} + \e^{\alpha}
 \eta (z_{1})) - \eps^\alpha \eta'(z_{1}) v_{1} (z_{1} , z_{2}+ \e^\alpha \eta(z_{1}))) \end{array} \right)\end{equation}
 which is defined on $\mathbb{T}_{{1 \over \e}} \times \mathbb{R}_{+}.$
 We observe that since $v$ is divergence free with vanishing normal component on the boundary, 
 then we also have that
 $$ \Div u = 0 \text{ in } \mathbb{T}_{{1 \over \e}} \times \mathbb{R}_{+}
\quad \text{and} \quad u_{2}= 0 \text{ on } \mathbb{T}_{{1 \over \e}} \times \{ 0 \}.
 $$
 We can thus introduce a stream function $\phi$ so that
 \begin{equation*}
 u= \nabla^\perp \phi.
 \end{equation*}
 We choose $\phi$ such that
 \begin{equation}
 \label{Phieq}
 \Delta \phi= \omega_{u}:= \partial_{1} u_{2} - \partial_{2} u_{1} \text{ in } \mathbb{T}_{{1 \over \e}} \times \mathbb{R}_{+}
 \end{equation}
 with the Dirichlet boundary condition.
 Note that we get that $\phi_{0}:=\e \int_{z_{1} \in \mathbb{T}_{1\over \e}} \phi$ solves 
\begin{equation}\label{D2phi0}
 \partial_{z_{2}}^2 \phi_{0} = - \e \partial_{2} \int_{z_{1}}u_{1}
\end{equation}
 and thus that
 \begin{equation*}
 \partial_{z_{2}} \phi_{0}= - \e \int_{z_{1}}u_{1}.\end{equation*}
 By setting $\psi = \phi - \phi_{0}$, we get that $ \psi$ solves
 \begin{equation}
 \label{psieq} \Delta \psi= f:= \omega_{u} -\e \int_{z_{1}} \omega_u, \quad \psi_{\vert z_{2}= 0}= 0.
 \end{equation}
 
 Next, we write that
 $$ \|u\|_{L^\infty} \leq \left\|\eps \int_{z_{1}} u_{1} \right\|_{L^\infty_{z_{2}}} +
 \| \nabla \psi \|_{L^\infty}.$$
 By the one-dimensional Sobolev embedding, we obtain that 
\[ \left\|\eps \int_{\mathbb{T}_{1\over \eps}} u_{1} \right\|_{L^\infty_{z_{2}}} \lesssim
 \left\| \eps\int_{\mathbb{T}_{1\over \eps}} u_{1} \right\|_{L^2_{z_{2}}}
 + \left\|\eps \int_{\mathbb{T}_{1\over \eps}} \partial_{2} u_{1} \right\|_{L^2_{z_{2}}}
 \leq \eps^{1 \over 2} \Big(
 \| u_{1}\|_{L^2} + \| \nabla u_{1} \|_{L^2}\Big).
 \]
 Moreover, thanks to \eqref{defauxu}, we have that
 $$ \|u_{1}\|_{L^2} \lesssim \|v_{1}\|_{L^2}, \quad \| \nabla u_{1} \|_{L^2} \lesssim \|\nabla v_{1}\|_{L^2}$$
 which implies
 $$ \left\|\eps \int_{\mathbb{T}_{1\over \eps}} u_{1} \right\|_{L^\infty_{z_{2}}} \lesssim \eps^{1 \over 2}
 \|v\|_{L^2} + \eps^{1 \over 2} \|\nabla v \|_{L^2}.$$
 Hence, Lemma~\ref{lem-trace3} gives
 \begin{equation}
 \label{vinfty1}
 \| v\|_{L^\infty} \lesssim \|u\|_{L^\infty} \lesssim \eps^{1 \over 2 }\|v\|_{L^2} + \eps^{1 \over 2} \| \omega \|_{L^2}
 + \| \nabla \psi \|_{L^\infty} .
 \end{equation}
It remains to estimate $\| \nabla \psi \|_{L^\infty(\mathbb{T}_{1 \over \eps} \times \mathbb{R}_{+})}$
 where $\psi$ solves \eqref{psieq}.
 Thanks to \eqref{defauxu}, we observe that
 $$ \omega_{u}= \omega \circ X + \partial_{2}\left( \eps^\alpha \eta' v_{2} \circ X\right) - \partial_{1}\left( \eps^\alpha \eta' v_{1} \circ X\right)$$
 where $X(z_{1}, z_{2})= (z_{1}, z_{2} + \eps^\alpha \eta'(z_{1})).$
 We thus split $\psi = \psi_{1} + \psi_{2}$ where $\psi_{1}$ solves in $\mathbb{T}_{1\over \eps}\times \mathbb{R}_{+}$
 $$ \Delta \psi_{1}= f := \omega \circ X - \langle \omega \circ X\rangle_{\eps}, \quad \psi_{1}(z_{1}, 0)= 0, $$
 and $\psi_{2}$ solves
 $$ \Delta \psi_{2}= \Div h,\quad \psi_{2}(z_{1}, 0)= 0, $$ 
 where we have set
 $$ h= \left( - \eps^\alpha \eta' v_{1} \circ X, \eps^\alpha \eta' v_{2} \circ X\right) - \langle \left( - \eps^\alpha \eta' v_{1} \circ X, \eps^\alpha \eta' v_{2} \circ X \right) \rangle_{\eps}$$
 and for a function $f$ defined on $\mathbb{T}_{1\over \eps}\times \mathbb{R}_{+}$, we use the notation
 $\langle f \rangle_{\eps}= \eps \int_{\mathbb{T}_{1\over \eps}} f(z_{1}, z_{2})\, dz_{1}.$ 
 To estimate $\psi_{1}$, we set
 $$ \Psi_{1}(x_{1}, x_{2}) = \psi_{1}({x_{1} \over \eps}, {x_{2} \over \eps}), \quad F(x_{1}, x_{2})= {1 \over \eps^2} f({x_{1}
 \over \eps}, {x_{2}\over \eps})$$
 so that $\Psi_{1}$ and $F$ are defined on $\mathbb{T}\times \mathbb{R}_{+}$ and solve
 $$ \Delta \Psi_{1}= F.$$
 By using the estimate \eqref{El3} of Proposition~\ref{propEl}, we get that
 $$ \| \nabla \Psi_{1}\|_{L^\infty(\mathbb{T}\times \mathbb{R}_{+})} \lesssim 1 + \| F\|_{L^2(\mathbb{T} \times \mathbb{R}_{+})}
 \left( \ln( 2 + \| F\|_{H^1(\mathbb{T}\times \mathbb{R}_{+})}) \right)^{1 \over 2}.$$ 
 In this original coordinates, this yields
 \begin{align}
\nonumber \|\nabla \psi_{1} \|_{L^\infty( \mathbb{T}_{1 \over \eps} \times \mathbb{R}_{+})}
 & \lesssim \eps + \| f \|_{L^2( \mathbb{T}_{1 \over \eps} \times \mathbb{R}_{+})}
 \left( \ln( 2 + { 1 \over \eps^2} \|f\|_{H^1{( \mathbb{T}_{1 \over \eps} \times \mathbb{R}_{+})}})\right)^{1\over 2} \\
\label{vinfty2} & \lesssim \eps + \| \omega \|_{L^2( \tilde{\Omega}^{\eps})}
 \left( \ln( 2 + { 1 \over \eps^2} \|v\|_{H^2(\tilde{\Omega}^{\eps})}\right)^{1\over 2} .
 \end{align}
 We shall now estimate $\psi_{2}$. We use the change of variables, 
 $$ \Psi_{2}(x_{1}, x_{2}) = \psi_{2}({x_{1} \over \eps}, {x_{2} \over \eps}), \quad H(x_{1}, x_{2})= {1\over \eps} h({x_{1}\over \eps}, 
 {x_{2}\over \eps}) $$
 so that again $\Psi_{2}$ and $H$ are defined on $\mathbb{T}\times \mathbb{R}_{+}$ and solve
 $$ \Delta \Psi_{2}= \Div H.$$
 By using the estimate \eqref{El4} of Proposition~\ref{propEl}, we obtain that
 $$ \| \nabla \Psi_{2} \|_{L^\infty(\mathbb{T}\times \mathbb{R}_{+})}
 \lesssim 1 + \| H\|_{L^\infty(\mathbb{T}\times \mathbb{R}_{+})} \ln (2 + \|H\|_{H^2(\mathbb{T}\times \mathbb{R}_{+})})$$
 which gives in the original coordinates
 \begin{align}
\nonumber \| \nabla \psi_{2} \|_{L^\infty( \mathbb{T}_{1 \over \eps} \times \mathbb{R}_{+})}
 & \lesssim \eps + \|h\|_{L^\infty( \mathbb{T}_{1 \over \eps} \times \mathbb{R}_{+})}
 \ln(2 + { 1 \over \eps^2}\| h\|_{H^2( \mathbb{T}_{1 \over \eps} \times \mathbb{R}_{+})} )\\
 \label{vinfty3} & \lesssim \eps + \eps^\alpha \| v\|_{L^\infty(\tilde{ \Omega}^{\eps})} \ln( 2 + { 1 \over \eps^2}
 \|v\|_{H^2(\tilde{\Omega}^{\eps})}).
 \end{align}
 Consequently, by combining \eqref{vinfty1}, \eqref{vinfty2}, \eqref{vinfty3}, we obtain that
 $$ \|v\|_{L^\infty(\tilde{\Omega}^{\eps})}
 \lesssim \eps +\eps^{1 \over 2 }\|v\|_{L^2} + \| \omega \|_{L^2(\tilde{\Omega}^{\eps})} 
 \left( \ln( 2 + { 1 \over \eps^2} \|v\|_{H^2(\tilde{\Omega}^{\eps})}\right)^{1\over 2}
 + \eps^\alpha \|v\|_{L^\infty(\tilde{\Omega}^{\eps})} \ln( 2 + {1\over \eps^2}
 \|v\|_{H^2(\tilde{\Omega}^{\eps})}),$$ 
 which ends the proof.
\end{proof}

In the second proposition, we finally control the quantities appearing in the assumption \eqref{major_hyp}.
\begin{proposition}
\label{prop-omegav}
There exist $C$ and $\varepsilon_{0}$ such that 
 \begin{gather*}
\text{ for all $k \le 3$,} \: \| v \|_{H^k(\tilde\Omega^{\e})}
 \leq C\Big( \|\curl v\|_{H^{k-1}(\tilde\Omega^{\e})} + \|v\|_{L^2(\tilde \Omega^{\e})}\Big) , \\
 \| \nabla v \|_{L^\infty (\tilde \Omega^\e)} \leq C\Big( \e^2 + (\| \curl v \|_{L^\infty(\tilde \Omega^{\e})} + \e^\alpha\| v \|_{W^{1,\infty}(\tilde \Omega^{\e})}) \ln\Big (2 + \e^{-3}\| v\|_{H^3(\tilde\Omega^{\e})} \Big) \Big), 
\end{gather*}
for all $\varepsilon\leq\varepsilon_{0}$ and for any $v \in H^2(\tilde\Omega^\varepsilon)$ such that $\Div v=0$ and $v\cdot n\vert_{\pa \tilde \Omega^\eps}=0$.
\end{proposition}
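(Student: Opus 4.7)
My plan is to mimic the strategy of Proposition~\ref{vinfty-vH2}: flatten the boundary, introduce a stream function, split it into mean and zero-mean parts in $z_1$, and reduce the two estimates to the flat-domain elliptic estimates of Proposition~\ref{propEl}. I first set $X(z):=(z_1, z_2+\varepsilon^\alpha\eta(z_1))$ and
\[
u(z):=\bigl(v_1(X(z)),\ v_2(X(z))-\varepsilon^\alpha\eta'(z_1)v_1(X(z))\bigr),
\]
so that $u$ is divergence-free on $\mathbb{T}_{1/\varepsilon}\times\mathbb{R}_+$ with $u_2|_{z_2=0}=0$. Since $\varepsilon^\alpha\eta$ and all its derivatives are $O(\varepsilon^\alpha)$, the change of variables is a uniform $H^k$-isomorphism: $\|v\|_{H^k(\tilde\Omega^\varepsilon)}\simeq\|u\|_{H^k(\mathbb{T}_{1/\varepsilon}\times\mathbb{R}_+)}$ with universal constants. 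A direct chain-rule computation using $\Div v=0$ gives
\[
\omega_u=\omega_v\circ X-2\varepsilon^\alpha\eta'(\pa_1 v_1)\circ X-\varepsilon^\alpha\eta''v_1\circ X-\varepsilon^{2\alpha}(\eta')^2(\pa_2 v_1)\circ X,
\]
so $\|\omega_u\|_{H^{k-1}}\lesssim\|\omega_v\|_{H^{k-1}}+\varepsilon^\alpha\|v\|_{H^k}$ and $\|\omega_u\|_{L^\infty}\lesssim\|\omega_v\|_{L^\infty}+\varepsilon^\alpha\|v\|_{W^{1,\infty}}$. I then write $u=\nabla^\perp\phi$ with $\phi(z_1,0)=0$ and split $\phi=\phi_0(z_2)+\psi$, where $\phi_0=\varepsilon\int_{\mathbb{T}_{1/\varepsilon}}\phi\,dz_1$ is the $z_1$-mean. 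Then $\pa_{z_2}^{j+1}\phi_0=\varepsilon\int\pa_{z_2}^{j-1}\omega_u\,dz_1$, and $\psi$ solves $\Delta\psi=f:=\omega_u-\langle\omega_u\rangle_\varepsilon$ on $\mathbb{T}_{1/\varepsilon}\times\mathbb{R}_+$ with homogeneous Dirichlet data and zero $z_1$-mean.

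For the first estimate, I bound $\|D^j u\|_{L^2}\lesssim\|D^{j+1}\phi\|_{L^2}$ and treat the two parts of $\phi$ separately. Cauchy--Schwarz on $\mathbb{T}_{1/\varepsilon}$ yields $\|\pa_{z_2}^{j+1}\phi_0\|_{L^2(\mathbb{T}_{1/\varepsilon}\times\mathbb{R}_+)}\lesssim\|\omega_u\|_{H^{j-1}}$. For the zero-mean part I rescale by $\Psi(x):=\psi(x/\varepsilon)$ and $F(x):=\varepsilon^{-2}f(x/\varepsilon)$, which yields $\Delta\Psi=F$ on $\mathbb{T}\times\mathbb{R}_+$ with $\int_{\mathbb{T}}F\,dx_1=0$ (using that $1/\varepsilon\in\mathbb{N}$). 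Applying \eqref{El1} at level $s=j$ and tracking the scalings $\|D^\alpha\Psi\|_{L^2(\mathbb{T}\times\mathbb{R}_+)}=\varepsilon^{1-|\alpha|}\|D^\alpha\psi\|_{L^2(\mathbb{T}_{1/\varepsilon}\times\mathbb{R}_+)}$ and $\|D^\beta F\|_{L^2(\mathbb{T}\times\mathbb{R}_+)}=\varepsilon^{-1-|\beta|}\|D^\beta f\|_{L^2}$, the powers of $\varepsilon$ cancel exactly at the top derivative and give $\|D^{j+1}\psi\|_{L^2(\mathbb{T}_{1/\varepsilon}\times\mathbb{R}_+)}\lesssim\|f\|_{H^{j-1}}\lesssim\|\omega_u\|_{H^{j-1}}$ with a constant independent of $\varepsilon$. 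Summing over $j=1,\dots,k$ and using $\|u\|_{L^2}\lesssim\|v\|_{L^2}$ produces
\[
\|v\|_{H^k(\tilde\Omega^\varepsilon)}\lesssim\|v\|_{L^2}+\|\omega_v\|_{H^{k-1}}+\varepsilon^\alpha\|v\|_{H^k(\tilde\Omega^\varepsilon)},
\]
and for $\varepsilon\le\varepsilon_0$ small the last term absorbs into the left-hand side.

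For the $L^\infty$ estimate, I apply \eqref{El2} to $\Psi$: $\|D^2\Psi\|_{L^\infty}\lesssim 1+\|F\|_{L^\infty}\ln(2+\|F\|_{H^2})$. The scalings $\|F\|_{L^\infty}=\varepsilon^{-2}\|f\|_{L^\infty}$ and $\|F\|_{H^2(\mathbb{T}\times\mathbb{R}_+)}\lesssim\varepsilon^{-3}\|f\|_{H^2(\mathbb{T}_{1/\varepsilon}\times\mathbb{R}_+)}$, combined with $\|D^2\psi\|_{L^\infty}=\varepsilon^2\|D^2\Psi\|_{L^\infty}$, give
\[
\|D^2\psi\|_{L^\infty}\lesssim\varepsilon^2+\|\omega_u\|_{L^\infty}\ln\bigl(2+\varepsilon^{-3}\|\omega_u\|_{H^2}\bigr),
\]
while the mean part only contributes $\|\pa_{z_2}^2\phi_0\|_{L^\infty}\leq\|\omega_u\|_{L^\infty}$, which is subsumed by the logarithmic term. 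Using $\|\nabla u\|_{L^\infty}\lesssim\|D^2\phi\|_{L^\infty}$, inserting $\|\omega_u\|_{L^\infty}\lesssim\|\omega_v\|_{L^\infty}+\varepsilon^\alpha\|v\|_{W^{1,\infty}}$ and $\|\omega_u\|_{H^2}\lesssim\|v\|_{H^3}$, and undoing the change of variables yields exactly the second estimate of the proposition.

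The main technical obstacle is the absorption step in the first estimate: the flattening introduces an error $\sim\varepsilon^\alpha\|v\|_{H^k}$ on the right-hand side, which can only be moved to the left once $\varepsilon^\alpha$ is smaller than a fixed absolute constant determined by $\eta$; this is precisely what fixes $\varepsilon_0$. A secondary subtlety is the precise bookkeeping of powers of $\varepsilon$ in the rescaling $z\mapsto\varepsilon z$: in the $L^2$ elliptic estimate they cancel exactly (yielding $\varepsilon$-independent constants), whereas in the $L^\infty$ estimate the leftover factors produce exactly the $\varepsilon^2$ term and the $\varepsilon^{-3}$ inside the logarithm appearing in the statement.
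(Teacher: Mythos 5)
Your proposal is correct and follows essentially the same route as the paper's own proof: the same flattening map and auxiliary field $u$, the same split of the stream function into a $z_1$-mean part $\phi_0$ and a zero-mean remainder $\psi$, the same rescaling to $\mathbb{T}\times\mathbb{R}_+$ with the elliptic estimates \eqref{El1} and \eqref{El2}, and the same absorption of the $\varepsilon^\alpha\|v\|_{H^k}$ error that fixes $\varepsilon_0$. Your explicit chain-rule formula for $\omega_u$ (using $\Div v=0$) agrees with the paper's expression $\omega_u=\omega\circ X+\partial_2(\varepsilon^\alpha\eta' v_2\circ X)-\partial_1(\varepsilon^\alpha\eta' v_1\circ X)$, and the $\varepsilon$-bookkeeping in both the $L^2$ cancellation and the $(\varepsilon^2,\varepsilon^{-3})$ factors in the log estimate is accurate.
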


 \begin{proof} We use the same notations as in the proof of the previous proposition. 
 To use the estimates in $\mathbb{T}\times \R_{+}$ , we change again the variables 
 \[
 \Psi(x_{1}, x_{2})= \psi({x_{1} \over \e},{ x_{2} \over \e})
\quad\text{and}\quad F (x_{1}, x_{2})= {1\over \e^2}f({x_{1} \over \e},{ x_{2} \over \e}),
\]
so that $\Psi$ solves
 \begin{equation} \label{eqPsi}
\left\{ \begin{array}{ll}
 \Delta \Psi= F &\text{in }\mathbb{T}\times \R_{+},\\
 \Psi= 0 &\text{on }\mathbb{T}\times \{0\},
 \end{array}\right.
 \end{equation}
with $F$ such that $\int_{x_{1}} F= 0$.
 In particular, for $k \le 3,$ \eqref{El1} gives
\[
 \|D^{k} \nabla \Psi \|_{L^2(\mathbb{T} \times \mathbb{R})} \leq \|\nabla \Psi \|_{H^{k}(\mathbb{T} \times \mathbb{R})} \lesssim \|F\|_{H^{k-1}(\mathbb{T} \times \mathbb{R})}
\]
 and thus 
 $$ 
 \|D^{k}\nabla \psi \|_{L^2(\mathbb{T}_{1 \over \e} \times \mathbb{R}_{+})}=\varepsilon^k \|D^{k}\nabla \Psi \|_{L^2(\mathbb{T} \times \mathbb{R}_{+})} \lesssim \varepsilon^k \|F\|_{H^{k-1}(\mathbb{T} \times \mathbb{R})} \leq \|f\|_{H^{k-1}(\mathbb{T}_{1 \over \e} \times \mathbb{R}_{+})}.
 $$
 Hence, for $k \le 2$
 \[
 \|D \nabla \psi \|_{H^k(\mathbb{T}_{1 \over \e} \times \mathbb{R}_{+})} \lesssim \| \omega_{u}\|_{H^{k}(\mathbb{T}_{1 \over \e} \times \mathbb{R}_{+})} .
 \]
 Coming back to $u=\nabla^{\perp}\psi+ \nabla^{\perp}\phi_{0}$ and thanks to \eqref{D2phi0}, we write
\begin{align*}
 \| D u \|_{H^k(\mathbb{T}_{1 \over \e} \times \mathbb{R}_{+})} &\lesssim \|D \nabla \psi \|_{H^k(\mathbb{T}_{1 \over \e} \times \mathbb{R}_{+})}
 +\|D^2_{z_{2}} \phi_{0} \|_{H^k(\mathbb{T}_{1 \over \e} \times \mathbb{R}_{+})}\\
 &\lesssim \|\omega_{u}\|_{H^k(\mathbb{T}_{1 \over \e} \times \mathbb{R}_{+})}
 + \varepsilon^{1/2}\| \int_{z_{1}\in \mathbb{T}_{1 \over \e}} \omega_{u} \|_{H^k_{z_{2}}(\mathbb{R}_{+})} \\
 &\lesssim \|\omega_{u}\|_{H^k(\mathbb{T}_{1 \over \e} \times \mathbb{R}_{+})}\lesssim \| \omega \|_{H^k(\tilde \Omega^{\e})} + \e^{\alpha} \|v\|_{H^{k+1}(\tilde \Omega^{\e})},
 \end{align*}
 where we have used the definition of $u$ and $\omega_{u}$ \eqref{defauxu}--\eqref{Phieq}.
 Next, we observe again from the definition of $u$ \eqref{defauxu} that for all $k \le 2$, 
 $$ \|D v \|_{H^k(\tilde \Omega^{\e})} \lesssim \| D u \|_{H^k(\mathbb{T}_{1 \over \e} \times \mathbb{R}_{+})} + \e^\alpha \| v \|_{H^{k+1}(\tilde \Omega^{\e})} $$
 which allows us to deduce the existence of $C>0$ independent of $v$ and $\varepsilon$ such that
 \[
 \| v \|_{H^{k+1}(\tilde \Omega^{\e})} \leq 2C \Big(\| v \|_{L^2(\tilde \Omega^{\e})} + \| \omega \|_{H^k(\tilde \Omega^{\e})} + \e^{\alpha} \|v\|_{H^{k+1}(\tilde \Omega^{\e})}\Big).
 \]
 Setting $\varepsilon_{0}$ such that $2C\varepsilon_{0}^\alpha \leq 1/2$ gives the first estimate of Proposition~\ref{prop-omegav}.
 
To prove the second estimate, we use the estimate \eqref{El2} of Proposition~\ref{propEl}.
 In the original coordinates, this gives
 $$ \|D^2 \psi\|_{L^\infty(\mathbb{T}_{{1 \over \e}}\times \mathbb{R}_{+})}
 \lesssim \e^2 + \|f\|_{L^\infty(\mathbb{T}_{{1 \over \e}}\times \mathbb{R}_{+})}
 \ln(2 + { 1 \over \eps^3} \|f\|_{H^2({\mathbb{T} \over \e}\times \mathbb{R}_{+})}
 ).$$
 Thanks to \eqref{D2phi0}, we also have
 that 
 $$ \|D^2_{z_{2}} \phi_{0}\|_{L^\infty(\mathbb{R}_{+})} = \Big\| \e \int_{z_{1}\in \mathbb{T}_{1\over \e}} \omega_u \Big\|
 _{L^\infty_{z_{2}}} \leq \|\omega_{u}\|_{L^\infty(\mathbb{T}_{{1 \over \e}}\times \mathbb{R}_{+})}.$$ 
 Therefore, we actually obtain that
 $$ \|\nabla u \|_{L^\infty(\mathbb{T}_{{1 \over \e}}\times \mathbb{R}_{+})}
 \lesssim \e^2 + \|\omega_{u}\|_{L^\infty(\mathbb{T}_{{1 \over \e}}\times \mathbb{R}_{+})}
 \ln (2 +{ 1 \over \eps^3}\|\omega_{u}\|_{H^2({\mathbb{T}_{1 \over \e}\times \mathbb{R}_{+})} } )
 .$$
 To conclude, we use that
 \begin{align*}
& \|\nabla v \|_{L^\infty(\tilde \Omega^{\e})} \lesssim \|\nabla u \|_{L^\infty(\mathbb{T}_{{1 \over \e}}\times \mathbb{R}_{+})} + \e^\alpha \|v\|_{L^\infty(\tilde \Omega^{\e})} \\
 & \|\omega_{u}\|_{L^\infty(\mathbb{T}_{{1 \over \e}}\times \mathbb{R}_{+})}\lesssim \|\omega \|_{L^\infty(\tilde \Omega^{\e})} + \e^{\alpha} \|\nabla v \|_{L^\infty(\tilde \Omega^{\e})}+ \e^{\alpha} \| v \|_{L^\infty(\tilde \Omega^{\e})} \\
 & \|\omega_{u}\|_{H^2(\mathbb{T}_{{1 \over \e}}\times \mathbb{R}_{+})}\lesssim \|v\|_{H^3(\tilde \Omega^{\e})},
 \end{align*}
 to deduce the existence of $C$ independent of $v$ and $\varepsilon$ such that
 \[
 \|\nabla v \|_{L^\infty(\tilde \Omega^{\e})} \leq C \Big( \e^2 +(\|\omega \|_{L^\infty(\tilde \Omega^{\e})} + \e^{\alpha} \| v \|_{W^{1,\infty}(\tilde \Omega^{\e})}) \ln (2 +{ 1 \over \eps^3}\|v\|_{H^3(\tilde \Omega^{\e})} ) \Big),
 \]
 which ends the proof.
 \end{proof}
 
\def\cprime{$'$}

\end{document}